\newcommand*{\mailto}[1]{\href{mailto:#1}{\nolinkurl{#1}}}
\numberwithin{equation}{section}
\newtheorem{example}{Example}[section]
\newtheorem{definition}[example]{Definition}
\newtheorem{theorem}[example]{Theorem}
 \newtheorem{proposition}{Proposition}
\newtheorem{lemma}{Lemma}
\newtheorem{remark}[example]{Remark}
\newtheorem*{maintheorem*}{Main Theorem}
\numberwithin{equation}{section}
\renewcommand{\i}{\ifmmode\mathit{\mathchar"7010 }\else\char"10 \fi}
\renewcommand{\j}{\ifmmode\mathit{\mathchar"7011 }\else\char"11 \fi}
\newcommand{\R}{\mathbb{R}}
\newcommand{\px}{\partial_x}
\newcommand{\pt}{\partial_t}
\newcommand{\pxi}{\partial_\xi}
\newcommand{\peta}{\partial_\eta}
\newcommand{\ve}{\varepsilon}
\begin{document}

\title[Semigroup of conservative solutions of the two-component equation]
{Global semigroup of conservative weak 
solutions of the two-component Novikov equation}

 \author[Karlsen]{K. H. Karlsen}
\author[Rybalko]{Ya. Rybalko}

\address[Kenneth Hvistendahl Karlsen]{\newline
  Department of Mathematics, \newline University of Oslo, \newline
  PO Box 1053, Blindern -- 0316 Oslo, Norway}
\email[]{kennethk@math.uio.no}

\address[Yan Rybalko]{\newline
	Department of Mathematics, \newline University of Oslo, \newline
	PO Box 1053, Blindern -- 0316 Oslo, Norway
	\medskip
	\newline
	Mathematical Division, 
	\newline B.Verkin Institute for Low Temperature Physics and Engineering
	of the National Academy of Sciences of Ukraine,
	\newline 47 Nauky Ave., Kharkiv, 61103, Ukraine}
\email[]{rybalkoyan@gmail.com}

\subjclass[2020]{Primary: 35G25, 35B30; Secondary: 35Q53, 37K10}

\keywords{Novikov equation, two-component peakon equation,  
nonlocal (Alice-Bob) integrable system,
cubic nonlinearity, global well-posedness}

\thanks{We deeply appreciate the insightful and constructive feedback provided by an anonymous referee, which has significantly enhanced the clarity and overall quality of our paper.
The work of Yan Rybalko was partially supported 
by the N.I. Akhiezer Foundation and
the European Union’s Horizon Europe research and innovation 
programme under the Marie Sk\l{}odowska-Curie grant No 101058830.
The work of Kenneth H. Karlsen was partially funded by the 
Research Council of Norway under project 351123 (NASTRAN)}

\date{\today}

\begin{abstract}
We study the Cauchy problem for the two-component Novikov system 
with initial data $u_0, v_0$ in $H^1(\mathbb{R})$ such that the product 
$(\partial_x u_0)\partial_x v_0$ belongs to $L^2(\mathbb{R})$. 
We construct a global semigroup of conservative weak solutions. 
We also discuss the potential concentration phenomena of 
$(\partial_x u)^2dx$, $(\partial_x v)^2dx$, and 
$\left((\partial_x u)^2(\partial_x v)^2\right)dx$, which contribute 
to wave-breaking and may occur for a set of time 
with nonzero measure. Finally, we establish the 
continuity of the data-to-solution 
map in the uniform norm.
\end{abstract}

\maketitle

\numberwithin{lemma}{section}
\numberwithin{proposition}{section}

\section{Introduction}

In this paper we consider the following 
two-component Novikov system \cite{L19}:
\begin{align}
	\nonumber
	\label{t-c-N}
	&\partial_tm+\px(uvm)+(\px u)vm=0,
	&& m=m(t,x),\,u=u(t,x),\,v=v(t,x),\\
	&\partial_tn+\px(uvn)+u(\px v)n=0,
	&& n=n(t,x),\\
	\nonumber
	&m=u-\partial_{x}^2u,\,\,n=v-\partial_{x}^2v,
	&& u,v\in\mathbb{R},\quad
	t,x\in\mathbb{R},
\end{align}
where we extend the temporal variable $t$ over the real line 
$\mathbb{R}$ to facilitate the two-place reduction 
$v(t, x) = u(-t, -x)$ of \eqref{t-c-N}, which inherently involves 
both $t$ and $-t$ (see \eqref{nN} below). Further details 
regarding two-place systems 
are provided later in the introduction.

We assume that the initial data $(u,v)(0,x)=(u_0,v_0)$ 
for the system \eqref{t-c-N} belongs to the metric space 
$\left(\Sigma, d_\Sigma\right)$, where
\begin{equation}\label{Sig}
	\Sigma=
	\left\{
	(f,g): f,g\in H^1(\mathbb{R})\mbox{ and }
	\int_{-\infty}^{\infty}
	(\px f)^2(\px g)^2\,dx<\infty
	\right\},
\end{equation}
and
\begin{equation}\label{d^2}
	d^2_\Sigma((f_1,g_1), (f_2,g_2))
	=\|f_1-f_2\|^2_{H^1}+\|g_1-g_2\|^2_{H^1}
	+\|(\px f_1)\px g_1-(\px f_2)\px g_2\|^2_{L^2}.
\end{equation}
We have
\begin{equation}\label{eq:inclusion-intro}
	\left(H^1\cap W^{1,4}\right)\times
	\left(H^1\cap W^{1,4}\right)\subset\Sigma,
\end{equation}
and the function $d_\Sigma(\cdot,\cdot)$ satisfies 
the axioms of a metric. Note that $\Sigma$ is not 
a linear space, but $\left(\Sigma, d_\Sigma\right)$ is 
a complete metric space (see Proposition \ref{Sm}).

When setting $v = u$, the two-component 
system \eqref{t-c-N} reduces to the 
scalar Novikov equation:
\begin{equation}\label{N}
	\partial_tm+\px(u^2m)+(\px u)um=0.
\end{equation}
This equation was derived using the perturbative 
symmetry approach \cite{N09} applied to 
the generalized Camassa-Holm equation:
$$
(1-\px^2)\pt u=F(u,\px u, \px^2u,\px^3 u,\dots),
\quad u=u(t,x),
$$
which is known for possessing an infinite 
hierarchy of higher symmetries. It was also 
formulated as a shallow-water wave model under 
a suitable approximation of the Euler equation 
for an incompressible fluid \cite{CHL22}. 
The Novikov equation \eqref{N} is recognized 
as an integrable system. It features a Lax pair, 
a bi-Hamiltonian structure \cite{HW08,N09}, 
and supports both peakon \cite{HLS09} and smooth 
soliton solutions \cite{M13}. Moreover, it can be linearized 
through the inverse scattering transform method \cite{BSZ16}. 
For an in-depth discussion of the stability of 
peakons under small perturbations, we 
direct the reader to \cite{CDL23,CP21}.

The solution to the Cauchy problem for the 
Novikov equation can develop finite-time singularities in 
the form of wave breaking \cite{JN12} (cf.~blow-up phenomena 
for the Camassa-Holm equation \cite{CE98}). Under additional assumptions 
on the sign of the initial data $u_0$ and its derivatives, unique global 
weak solutions in the Sobolev spaces $H^1 \cap W^{1,\infty}$ and 
$H^1 \cap W^{1,4}$ were established in \cite{WG16} and \cite{L13, ZYG23}, 
respectively. By applying the method of characteristics proposed 
by Bressan and Constantin \cite{BC07} (see also 
\cite{BCZ15, BC07d}), the authors in \cite{CCL18} obtained 
a unique global conservative solution for the Novikov equation, 
potentially featuring finite-time singularities, for general initial data 
in $H^1 \cap W^{1,4}$. A Lipschitz metric in this space was 
constructed in \cite{CCCS18}. Additionally, a global semigroup 
of dissipative solutions of \eqref{N} in $H^1 \cap W^{1,4}$ 
was derived in \cite{ZYM18}.

Another notable reduction of \eqref{t-c-N} can 
be derived by setting $v(t, x) = u(-t, -x)$, leading to 
the following two-place (nonlocal) 
form of the Novikov equation:
\begin{equation}\label{nN}
	\partial_tm(t,x)
	+\px(u(t,x)u(-t,-x)m(t,x))
	+(\px u(t,x))u(-t,-x)m(t,x)=0,\quad
	t,x\in\mathbb{R}.
\end{equation}
Ablowitz and Musslimani introduced a class of integrable 
two-place equations in \cite{AM13}. They defined a \textit{nonlocal}
nonlinear Schr\"odinger (NNLS) equation, which
 is a reduction of the Ablowitz-Kaup-Newell-Segur system:
\begin{equation}
	\label{NNLS}
	\mathrm{i}\partial_{t}q(t,x)+\partial_{x}^2q(t,x)
	+2\sigma q^{2}(t,x)\overline{q}(t,-x)=0,
	\quad 
	\mathrm{i}^2=-1,\quad\sigma=\pm1,
\end{equation}
where $\overline{q}$ is a complex conjugate of $q$
(notice that \eqref{NNLS} has a nonlocality with 
respect to $x$ only, cf.\,\,\eqref{nN}).
Equation \eqref{NNLS} with $\sigma=1$ has a 
one-soliton solution \cite{AM13}
\begin{equation*}
	q_{\alpha}(t,x)=
	\frac{3\alpha}
	{e^{-4\mathrm{i}\alpha^2t
	+2\alpha x}
	+e^{-\mathrm{i}\alpha^2t
		-\alpha x}},
	\quad\alpha>0,
	\quad (t,x)\neq\left(
	\frac{(2n+1)\pi}{3\alpha^2},0
	\right),\,n\in\mathbb{Z},
\end{equation*}
which blows-up in finite time in the $L^\infty$ norm.
The global well-posedness questions of the NNLS 
equation were investigated in \cite{CLW13, RS23, ZF24}.

Apart from being integrable, the NNLS equation 
satisfies a parity-time (PT) symmetry property, that is, $q(t,x)$ 
and $\overline{q}(-t,-x)$ satisfy \eqref{NNLS} simultaneously.
Notice that two-place Novikov equation \eqref{nN}, proposed here, 
is also PT-symmetric: both $u(t,x)$ and $u(-t,-x)$ 
are solutions of \eqref{nN}.
Since the dynamics of two-place equations depends on the 
values of the solution at non-adjacent states, such equations 
can be useful for describing phenomena having strong correlations 
and entanglement of the events at different places \cite{LH17}.
For the other examples of the nonlocal/two-place equations 
we refer to, e.g., \cite{AM17, F16, LQ17}.

For the two-component Novikov system, \cite{MM20} 
established local well-posedness of \eqref{t-c-N} in certain Besov 
spaces (see also \cite{ZQ21}) and showed that blow-up occurs 
only through wave breaking. Multipeakons were studied via 
inverse spectral methods in \cite{CS23}, while the stability 
of one-peakon solutions was addressed in \cite{HLQ23, HLLQ24}. 
A unique global weak solution of \eqref{t-c-N} was 
obtained in \cite{HQ20} under sign assumptions 
on the initial data $m_0$ and $n_0$.

Using the method of characteristics and adapting 
the approach of Bressan and Constantin \cite{BC07}, 
the work \cite{HQ21} constructs a global conservative 
weak solution $(u,v)(t,\cdot)\in\Sigma$ 
(see \eqref{Sig}) of \eqref{t-c-N}, subject to the initial 
data $(u_0,v_0)\in\left(H^1\cap W^{1,4}\right)\times\left(H^1
\cap W^{1,4}\right)$, without additional sign assumptions 
on the initial data \cite[Theorem 1.1]{HQ21}; see also \cite{HQ20}. 
Moreover, it is shown that the components $u$ and $v$ of the 
obtained solution are H\"older continuous in $x$ and $t$ with 
exponent $1/2$, and that the measure 
$\left((\px u)^2(\px v)^2\right)dx$ can concentrate 
in finite time \cite[Theorem 1.1]{HQ21}.

Recalling \eqref{eq:inclusion-intro}, it follows that the 
global solutions constructed in \cite{HQ21} 
generally fail to preserve regularity. Consequently, establishing 
a semigroup of such solutions is unattainable within this framework. 
The main objective of the present work is to address this 
limitation by constructing a global semigroup of conservative 
weak solutions to \eqref{t-c-N} in the metric 
space $(\Sigma, d_{\Sigma})$ defined by 
\eqref{Sig} and \eqref{d^2}.

To achieve this, we revisit the method of 
characteristics for the two-component Novikov equation, originally 
developed in \cite{HQ21}, extending its applicability to 
general initial data $(u_0, v_0) \in \Sigma$. Although 
$\Sigma$ is not a vector space, we demonstrate that the 
associated semilinear ODE system, described in \eqref{ODE}--\eqref{id} 
below, can still be formulated in the same Banach space $E$ 
as in \cite{HQ21}; see Section \ref{Gsl}. Following similar 
arguments as those in \cite[Sections III, IV]{HQ21}, we establish 
the global well-posedness of the ODE system 
and thereby construct a global conservative 
weak solution of \eqref{t-c-N} in $\Sigma$.


To construct a semigroup of global solutions (see Section \ref{sgs}), 
we introduce the positive Radon measure $\mu_{(t)}$, 
whose absolutely continuous part is given by
$$
d\mu_{(t)}^{ac} = \left(
(\px u)^2 + (\px v)^2 + (\px u)^2 (\px v)^2\right) dx.
$$
This measure encodes the energy distribution of 
the solution $(u,v)$, particularly at instances of 
wave-breaking. It allows us to distinguish between 
different solutions when the terms $(\px u)^2\,dx$, 
$(\px v)^2\,dx$, or $\left((\px u)^2(\px v)^2\right)dx$ 
become concentrated.  We then show that the triple 
$\left(u(t),v(t),\mu_{(t)}\right)$ satisfies the semigroup 
property, enabling the unique continuation of the solution $(u,v)$ 
beyond collision times while preserving the conserved 
quantities $E_{u_0}$, $E_{v_0}$, $G_0$, and $H_0$ 
(see \eqref{consq} below).

Notice that wave-breaking in the two-component system \eqref{t-c-N} 
can occur due to the concentration of either $(\px u)^2\,dx$, 
$(\px v)^2\,dx$, or $\left((\px u)^2(\px v)^2\right)dx$ in finite time. 
This behavior contrasts sharply with the Novikov equation, where 
only the concentration of $(\px u)^4\,dx$ is possible, while the 
$H^1$ norm of the solution persists for all times, as 
discussed in \cite{CCL18,CCCS18}. A similar energy 
concentration mechanism occurs in the Camassa-Holm equation, 
where $(\px u)^2\,dx$ can concentrate \cite{BC07,BC07d}. 
The difference in energy concentration properties between the 
Novikov equation and the two-component system arises from 
their respective ``transport coefficients", $u^2$ and $uv$. 
Indeed, for the $N$-peakon solution of the 
Novikov equation, given by 
$$
u(t,x)=\sum\limits_{i=1}^{N}p_i(t)e^{-|x-q_i(t)|},
$$
the evolution of the peakon positions $q_i$ 
follows the ODE system:
$$
\frac{d}{dt}q_i=u^2(t,q_i), \quad i=1,\dots, N,
$$
as shown in \cite{HW08}. This implies that all peakons move 
to the right, allowing only over-taking collisions.
In contrast, considering the corresponding $N$-peakon 
solutions for the two-component system \eqref{t-c-N}, where 
$$
u(t,x)=\sum\limits_{i=1}^{N}p_i(t)e^{-|x-q_i(t)|}, 
\quad v(t,x)=\sum\limits_{i=1}^{N}r_i(t)e^{-|x-q_i(t)|},
$$
the evolution of $q_i$ is governed by the system of ODEs:
$$
\frac{d}{dt}q_i=(uv)(t,q_i), \quad i=1,\dots, N,
$$
as established in \cite{CS23}. This allows peakons to 
travel in either direction, making both head-on and over-taking 
collisions possible, similar to the behavior 
observed in the Camassa-Holm equation.

Last but not least, we establish a new 
continuity property of the data-to-solution map 
for the two-component Novikov system \eqref{t-c-N}. 
Assuming that the initial data 
$(u_{0,n},v_{0,n})$ converge to $(u_0,v_0)$ 
in the metric space $(\Sigma,d_\Sigma)$ 
defined by \eqref{Sig} and \eqref{d^2}, we can conclude 
that the corresponding solution components $u_n$ and $v_n$ 
converge uniformly in $t$ and $x$ 
to $u$ and $v$, respectively, 
where $(u,v)$ solves the two-component 
Novikov system with initial data $(u_0,v_0)$.

The article is organized as follows: Section \ref{mr} 
introduces the essential notations and provides a 
summary of the core results presented in this paper.
In Section \ref{chv} we 
derive the ODE system for the new variables defined in terms of the solution $(u,v)$ of \eqref{t-c-N} along the characteristic.
Sections \ref{Gsl} and \ref{GS} consider the local 
and global well-posedness, respectively, of the 
associated ODE system. In Section \ref{GsN}, we 
construct a global conservative weak solution for the 
two-component system by utilizing a unique global 
solution of the ODE system, and we demonstrate 
the continuous dependence of the solution 
on the initial data with respect to the uniform norm. 
Finally, Section \ref{sgs} defines a global 
semigroup of conservative solutions.

\section{Main results}\label{mr}

For the initial data $(u_0,v_0)\in\Sigma$,
we introduce the following quantities:
\begin{equation}\label{consq}
	\begin{split}
		&E_{u_0}=\int_{-\infty}^{\infty}
		\left(u_0^2+(\px u_0)^2\right)(x)\,dx,\quad
		E_{v_0}=\int_{-\infty}^{\infty}
		\left(v_0^2+(\px v_0)^2\right)(x)\,dx,\\
		&G_{0}=\int_{-\infty}^{\infty}
		\left(u_0v_0+(\px u_0)\px v_0\right)(x)\,dx,\\
		&H_0=\int_{-\infty}^{\infty}
		\left(3u_0^2v_0^2+u_0^2(\px v_0)^2+(\px u_0)^2v_0^2
		+4u_0(\px u_0)v_0\px v_0-(\px u_0)^2(\px v_0)^2\right)
		(x)\,dx,
	\end{split}
\end{equation}
as well as
\begin{equation}\label{Ku}
	K_{u_0}=\frac{1}{4}\left(E_{u_0}
	(7E_{u_0}E_{v_0}-H_0)\right)^{1/2}, \quad
	K_{v_0}=\frac{1}{4}\left(E_{v_0}
	(7E_{u_0}E_{v_0}-H_0)\right)^{1/2}.
\end{equation}
Observe that $7E_{u_0}E_{v_0}-H_0\geq 0$, which can 
be derived from \eqref{u_x^2}, as shown below, with 
$u_0$ and $v_0$ substituted 
for $u$ and $v$, respectively.

Applying the operator $(1-\px^2)^{-1}$ to both sides of \eqref{t-c-N}, 
we obtain the following nonlocal system:
\begin{align}\label{t-c-N-n}
\begin{split}
&\pt u+uv\px u+\px P_1+P_2=0,\quad 
u=u(t,x),\,\,v=v(t,x),\,\, P_j=P_j(t,x),\,\,j=1,2,\\
&\pt v+uv\px v+\px S_1+S_2=0,\quad
S_j=S_j(t,x),\,\,j=1,2,
\end{split}
\\
\label{iid}
&u(0,x)=u_0(x),\quad v(0,x)=v_0(x),
\end{align}
where
\begin{equation}\label{P12}
	\begin{split}
		&P_1(t,x)=(1-\px^2)^{-1}\left(
		u^2v+u(\px u)\px v+\frac{1}{2}v(\px u)^2
		\right)(t,x),\\
		&P_2(t,x)=\frac{1}{2}(1-\px^2)^{-1}
		\left((\px u)^2\px v\right)(t,x),
	\end{split}
\end{equation}
and
\begin{equation}
	\label{S12}
	\begin{split}
		&S_1(t,x)=(1-\px^2)^{-1}\left(
		uv^2+v(\px u) \px v+\frac{1}{2}u(\px v)^2
		\right)(t,x),\\
		&S_2(t,x)=\frac{1}{2}(1-\px^2)^{-1}
		\left((\px u)(\px v)^2\right)(t,x).
	\end{split}
\end{equation}
Taking $u=v$, equation \eqref{t-c-N-n} transforms into 
the Novikov equation in its nonlocal form \cite{CCL18}.

To motivate the definition of a global weak 
solution of the Cauchy problem 
\eqref{t-c-N-n}--\eqref{iid}, consider a 
test function $\phi_u\in C^\infty([-T,T]\times\mathbb{R})$ 
with compact support. By multiplying the first equation 
in \eqref{t-c-N-n} by $\px\phi_u$ and applying integration 
by parts, we derive the following expression:
\begin{equation*}
	\begin{split}
	&\int_{-T}^T\int_{-\infty}^{\infty}
	\left((\pt u)\px\phi_u
	+uv(\px u)\px\phi_u
	+(\px P_1+P_2)\px\phi_u\right)
	\,dx\,dt\\
	&=\int_{-T}^T\int_{-\infty}^{\infty}
	\left((\px u)(\pt\phi_u+uv\px\phi_u)
	+\left(u^2v+u(\px u)\px v+\frac{1}{2}v(\px u)^2
	-P_1-\px P_2\right)\phi_u\right)
	\,dx\,dt.
	\end{split}
\end{equation*}
A similar equation can be derived for $v$ using 
the second equation in \eqref{t-c-N-n}.
This motivates us to give 
the following definition:

\begin{definition}[Global weak solution 
of \eqref{t-c-N-n}--\eqref{iid}]\label{defs}
Suppose that $(u_0,v_0)\in\Sigma$, where 
$\Sigma$ is defined in \eqref{Sig}.
We say that a vector function $(u,v)(t,x)$ is a 
global weak solution of the Cauchy 
problem \eqref{t-c-N-n}-\eqref{iid} 
on $\mathbb{R}$, if
$(u,v)(0,x)=(u_0,v_0)(x)$ for all 
$x\in\mathbb{R}$ and $(u,v)$ satisfies 
the equations
\begin{equation}\label{uvw}
	\begin{split}
	&\int_{-T}^T\int_{-\infty}^{\infty}
	\biggl((\px u)(\pt\phi_u+uv\px\phi_u)\\
	&\left.\qquad\qquad\,\,\,\,
	+\left(u^2v+u(\px u)\px v+\frac{1}{2}v(\px u)^2
	-P_1-\px P_2\right)\phi_u\right)\,dx\,dt=0,\\
	&\int_{-T}^T\int_{-\infty}^{\infty}
	\biggl((\px v)(\pt\phi_v+uv\px\phi_v)\\
	&\left.\qquad\qquad\,\,\,\,+\left(uv^2+v(\px u)\px v
	+\frac{1}{2}u(\px v)^2-S_1-\px S_2\right)\phi_v\right)\,dx\,dt=0,
	\end{split}
\end{equation}
for all test functions 
$\phi_u,\phi_v\in C^\infty((-T,T)\times\mathbb{R})$ 
with compact support and arbitrary $T>0$.
Moreover, $u$ and $v$ have the following properties:
\begin{enumerate}
	\item $(u,v)(t,\cdot)$ belongs to 
	$\Sigma$
	for any fixed $t\in\mathbb{R}$;

	\item $u(t,\cdot)$, $v(t,\cdot)$ are Lipschitz 
	continuous with values in $L^2$, that is, for 
	all $t_1,t_2\in[-T,T]$, for any fixed $T>0$, 
	the functions $u$ and $v$ satisfy
	\begin{equation}\label{uL}
		\|u(t_1,\cdot)-u(t_2,\cdot)\|_{L^2},
		\|v(t_1,\cdot)-v(t_2,\cdot)\|_{L^2}
		\leq C|t_1-t_2|,
	\end{equation}
	for some $C=C(E_{u_0},E_{v_0},H_0,T)>0$, 
	where $E_{u_0},E_{v_0}$ 
	and $H_0$ are defined in \eqref{consq};

	\item $u(t,x)$ and $v(t,x)$ are H\"older continuous 
	on $[-T,T]\times\mathbb{R}$ with exponent $1/2$ for any 
	fixed $T>0$, that is, for all $t_1,t_2,\in[-T,T]$ 
	and $x_1,x_2\in\mathbb{R}$ we have
	\begin{equation}\label{uH}
		|u(t_1,x_1)-u(t_2,x_2)|,
		|v(t_1,x_1)-v(t_2,x_2)|\leq
		C\left(|t_1-t_2|^{1/2}
		+|x_1-x_2|^{1/2}\right),
	\end{equation}
for some $C=C(E_{u_0},E_{v_0},H_0,T)>0$.
\end{enumerate}
\end{definition}

Next, we define the concept of a 
conservative solution.

\begin{definition}[Global conservative weak solution 
of \eqref{t-c-N-n}--\eqref{iid}]\label{defsc}
Suppose that $(u_0,v_0)\in\Sigma$, where $\Sigma$ 
is defined in \eqref{Sig}.
We define a vector function $(u,v)(t,x)$ as a 
global conservative weak solution to the Cauchy 
problem \eqref{t-c-N-n}-\eqref{iid} if it meets 
two criteria. First, $(u,v)$ must be a global weak solution 
in accordance with Definition \ref{defs}, and secondly, it 
must adhere to the following five conditions:
\begin{enumerate}
	\item $\int_{-\infty}^{\infty}
	\left(uv+(\px u)\px v\right)(t,x)\,dx=G_{0}$, 
	for any $t\in\mathbb{R}$;
	
	\item there exist positive Radon measures
	$\lambda_t^{(u)}$,
	$\lambda_t^{(v)}$ and $\lambda_{t}^{(uv)}$
	on $\R$
	such that
	\begin{enumerate}
		\item the absolutely continuous parts of
		$\lambda_t^{(u)}$, $\lambda_t^{(v)}$ and $\lambda_t^{(uv)}$ with respect to the 
		Lebesgue measure on $\R$ have the following form:
		\begin{equation*}
		\begin{split}
			d\lambda_t^{(u,ac)}=(\px u)^2\,dx,
			\quad
			d\lambda_t^{(v,ac)}=(\px v)^2\,dx, \quad 
			d\lambda_t^{(uv,ac)}
			=\left((\px u)^2(\px v)^2\right)dx,
		\end{split}
		\end{equation*}
		while the nonzero singular parts of
		$\lambda_t^{(u)}$, $\lambda_t^{(v)}$ 
		and $\lambda_t^{(uv)}$ are supported, 
		for a.e.\,\,$t\in\R$, on the sets where
		$v(t,\cdot)=0$, $u(t,\cdot)=0$ and 
		$(uv)(t,\cdot)=0$ respectively;
	
		\item the following conservation 
		laws hold for any $t\in\mathbb{R}$:
		\begin{equation*}
			\begin{split}
			&\int_{-\infty}^{\infty}u^2(t,x)\,dx
			+\lambda_t^{(u)}(\R)
			=E_{u_0},\quad
			\int_{-\infty}^{\infty}v^2(t,x)\,dx
			+\lambda_t^{(v)}(\R)
			=E_{v_0},\\
			&\int_{-\infty}^{\infty}
			\left(3u^2v^2
			+4uv(\px u)\px v\right)(t,x)\,dx
			+\int_{-\infty}^{\infty}u^2(t,x)
			\,d\lambda_t^{(v)}
			+\int_{-\infty}^{\infty}v^2(t,x)
			\,d\lambda_t^{(u)}
			-\lambda_t^{(uv)}(\R)
			=H_0;
			\end{split}
		\end{equation*}
		\item the following inequality
		holds for any $t\in\R$:
		\begin{equation}
			\label{csho}
			\int_{-\infty}^{\infty}
			\left(3u^2v^2
			+4uv(\px u)\px v-(\px u)^2(\px v)^2\right)
			(t,x)\,dx
			+\int_{-\infty}^{\infty}u^2(t,x)
			\,d\lambda_t^{(v)}
			+\int_{-\infty}^{\infty}v^2(t,x)
			\,d\lambda_t^{(u)}
			\geq H_0;
		\end{equation}
	\end{enumerate}
	
	\item 
	the following inequalities 
	hold for any $t\in\R$:
	\begin{align}\label{consin}
		&\|u(t,\cdot)\|_{H^1}^2\leq E_{u_0},\quad
		\|v(t,\cdot)\|_{H^1}^2\leq E_{v_0}.
	\end{align}
\end{enumerate}
Moreover, introducing the sets
\begin{equation}\label{DN}
	\begin{split}
		&D_W(t)=\left\{
		\xi\in\mathbb{R}:\cos\frac{W(t,\xi)}{2}=0\right\},
		\quad
		D_Z(t)=\left\{
		\xi\in\mathbb{R}:\cos\frac{Z(t,\xi)}{2}=0\right\},
		\\
		&N_W=\left\{
		t\in\mathbb{R}:\mathrm{meas}(D_W(t))> 0\right\},
		\quad
		N_Z=\left\{
		t\in\mathbb{R}:\mathrm{meas}(D_Z(t))> 0\right\},
	\end{split}
\end{equation}
where $(U,V,W,Z,q)$, as defined in Theorem \ref{gwp} below, 
depends solely on the initial conditions $(u_0,v_0)$, 
and $\mathrm{meas}(\cdot)$ denotes 
the Lebesgue measure on $\mathbb{R}$, we also have
\begin{enumerate}\setcounter{enumi}{3}
	\item $\|u(t,\cdot)\|_{H^1}^2=E_{u_0}$, 
	for any $t\in\mathbb{R}\setminus N_W$;
	\quad $\|v(t,\cdot)\|_{H^1}^2=E_{v_0}$,
	for any $t\in\mathbb{R}\setminus N_Z$;
	$$
	\int_{-\infty}^{\infty}
	\left(3u^2v^2+u^2(\px v)^2+v^2(\px u)^2
	+4uv(\px u)\px v-(\px u)^2(\px v)^2\right)
	(t,x)\,dx=H_0,
	$$
	\qquad\qquad\qquad\qquad\qquad\qquad
	\qquad\qquad\qquad\qquad\qquad\qquad 
	for any $t\in\mathbb{R}\setminus (N_W\cup N_Z)$;
	\item 
	\begin{itemize}
		\item if $\mathrm{meas}(N_W)=0$, 
		then $\lambda_t^{(u)}$ is a measure-valued 
		solution $w_u$ of the following equation:
		$$
		\pt w_u+\px(uvw_u)
		=2(\px u)(u^2v-P_1-\px P_2)+u(\px u)^2\px v;
		$$
	
		\item if $\mathrm{meas}(N_Z)=0$, then
		$\lambda_t^{(v)}$ is a measure-valued solution 
		$w_v$ of the following equation:
		$$
		\pt w_v+\px(uvw_v)
		=2(\px v)(uv^2-S_1-\px S_2)+v\px u(\px v)^2;
		$$

		\item if $\mathrm{meas}(N_W)$, 
		$\mathrm{meas}(N_Z)=0$, then
		$\lambda_t^{(uv)}$ is a measure-valued 
		solution $w_{uv}$ of the following equation:
		\begin{equation}\label{wuv}
			\pt w_{uv}+\px(uvw_{uv})
			=2(\px u)(\px v)
			\left((u^2v-P_1-\px P_2)\px v
			+(uv^2-S_1-\px S_2)\px u
			\right);
		\end{equation}
		
\end{itemize}
\end{enumerate}
\end{definition}

\begin{remark}[Sets $N_W$ and $N_Z$]\label{NWNZ}
A solution $(u,v)(t,x)$ can experience wave breaking only 
for $t\in N_W\cup N_Z$. More precisely, for $t\in N_W$, 
we have the concentration of $(\partial_x u)^2\,dx$ 
and the product $(\partial_x u)^2(\partial_x v)^2\,dx$, while 
for $t\in N_Z$, we have the concentration of 
$(\partial_x v)^2\,dx$ and $(\partial_x u)^2(\partial_x v)^2\,dx$, 
see (2) and (4) of Definition \ref{defsc}.
\end{remark}

\begin{remark}[Measure-valued solution]\label{wms}
In items (5) of Definition \ref{defsc} and 
Theorem \ref{Thm} below, we consider $w$ to be a 
measure-valued solution of a 
linear transport equation with a source, given by
$$
\partial_t w + \partial_x (uwv) = f,
$$
when the following identity is satisfied:
\begin{equation*}
	\int_{-T}^T\int_{-\infty}^{\infty}
	(\partial_t \phi + uv \partial_x \phi)
	\, dw \, dt + \int_{-T}^T\int_{-\infty}^{\infty} 
	f \phi \, dx \, dt = 0,
\end{equation*}
for any $\phi \in C^\infty((-T,T)\times\mathbb{R})$ 
with compact support and for any arbitrary $T > 0$.
\end{remark}

Now, we are in a position to present 
the primary result of this paper.
Let us consider a positive Radon measure $\mu$ 
on $\mathbb{R}$ such that (cf.~\cite[Section 6]{BC07})
\begin{equation}\label{mu}
	\mu=\mu^{ac}+\mu^s,\quad
	d\mu^{ac}=
	\left(
	(\px u)^2+(\px v)^2+(\px u)^2(\px v)^2
	\right)dx,
\end{equation}
where $\mu^{ac}$ and $\mu^s$ respectively denote 
the absolutely continuous and singular parts 
of $\mu$ with respect to the Lebesgue measure 
on $\mathbb{R}$ \cite[Section 1.6, Theorem 3]{EG92}.
Introduce the set
\begin{equation}\label{D}
	\mathcal{D}
	=\left\{(u,v,\mu):(u,v)\in\Sigma
	\mbox{ and }\mu
	\mbox{ is a positive Radon measure which satisfies } 
	\eqref{mu}
	\right\},
\end{equation}
where the metric space $(\Sigma,d_\Sigma)$ is 
defined by \eqref{Sig} and \eqref{d^2}.

This is our main theorem:

\begin{theorem}[Global semigroup of conservative solutions]
\label{Thm}
Consider $(u_0,v_0,\mu_0)\in\mathcal{D}$. 
Then there exists a flow map 
$\Psi_t:\mathbb{R}\times\mathcal{D}
\to\mathcal{D}$ whose trajectories
$\Psi_t(u_0,v_0,\mu_0)=(u(t),v(t),\mu_{(t)})$ 
satisfy the following properties:
\begin{enumerate}
	\item $(u,v)$ is a global conservative weak solution 
	of \eqref{t-c-N-n}--\eqref{iid} in the 
	sense of Definition \ref{defsc};
	
	\item $\Psi_t$ satisfies the semigroup property, that 
	is (i) $\Psi_0=\mathrm{id}$ and 
	(ii) $\Psi_{t+\tau}=\Psi_{t}\circ\Psi_{\tau}$;

	\item assuming that 
	\begin{enumerate}[a)]
		\item $d_\Sigma\left((u_{0,n},v_{0,n}),
		(u_0,v_0)\right)\to0$ as $n\to\infty$,
		
		\item $\mu_{0,n}
		\overset{\ast}{\rightharpoonup} \mu_0$ 
		(weakly-$\ast$) as $n\to\infty$,
	\end{enumerate}
	we have $\forall T>0$,
	\begin{equation}\label{unifc1}
		\|(u_n-u)\|
		_{L^\infty([-T,T]\times\mathbb{R})}
		+\|(v_n-v)\|
		_{L^\infty([-T,T]\times\mathbb{R})}\to 0,
		\quad n\to\infty.
	\end{equation}
	Here, $(u_n,v_n)$ are the solutions of 
	\eqref{t-c-N-n} that correspond to the 
	initial data $(u_{0,n},v_{0,n})$;

	\item $\mu_{(t)}(\mathbb{R})\leq 
	E_{u_0}+E_{v_0}+7E_{u_0}E_{v_0}-H_0$ for any 
	$t\in\mathbb{R}$;

	\item if $\mathrm{meas}(N_W)$, $\mathrm{meas}(N_Z)=0$, 
	then $\mu_{(t)}$ is a measure-valued solution $w$ 
	of the following transport equation with source term:
	\begin{equation}\label{mvm1}
		\begin{split}
			\pt w+\px(uvw)
			=&2\px u(1+(\px v)^2)(u^2v-P_1-\px P_2)
			+2\px v(1+(\px u)^2)(uv^2-S_1-\px S_2)
			\\&+u(\px u)^2\px v+v\px u(\px v)^2.
		\end{split}
	\end{equation}
\end{enumerate}
\end{theorem}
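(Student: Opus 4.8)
The plan is to assemble the theorem from the global well-posedness of the Lagrangian ODE system and the reconstruction procedure developed in the preceding sections. Starting from $(u_0,v_0,\mu_0)\in\mathcal{D}$, I would first fix an initial parametrization of the characteristics, producing initial data in the Banach space $E$ for the semilinear system \eqref{ODE}--\eqref{id}; here the measure $\mu_0$ is used to distribute the singular energy across the new variable $\xi$, which is precisely what makes the map $\mathcal{D}\to E$ well defined even when $(u_0,v_0)$ alone does not determine the concentrated part. By the local and global theory of Sections \ref{Gsl} and \ref{GS}, this yields a unique global solution $(U,V,W,Z,q)(t,\cdot)$. Defining $(u,v)$ through the inverse change of variables $x=y(t,\xi)$, $u(t,y(t,\xi))=U(t,\xi)$, $v(t,y(t,\xi))=V(t,\xi)$, and letting $\mu_{(t)}$ be the push-forward under $y(t,\cdot)$ of the energy density carried by the Lagrangian variables, property (1) is exactly the output of Section \ref{GsN}: one checks the weak identities \eqref{uvw} and the conservation laws and inequalities of Definition \ref{defsc} by direct substitution, the sets $D_W(t),D_Z(t),N_W,N_Z$ of \eqref{DN} recording where $\cos(W/2)$ or $\cos(Z/2)$ vanishes, i.e.\ where the Jacobian $q$ degenerates and energy concentrates.

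The semigroup property (2) is where I expect the real difficulty. The identity $\Psi_0=\mathrm{id}$ is immediate from the reconstruction at $t=0$. For $\Psi_{t+\tau}=\Psi_t\circ\Psi_\tau$ the obstruction is relabeling (reparametrization) invariance: the Eulerian triple $(u,v,\mu)$ fixes the Lagrangian data only up to a monotone change of the variable $\xi$, so restarting at time $\tau$ from $(u(\tau),v(\tau),\mu_{(\tau)})$ generically produces a different parametrization than the one obtained by flowing the original data to time $\tau$. I would therefore prove a relabeling lemma showing that the reconstructed Eulerian triple is invariant under such reparametrizations, and combine it with the autonomy (time-translation invariance) of the ODE system and the uniqueness of its global flow. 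Concretely, the flow from time $\tau$ to $t+\tau$ of the original Lagrangian trajectory and the flow from $0$ to $t$ of a re-initialized trajectory differ only by an admissible relabeling, hence project to the same $(u,v,\mu)$; the delicate point is that the re-initialization must reproduce the singular part of $\mu_{(\tau)}$ exactly, which is guaranteed because $\mu_{(\tau)}$ is carried forward as the push-forward measure rather than recomputed from $(u,v)(\tau)$.

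For the continuous-dependence statement (3) I would transport the convergence to the Lagrangian picture. Assumptions (a) and (b), namely $d_\Sigma((u_{0,n},v_{0,n}),(u_0,v_0))\to0$ together with $\mu_{0,n}\overset{\ast}{\rightharpoonup}\mu_0$, are designed so that the corresponding initial data converge in $E$; one then invokes the continuous dependence of solutions of \eqref{ODE}--\eqref{id} on their initial data established in Sections \ref{Gsl}--\ref{GS} to get $(U_n,V_n,W_n,Z_n,q_n)\to(U,V,W,Z,q)$ locally uniformly in $(t,\xi)$, and in particular $y_n\to y$ and $U_n\to U$ uniformly on $[-T,T]\times\mathbb{R}$. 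Uniform convergence of $u_n\to u$ in \eqref{unifc1} then follows by writing $u_n(t,x)-u(t,x)$ through the two parametrizations and controlling the mismatch with the H\"older bound \eqref{uH}, which holds with a constant depending only on the conserved quantities and hence uniformly in $n$; the same argument applies to $v_n$.

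Finally, (4) and (5) are consequences of the conservation laws already recorded in Definition \ref{defsc}. For (4) I would write $\mu_{(t)}(\mathbb{R})=\lambda_t^{(u)}(\mathbb{R})+\lambda_t^{(v)}(\mathbb{R})+\lambda_t^{(uv)}(\mathbb{R})$; the first two masses are bounded by $E_{u_0}$ and $E_{v_0}$ respectively because $\int u^2\,dx,\int v^2\,dx\ge 0$ in the conservation laws of (2)(b), while the third is eliminated in favour of $7E_{u_0}E_{v_0}-H_0$ by solving the $H_0$ law for $\lambda_t^{(uv)}(\mathbb{R})$ and estimating the remaining integrals via the Sobolev bound $\|u\|_{L^\infty}^2\le\frac{1}{2}E_{u_0}$, $\|v\|_{L^\infty}^2\le\frac{1}{2}E_{v_0}$ and Young's inequality, using $\|u\|_{H^1}^2\le E_{u_0}$, $\|v\|_{H^1}^2\le E_{v_0}$ from \eqref{consin}, which yields the stated bound. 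For (5), when $\mathrm{meas}(N_W)=\mathrm{meas}(N_Z)=0$ the singular parts of $\lambda_t^{(u)},\lambda_t^{(v)},\lambda_t^{(uv)}$ are absent, so $\mu_{(t)}=\lambda_t^{(u)}+\lambda_t^{(v)}+\lambda_t^{(uv)}$; adding the three measure-valued transport equations of Definition \ref{defsc}(5) and collecting the factors $1+(\px v)^2$ and $1+(\px u)^2$ reproduces exactly the source in \eqref{mvm1}, which is the desired conclusion.
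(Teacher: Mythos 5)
Your overall architecture (Lagrangian ODE system, reconstruction, push-forward measure, relabeling plus uniqueness for the semigroup law, summing conservation laws for the mass bound) matches the paper's, and your treatment of items (1), (2), (4) and (5) is essentially the intended one — in particular, for item (5) your observation that \eqref{mvm1} is just the sum of the three transport equations of Definition \ref{defsc}(5), since $\mu_{(t)}=\lambda_t^{(u)}+\lambda_t^{(v)}+\lambda_t^{(uv)}$, is a clean shortcut that is equivalent to the paper's direct computation.

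There is, however, a genuine gap in your argument for item (3). You assert that hypotheses a) and b) force the Lagrangian initial data to converge in the Banach space $E$ of \eqref{EB}, and you then invoke continuous dependence of the flow of \eqref{ODE} in $E$. Neither half of this is available. The norm of $E$ controls $W,Z$ in $L^2\cap L^\infty$ and $U,V$ in $H^1\cap W^{1,4}$, but from $d_\Sigma$-convergence one only gets $\px u_{0,n}\to\px u_0$ in $L^2$, which does not give uniform convergence of $2\arctan(\px u_{0,n})\circ y_{0,n}$: a tall thin spike in $\px u_{0,n}$ vanishes in $L^2$ while keeping $\|W_{0,n}-W_0\|_{L^\infty}$ of order $\pi$. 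The paper is explicit about this (Step 4 of the proof of Proposition \ref{Pdsm}): only $\|U_{0,n}-U_0\|_{L^\infty}$, $\|V_{0,n}-V_0\|_{L^\infty}$ (see \eqref{U0U0n}) and $\|W_{0,n}-W_0\|_{L^2}$, $\|Z_{0,n}-Z_0\|_{L^2}$ (see \eqref{W0W0n}) are established, and the Lipschitz estimate for the right-hand side of \eqref{ODE} in the $E$-norm is declared inapplicable. The actual proof introduces the weaker functional $F_n(t)=\|U-U_n\|_{L^\infty}+\|V-V_n\|_{L^\infty}+\|W-W_n\|_{L^2}+\|Z-Z_n\|_{L^2}+\|q-q_n\|_{L^2}$ and proves the Gronwall inequality \eqref{Fd} for it directly, which requires re-estimating the differences of the nonlocal terms $P_j-P_{j,n}$, $S_j-S_{j,n}$ in $L^2\cap L^\infty$ purely in terms of $F_n$ (using the convolution structure and the decay kernel $\Gamma$). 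This is the technical heart of item (3) and cannot be replaced by an off-the-shelf continuous-dependence statement in $E$. A secondary omission in the same item: when $\mu_0$ has a singular part, $y_0$ is defined by \eqref{y0m}, and deducing $\|y_{0,n}-y_0\|_{L^\infty}\to0$ from the weak-$\ast$ convergence $\mu_{0,n}\overset{\ast}{\rightharpoonup}\mu_0$ requires its own argument (monotonicity of $\xi\mapsto y_0(\xi)$ plus convergence of $\mu_{0,n}([0,\tilde y])$ and a compactness/Dini step), which your proposal does not supply.
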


\begin{remark}
Notice that unlike the problem for the Camassa-Holm 
equation \cite{BC07}, the sets $N_W$ and/or $N_Z$ can 
have positive measure. See also Remark \ref{NWNZ} above.
The same situation occurs in the case of the 
Novikov equation, where $u=v$ 
and $N_W=N_Z$ \cite{CCL18}.
\end{remark}

\begin{remark}
Note that for the Novikov equation \eqref{N}, item (1) 
in Definition \ref{defsc} ensures the 
preservation of the $H^1$ norm of $u$ at all times $t$, 
which aligns with the findings in \cite{CCL18}. 
Consequently, the concentration of 
$(\partial_x u)^4$ is exclusive to the 
solution of the Novikov equation. In contrast, for the 
two-component system \eqref{t-c-N-n}--\eqref{iid}, 
concentrations can occur 
for $(\partial_x u)^2$, $(\partial_x v)^2$, 
and $(\partial_x u)^2(\partial_x v)^2$. 
See also Remark \ref{NWNZ}.
\end{remark}

\begin{remark}
For the Novikov equation \eqref{N}, the Radon measure 
$\lambda_{t}^{(uu)}$ satisfies \eqref{wuv} for 
any values of $\mathrm{meas}(N_W)$ \cite{CCL18, CCCS18}. 
However, when $u$ and $v$ are unrelated, we must impose 
the additional assumption that $\mathrm{meas}(N_W)=0$ 
and $\mathrm{meas}(N_Z)=0$ to ensure that certain integrals in 
the weak formulation of the solution vanish. 
This is demonstrated in the proof of item 
(4) in 
Proposition \ref{mcl} below (see \cite{HQ21}).
For the same reasons, we need to impose 
assumptions on $\mathrm{meas}(N_W)$ and/or 
$\mathrm{meas}(N_Z)$ to assert that the 
Radon measures $\lambda_{t}^{(u)}$, 
$\lambda_{t}^{(v)}$, and $\mu_{(t)}$ 
satisfy the corresponding transport 
equations with a source (see item (5) 
of Definition \ref{defsc}).
\end{remark}

\begin{remark}
By referring to \eqref{u_x^2} below 
and utilizing \eqref{consin} 
through \eqref{csho}, we can conclude that
$$
\|((\px u)\px v)(t,\cdot)\|_{L^2}^2
\leq 7E_{u_0}E_{v_0}-H_0,\quad
\mbox{for any }t\in\mathbb{R}.
$$
\end{remark}

\section{Change of variables and a semilinear system}\label{chv}
\subsection{Conservation laws}
\label{cls}

We begin by presenting the fundamental conservation laws 
of \eqref{t-c-N} and some a priori estimates. 
Differentiating \eqref{t-c-N-n} with respect to $x$, 
we obtain
\begin{equation}\label{tcNnd}
	\begin{split}
		&\pt\px u+uv\px^2 u-u^2v+\frac{1}{2}v(\px u)^2
		+P_1+\px P_2=0,\\
		&\pt\px v+uv\px^2v-uv^2+\frac{1}{2}u(\px v)^2
		+S_1+\px S_2=0,
	\end{split}
\end{equation}
where $P_j$ and $S_j$, $j=1,2$, are 
given in \eqref{P12} and \eqref{S12} respectively. 
These equations, along with \eqref{t-c-N}, can be 
used to prove the following proposition:

\begin{proposition}[\cite{HQ21,HLQ23,L19}]\label{PCL}
Assume that $(u_0,v_0)\in \Sigma$, 
where the metric space $(\Sigma,d_\Sigma)$ 
is defined by \eqref{Sig} and \eqref{d^2}.
Then the solution of the two-component Novikov equation 
formally satisfies the following local conservation laws:
\begin{equation*}
	\begin{split}
		&\int_{-\infty}^{\infty}
		\left(u^2+(\px u)^2\right)(t,x)\,dx=E_{u_0},\quad
		\int_{-\infty}^{\infty}
		\left(v^2+(\px v)^2\right)(t,x)\,dx=E_{v_0},
	\end{split}
\end{equation*}
and
\begin{equation*}
	\int_{-\infty}^{\infty}
	\left(uv+(\px u)\px v\right)(t,x)\,dx=G_{0},
\end{equation*}
as well as
\begin{equation}\label{hc}
	\int_{-\infty}^{\infty}
	\left(3u^2v^2+u^2(\px v)^2+v^2(\px u)^2
	+4uv(\px u)\px v-(\px u)^2(\px v)^2\right)
	(t,x)\,dx=H_0,
\end{equation}
for all $t$, where $E_{u_0}$, $E_{v_0}$, $G_0$ 
and $H_0$ are given in \eqref{consq}.
\end{proposition}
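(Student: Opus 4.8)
The plan is to show that the time derivative of each of the four integrals vanishes, working formally (i.e.\ assuming the solution is smooth with enough spatial decay that every integration by parts has no boundary contribution). The two essential tools are the self-adjointness of the Helmholtz operator $(1-\px^2)^{-1}$ and the defining relations obtained by applying $(1-\px^2)$ to \eqref{P12} and \eqref{S12}, namely $(1-\px^2)P_1=u^2v+u(\px u)\px v+\frac12 v(\px u)^2$ and $(1-\px^2)P_2=\frac12(\px u)^2\px v$, together with their $u\leftrightarrow v$ analogues for $S_1,S_2$. These relations let me trade every nonlocal term $P_j,S_j$ that appears after substituting the evolution equations for a local polynomial in $u,v$ and their derivatives.

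For the two quadratic laws I would write $E_{u}(t)=\int(u^2+(\px u)^2)\,dx=\int u\,(1-\px^2)u\,dx$ and differentiate, using self-adjointness of $(1-\px^2)$ to obtain $\frac{d}{dt}E_u=2\int m\,\pt u\,dx$ with $m=u-\px^2u$. Substituting $\pt u=-uv\px u-\px P_1-P_2$ from \eqref{t-c-N-n} and localizing the nonlocal terms via the relations above, the contributions of the transport term $-uv\px u$, of $\px P_1$, and of $P_2$ reduce to a handful of cubic monomials ($u^3\px v$, $v(\px u)^3$, and $u(\px u)^2\px v$) that cancel identically (equivalently, assemble into an exact $x$-derivative), so $\frac{d}{dt}E_u=0$. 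The law for $E_{v_0}$ follows by the symmetry $u\leftrightarrow v$, $P_j\leftrightarrow S_j$ of the system.

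The bilinear law is treated the same way: differentiating $G(t)=\int(uv+(\px u)\px v)\,dx$ and integrating the cross terms by parts gives $\frac{d}{dt}G=\int(n\,\pt u+m\,\pt v)\,dx$ with $n=v-\px^2 v$. Substituting \eqref{t-c-N-n} and eliminating $P_j,S_j$ through the $(1-\px^2)P_j$, $(1-\px^2)S_j$ relations again produces only local monomials, which cancel, giving $G(t)\equiv G_0$.

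The \textbf{main obstacle} is the quartic law \eqref{hc}. Here I would differentiate the density $3u^2v^2+u^2(\px v)^2+v^2(\px u)^2+4uv(\px u)\px v-(\px u)^2(\px v)^2$ in time, substituting \eqref{t-c-N-n} for $\pt u,\pt v$ and \eqref{tcNnd} for $\pt\px u,\pt\px v$; this generates a large collection of terms, both local and nonlocal. The strategy is to organize the computation by homogeneity degree and by the $u\leftrightarrow v$ symmetry, to use the $(1-\px^2)P_j$, $(1-\px^2)S_j$ relations repeatedly to convert every occurrence of $P_j,S_j$ (including those produced by $\px P_1,\px S_1$) into local polynomials, and then to verify that the resulting expression is an exact $x$-derivative $\px\mathcal F$ for a suitable flux $\mathcal F$. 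I expect the bookkeeping of these cancellations---in particular matching the nonlocal contributions from $\px P_1$ and $\px S_1$ against the local quartic terms---to be the principal difficulty; it may be cleanest to first establish the pointwise local conservation law $\pt\mathcal H+\px\mathcal F=0$ for the density $\mathcal H$ appearing in \eqref{hc}, and only then integrate in $x$ to obtain the stated global identity.
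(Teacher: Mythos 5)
The paper gives no proof of this proposition: it merely remarks that \eqref{tcNnd} together with \eqref{t-c-N} can be used, and defers to the cited references, which carry out exactly the formal computation you describe (differentiate each functional in $t$, rewrite via $m=(1-\px^2)u$, $n=(1-\px^2)v$, substitute \eqref{t-c-N-n}, localize $P_j,S_j$ through $(1-\px^2)P_j$, $(1-\px^2)S_j$, and check that the surviving monomials assemble into exact $x$-derivatives). Your treatment of $E_{u_0}$, $E_{v_0}$, $G_0$ is correct as stated -- the cancelling monomials $u^3\px v$, $v(\px u)^3$, $u(\px u)^2\px v$ are exactly the right ones -- and your plan for \eqref{hc}, while not carried to completion, is the same (admittedly lengthy) bookkeeping that the references perform, so this is essentially the paper's approach.
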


We conclude this subsection by providing some uniform 
bounds for $P_j$, $\px P_j$ and $S_j$, $\px S_j$, 
$j=1,2$. By employing 
the Sobolev embedding theorem and the 
Cauchy-Schwarz inequality, we obtain the following:
\begin{equation}\label{L1-est}
	\begin{split}
		\left\|u^2v+u(\px u)\px v+\frac{1}{2}v(\px u)^2
		\right\|_{L^1}&
		\leq\frac{1}{2}
		\left\|\left(u^2+(\px u)^2\right)v\right\|_{L^1}
		+\frac{1}{2}\left\|u^2v\right\|_{L^1}
		+\|u(\px u)\px v\|_{L^1}\\
		&\leq
		\frac{1}{2}\|v\|_{L^\infty}
		\left(\|u\|^2_{H^1}
		+\|u\|^2_{L^2}\right)
		+\|u\|_{L^\infty}\|\px u\|_{L^2}\|\px v\|_{L^2}\\
		&\leq 2E_{u_0}E_{v_0}^{1/2}.
	\end{split}
\end{equation}
Combining \eqref{P12} and \eqref{L1-est} yields
\begin{equation*}
	\begin{split}
		\|P_1(t,\cdot)\|_{L^p},\,\,
		\|\px P_1(t,\cdot)\|_{L^p}\leq
		\frac{1}{2}\left\|e^{-|\cdot|}\right\|_{L^p}
		\left\|u^2v+u(\px u)\px v+\frac{1}{2}v(\px u)^2
		\right\|_{L^1}
		\leq C_pE_{u_0}E_{v_0}^{1/2},
	\end{split}
\end{equation*}
for some constant $C_p>0$, $p\in [1,\infty]$.
Arguing similarly for $S_1$, we obtain
\begin{equation*}
	\begin{split}
		\|S_1(t,\cdot)\|_{L^p},\,\,
		\|\px S_1(t,\cdot)\|_{L^p}\leq
		\frac{1}{2}\left\|e^{-|\cdot|}\right\|_{L^p}
		\left\|uv^2+v(\px u) \px v+\frac{1}{2}u(\px v)^2
		\right\|_{L^1}
		\leq C_pE_{u_0}^{1/2}E_{v_0},
	\end{split}
\end{equation*}
where we have used \eqref{L1-est}.

To obtain similar a priori 
estimates for $P_2$ and $S_2$, which involve 
$(\px u)^2\px v$ and $(\px u)(\px v)^2$ respectively, one 
should employ the higher-order conservation law \eqref{hc}, 
as discussed in \cite{CCL18}. Indeed, using
\begin{equation}\label{u_x^2}
	\begin{split}
		\int_{-\infty}^{\infty}(\px u)^2(\px v)^2\,dx&=
		\int_{-\infty}^{\infty}\left(3u^2v^2+u^2(\px v)^2
		+v^2(\px u)^2+4uv(\px u)\px v\right)\,dx-H_0\\
		&\leq\|u\|^2_{L^{\infty}}\|v\|^2_{H^1}
		+\int_{-\infty}^{\infty}\left(2u^2v^2
		+v^2(\px u)^2+4uv(\px u)\px v\right)\,dx-H_0
		\\ &\leq\|u\|^2_{L^{\infty}}\|v\|^2_{H^1}
		+\|u\|^2_{H^1}\|v\|^2_{L^{\infty}}
		+\|u\|^2_{L^{\infty}}\|v\|^2_{L^2}
		\\ &\quad+4\|uv\|_{L^\infty}
		\|\px u\|_{L^2}\|\px v\|_{L^2}
		-H_0\leq 7E_{u_0}E_{v_0}-H_0,
	\end{split}
\end{equation}
we have by the Cauchy-Schwarz inequality that
\begin{equation*}
	\|(\px u)^2\px v\|_{L^1}\leq\|\px u\|_{L^2}
	\|(\px u)\px v\|_{L^2}\leq
	\left(E_{u_0}(7E_{u_0}E_{v_0}-H_0)\right)^{1/2},
\end{equation*}
and
\begin{equation*}
	\|(\px u)(\px v)^2\|_{L^1}\leq\|\px v\|_{L^2}
	\|(\px u)\px v\|_{L^2}\leq
	\left(E_{v_0}(7E_{u_0}E_{v_0}-H_0)\right)^{1/2}.
\end{equation*}
This yields, for any 
$p\in[1,\infty]$ and some $C_p>0$, that
\begin{equation*}
	\begin{split}
		\|P_2(t,\cdot)\|_{L^p},\,\,
	\|\px P_2(t,\cdot)\|_{L^p}\leq
	\frac{1}{4}\left\|e^{-|\cdot|}\right\|_{L^p}
	\left\|(\px u)^2\px v\right\|_{L^1}
	\leq C_pK_{u_0},
	\end{split}
\end{equation*}
and
\begin{equation*}
	\begin{split}
	\|S_2(t,\cdot)\|_{L^p},\,\,
	\|\px S_2(t,\cdot)\|_{L^p}\leq
	\frac{1}{4}\left\|e^{-|\cdot|}\right\|_{L^p}
	\left\|(\px u)(\px v)^2\right\|_{L^1}
	\leq C_pK_{v_0},
	\end{split}
\end{equation*}
where $K_{u_0}$ and $K_{v_0}$ 
are given in \eqref{Ku}.

\subsection{New variables}
In this subsection, we revisit the formalism 
regarding the change of variables for \eqref{t-c-N}, 
alongside its reduction to an equivalent ODE system, 
as outlined in \cite[Section III]{HQ21}. The arguments presented 
herein are formal and will be substantiated 
in Section \ref{GsN}.

Define the characteristic $y(t, \xi)$ 
as the solution to the following Cauchy problem:
\begin{equation}\label{char}
	\begin{split}
		\pt y(t,\xi)=u(t,y(t,\xi))v(t,y(t,\xi)),
		\quad y(0,\xi)=y_0(\xi),
\end{split}
\end{equation}
where the initial data $y_0(\xi)$ is a strictly 
monotone increasing function given 
by (cf.~\cite[Equation (3.1)]{BC07}):
\begin{equation}\label{y0}
	\int_0^{y_0(\xi)}\left(1+(\px u_0)^2(x)\right)
	\left(1+(\px v_0)^2(x)\right)\,dx=\xi.
\end{equation}
Then we introduce the following new 
unknowns (cf.~\cite{BC07,CCL18, HQ21}):
\begin{equation}\label{UV}
	U(t,\xi)=u(t,y(t,\xi)),\quad
	V(t,\xi)=v(t,y(t,\xi)),
\end{equation}
\begin{equation}\label{WZ}
	W(t,\xi)=2\arctan \px u(t,y(t,\xi)),\quad
	Z(t,\xi)=2\arctan \px v(t,y(t,\xi)),
\end{equation}
and
\begin{equation}\label{q}
	q(t,\xi)=\left(1+(\px u)^2\right)
	\left(1+(\px v)^2)(t,y(t,\xi)\right)\pxi y(t,\xi).
\end{equation}

In the subsequent discussion, for the sake of simplicity, we 
will typically omit the arguments of $U, V, W, Z, q$, 
and $y$. Equations \eqref{UV} and \eqref{WZ} lead to 
the following (where $\px u = (\px u)(t,y)$, 
$\px v = (\px v)(t,y)$, and $y$ 
is determined by \eqref{char}):
\begin{equation}\label{trig}
	\begin{split}
		&\cos^2\frac{W}{2}=\frac{1}{1+(\px u)^2},\quad
		\sin^2\frac{W}{2}=\frac{(\px u)^2}{1+(\px u)^2},\quad
		\sin W=\frac{2\px u}{1+(\px u)^2},\\
		&\cos^2\frac{Z}{2}=\frac{1}{1+(\px v)^2},\quad
		\sin^2\frac{Z}{2}=\frac{(\px v)^2}{1+(\px v)^2},\quad
		\sin Z=\frac{2\px v}{1+(\px v)^2}.
	\end{split}
\end{equation}
Combining \eqref{q} and \eqref{trig} we conclude 
that (see \cite[Equation (3.2)]{HQ21})
\begin{equation}\label{pxiy}
	\pxi y=q\cos^2\frac{W}{2}\cos^2\frac{Z}{2},
\end{equation}
which implies
\begin{equation}\label{ydiff}
	y(t,\xi)-y(t,\eta)=\int_\eta^\xi
	\left(q\cos^2\frac{W}{2}
	\cos^2\frac{Z}{2}\right)(t,s)\,ds,
	\quad \xi,\eta\in\mathbb{R}.
\end{equation}

\subsection{A semilinear system}
From \eqref{t-c-N-n}, \eqref{char} and \eqref{UV}, 
we can deduce that
\begin{equation}\label{UVd}
	\pt U=-(\px P_1+P_2)(t,y),\quad
	\pt V=-(\px S_1+S_2)(t,y).
\end{equation}
Taking into account \eqref{trig}
and using \eqref{tcNnd}, \eqref{char}, \eqref{WZ}, 
we arrive at
\begin{equation}\label{Wd}
	\begin{split}
		\pt W&=\frac{2}{1+(\px u)^2}
		\left(\pt\px u+uv\px^2 u\right)(t,y)\\
		&=\frac{2}{1+(\px u)^2}\left(u^2v
		-\frac{1}{2}v(\px u)^2
		-P_1-\px P_2\right)(t,y)\\
		&=2U^2V\cos^2\frac{W}{2}-V\sin^2\frac{W}{2}
		-2\cos^2\frac{W}{2}
		\left(P_1+\px P_2\right)(t,y),
	\end{split}
\end{equation}
as well as, after similar calculations,
\begin{equation}\label{Zd}
	\pt Z=2UV^2\cos^2\frac{Z}{2}-U\sin^2\frac{Z}{2}
	-2\cos^2\frac{Z}{2}
	\left(S_1+\px S_2\right)(t,y).
\end{equation}
Equations \eqref{tcNnd}, \eqref{q} 
and \eqref{trig} imply 
\begin{equation*}
	\begin{split}
		\pt q&=2(\px u)\left(\pt\px u+uv\px^2 u\right)
		\left(1+(\px v)^2\right)(t,y)\pxi y\\
		&\quad+
		2(\px v)\left(\pt\px v+uv\px^2 v\right)
		\left(1+(\px u)^2\right)(t,y)\pxi y\\
		&=q\left(U^2V-\frac{1}{2}v(\px u)^2
		-(P_1+\px P_2)(t,y)\right)\sin W\\
		&\quad+q\left(UV^2-\frac{1}{2}u(\px v)^2
		-(S_1+\px S_2)(t,y)\right)\sin Z
		+q\left(v\px u+u\px v\right)(t,y).
	\end{split}
\end{equation*}
Then using
\begin{equation*}
	\begin{split}
		&q(\px u)v\left(1-\frac{1}{2}(\px u)\sin W\right)
		=\frac{qv\px u}{1+(\px u)}
		=\frac{1}{2}qV\sin W,
		\\ & qu(\px v)\left(1-\frac{1}{2}(\px v)\sin Z\right)
		=\frac{qu\px v}{1+(\px v)}
		=\frac{1}{2}qU\sin Z,
	\end{split}
\end{equation*}
we obtain
\begin{equation}\label{qd}
	\begin{split}
		\pt q=&q\left(U^2V+\frac{1}{2}V
		-(P_1+\px P_2)(t,y)\right)\sin W
		\\ &
		+q \left(UV^2+\frac{1}{2}U
		-(S_1+\px S_2)(t,y)\right)\sin Z.
	\end{split}
\end{equation}

To derive a system of ODEs in a Banach space 
from \eqref{UVd}, \eqref{Wd}, \eqref{Zd} and \eqref{qd}, 
we must obtain expressions for $P_j(t,y)$, $S_j(t,y)$, 
$\px P_j(t,y)$, and $\px S_j(t,y)$, $j=1,2$, 
in terms of $(U,V,W,Z,q)$.

\begin{proposition}\label{Pj}
Suppose that $y(t, \cdot)$ is a strictly monotone 
increasing function that is 
a bijection from $\mathbb{R}$ to itself.
Introduce the exponential function
\begin{equation}\label{E}
	\mathcal{E}(t,\xi,\eta)
	=\exp\left(-\left|\int_\eta^\xi
	\left(q\cos^2\frac{W}{2}
	\cos^2\frac{Z}{2}\right)(t,s)\,ds\right|\right),
	\quad t,\xi,\eta\in\mathbb{R},
\end{equation}
as well as
\begin{equation}\label{p1q1}
	\begin{split}
		&p_1(t,\xi)=q(t,\xi)\left(
		U^2V
		\cos^2\frac{W}{2}
		\cos^2\frac{Z}{2}
		+\frac{1}{4}U\sin W\sin Z
		+\frac{1}{2}V\sin^2\frac{W}{2}
		\cos^2\frac{Z}{2}
		\right)(t,\xi),
		\\ & 
		s_1(t,\xi)=q(t,\xi)
		\left(UV^2
		\cos^2\frac{W}{2}
		\cos^2\frac{Z}{2}
		+\frac{1}{2}U\cos^2\frac{W}{2}
		\sin^2\frac{Z}{2}
		+\frac{1}{4}V\sin W\sin Z
		\right)(t,\xi),
	\end{split}
\end{equation}
and
\begin{equation}\label{p2q2}
	\begin{split}
		& p_2(t,\xi)=\left(q\sin^2\frac{W}{2}
		\sin Z\right)(t,\xi),\\
		& s_2(t,\xi)=\left(q\sin W\sin^2\frac{Z}{2}
		\right)(t,\xi).
	\end{split}
\end{equation}
Then, the nonlocal terms $P_j$, $\px P_j$, $S_j$, 
and $\px S_j$, $j=1,2$, can be expressed in the following 
form, as per \cite[Equation (3.3)]{HQ21} (we slightly deviate 
from the notation by writing, for instance, 
$P_1(t,\xi)$ instead of $P_1(t,y(t,\xi))$):
\begin{align}\label{P_1}
	&P_1(t,\xi)=\frac{1}{2}
	\int_{-\infty}^{\infty}
	\mathcal{E}(t,\xi,\eta)
	p_1(t,\eta)\,d\eta,\\
	\label{pxP_1}
	&(\px P_1)(t,\xi)=\frac{1}{2}
	\left(
	\int_{\xi}^{\infty}
	-\int_{-\infty}^{\xi}
	\right)
	\mathcal{E}(t,\xi,\eta)
	p_1(t,\eta)\,d\eta,
\end{align}
\begin{align}
	\nonumber
	&P_2(t,\xi)=\frac{1}{8}
	\int_{-\infty}^{\infty}
	\mathcal{E}(t,\xi,\eta)
	p_2(t,\eta)\,d\eta,\\
	\label{pxP_2}
	&(\px P_2)(t,\xi)=\frac{1}{8}
	\left(
	\int_{\xi}^{\infty}
	-\int_{-\infty}^{\xi}
	\right)
	\mathcal{E}(t,\xi,\eta)
	p_2(t,\eta)\,d\eta,
\end{align}
and
\begin{align*}
	&S_1(t,\xi)=\frac{1}{2}\int_{-\infty}^{\infty}
	\mathcal{E}(t,\xi,\eta)
	s_1(t,\eta)\,d\eta,\\
	&(\px S_1)(t,\xi)=\frac{1}{2}
	\left(
	\int_{\xi}^{\infty}
	-\int_{-\infty}^{\xi}
	\right)
	\mathcal{E}(t,\xi,\eta)
	s_1(t,\eta)\,d\eta
\end{align*}
\begin{align*}
	&S_2(t,\xi)=\frac{1}{8}
	\int_{-\infty}^{\infty}
	\mathcal{E}(t,\xi,\eta)
	s_2(t,\eta)\,d\eta,\\
	&(\px S_2)(t,\xi)=\frac{1}{8}
	\left(
	\int_{\xi}^{\infty}
	-\int_{-\infty}^{\xi}
	\right)
	\mathcal{E}(t,\xi,\eta)
	s_2(t,\eta)\,d\eta.
\end{align*}
\end{proposition}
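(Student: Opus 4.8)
The plan is to realize the operator $(1-\px^2)^{-1}$ as convolution against its Green's function $\tfrac12 e^{-|\cdot|}$ and then transport every integral through the change of variables $x'=y(t,\eta)$. Since $(1-\px^2)^{-1}f=\tfrac12\int_{-\infty}^\infty e^{-|x-x'|}f(x')\,dx'$, I would first write, along the characteristic,
\begin{equation*}
	P_1(t,y(t,\xi))=\frac12\int_{-\infty}^\infty e^{-|y(t,\xi)-x'|}
	\left(u^2v+u(\px u)\px v+\tfrac12 v(\px u)^2\right)(t,x')\,dx',
\end{equation*}
and analogously for $P_2,S_1,S_2$ with their densities from \eqref{P12}--\eqref{S12}.

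Next I would substitute $x'=y(t,\eta)$. Because $y(t,\cdot)$ is by hypothesis a strictly increasing bijection of $\mathbb{R}$, this substitution is legitimate, with Jacobian $dx'=\pxi y(t,\eta)\,d\eta$; by \eqref{pxiy} one has $\pxi y=q\cos^2\tfrac W2\cos^2\tfrac Z2$. The exponential kernel transforms exactly into $\mathcal{E}(t,\xi,\eta)$ of \eqref{E}, because $y(t,\xi)-y(t,\eta)=\int_\eta^\xi\left(q\cos^2\tfrac W2\cos^2\tfrac Z2\right)(t,s)\,ds$ by \eqref{ydiff}.

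The core of the argument is to rewrite each density, multiplied by the Jacobian $q\cos^2\tfrac W2\cos^2\tfrac Z2$, purely in terms of $(U,V,W,Z)$. Here I would use \eqref{UV} (so that $u=U$, $v=V$ along the characteristic) together with the consequences of \eqref{trig},
\begin{equation*}
	(\px u)\cos^2\tfrac W2=\tfrac12\sin W,\qquad
	(\px u)^2\cos^2\tfrac W2=\sin^2\tfrac W2,
\end{equation*}
and their $Z$-counterparts for $\px v$. For instance, $u(\px u)(\px v)\cdot q\cos^2\tfrac W2\cos^2\tfrac Z2=\tfrac14 qU\sin W\sin Z$ and $\tfrac12 v(\px u)^2\cdot q\cos^2\tfrac W2\cos^2\tfrac Z2=\tfrac12 qV\sin^2\tfrac W2\cos^2\tfrac Z2$, which, together with the $u^2v$ term, assemble into the three summands of $p_1$ in \eqref{p1q1}; the density $(\px u)^2\px v$ of $P_2$ collapses to $q\sin^2\tfrac W2\cdot\tfrac12\sin Z=\tfrac12 p_2$ with $p_2=q\sin^2\tfrac W2\sin Z$, so that the prefactor of $P_2$ becomes $\tfrac18$, being the product of three factors $\tfrac12$ (the constant in the definition of $P_2$, the convolution constant, and the identity $(\px v)\cos^2\tfrac Z2=\tfrac12\sin Z$). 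The formulas for $s_1,s_2$ follow by the symmetric computation interchanging $(U,W)$ with $(V,Z)$.

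Finally, for the $x$-derivatives I would differentiate the kernel before substituting, using $\px e^{-|x-x'|}=-\sgn{x-x'}\,e^{-|x-x'|}$. After the change of variables the sign factor becomes $\sgn{y(t,\xi)-y(t,\eta)}=\sgn{\xi-\eta}$, precisely because $y(t,\cdot)$ is strictly increasing; writing $-\sgn{\xi-\eta}$ as $+1$ on $\{\eta>\xi\}$ and $-1$ on $\{\eta<\xi\}$ converts the integral into the difference $\int_\xi^\infty-\int_{-\infty}^\xi$ appearing in \eqref{pxP_1}, \eqref{pxP_2} and their $S$-analogues. I expect no genuine obstacle: the only points requiring care are the justification of differentiation under the integral sign—secured by the uniform $L^1$ bounds on the densities from Section \ref{cls}—and the monotonicity of $y(t,\cdot)$ that turns $\sgn{y(t,\xi)-y(t,\eta)}$ into $\sgn{\xi-\eta}$. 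Everything else is the bookkeeping of the trigonometric identities above.
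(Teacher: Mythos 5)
Your proposal is correct and follows essentially the same route as the paper: write $(1-\px^2)^{-1}$ as convolution with $\tfrac12 e^{-|\cdot|}$, substitute $z=y(t,\eta)$ using the bijectivity hypothesis, absorb the Jacobian $\pxi y=q\cos^2\tfrac W2\cos^2\tfrac Z2$ into the densities via the identities \eqref{trig}, and identify the exponent with $\mathcal{E}$ through \eqref{ydiff}; the sign-splitting for the derivatives is likewise exactly what the paper does. Your bookkeeping of the constants (in particular the $\tfrac18$ in front of $P_2$) checks out.
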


\begin{proof}
The nonlocal term $P_1(t,y)$ 
can be written as follows:
\begin{equation}\label{P_1-1}
	\begin{split}
		P_1(t,y)&=\frac{1}{2}\int_{-\infty}^{\infty}
		e^{-|y(t,\xi)-z|}
		\left(
		u^2v+u(\px u)\px v+\frac{1}{2}v(\px u)^2
		\right)(t,z)\,dz\\
		&=\frac{1}{2}\int_{-\infty}^{\infty}
		e^{-|y(t,\xi)-y(t,\eta)|}
		\left(q\left(
		U^2V\cos^2\frac{W}{2}
		\cos^2\frac{Z}{2}\right.\right.\\
		&\left.\left.\qquad\qquad\qquad\qquad\qquad\qquad\,\,
		+\frac{1}{4}U\sin W\sin Z
		+\frac{1}{2}V\sin^2\frac{W}{2}
		\cos^2\frac{Z}{2}
		\right)\right)(t,\eta)
		\,d\eta,
	\end{split}
\end{equation}
where we have executed the change of variables 
$z = y(t,\eta)$, which is justified by the assumption 
on $y(t,\cdot)$. Additionally, we have applied the 
equations \eqref{trig} and \eqref{pxiy}.
Substituting the right-hand side of \eqref{ydiff} into 
the exponent of \eqref{P_1-1}, and using the 
notations from \eqref{E} and \eqref{p1q1}, 
we derive \eqref{P_1}.

Observing that
\begin{equation*}
	(\px P_1)(t,y)
	=\frac{1}{2}
	\left(
	\int_{y(t,\xi)}^{\infty}
	-\int_{-\infty}^{y(t,\xi)}
	\right)
	e^{-|y(t,\xi)-z|}
	\left(
	u^2v+u(\px u)\px v+\frac{1}{2}v(\px u)^2
	\right)(t,z)\,dz,
\end{equation*}
we can reason along the lines of the calculations 
for $P_1$ to derive \eqref{pxP_1}.

Arguing in a similar manner, we can derive the 
expressions for $P_2, \px P_2$ and $S_j,\px S_j$, $j=1,2$, 
as specified in the proposition.
\end{proof}

Combining \eqref{UVd}, \eqref{Wd}, \eqref{Zd}, and \eqref{qd}, 
we obtain the following ODE system in a Banach space 
(see \cite[Equations (4.1)—(4.2)]{HQ21}):
\begin{equation}\label{ODE}
	\begin{split}
		&\pt U=-\px P_1-P_2,\\
		&\pt V=-\px S_1-S_2,\\
		&\pt W=
		2U^2V\cos^2\frac{W}{2}-V\sin^2\frac{W}{2}
		-2\left(P_1+\px P_2\right)\cos^2\frac{W}{2},\\
		&\pt Z=
		2UV^2\cos^2\frac{Z}{2}-U\sin^2\frac{Z}{2}
		-2\left(S_1+\px S_2\right)\cos^2\frac{Z}{2},\\
		&\pt q=
		q\left(U^2V+\frac{1}{2}V-P_1-\px P_2\right)
		\sin W
		+q\left(UV^2+\frac{1}{2}U
		-S_1-\px S_2\right)\sin Z,
	\end{split}
\end{equation}
subject to the initial data (see \eqref{y0}, 
\eqref{UV}, \eqref{WZ} and \eqref{q})
\begin{equation}\label{id}
	\begin{split}
		&U_0(\xi):= U(0,\xi)=u_0(y_0(\xi)),\quad
		V_0(\xi):= V(0,\xi)=v_0(y_0(\xi)),\\
		&W_0(\xi):= W(0,\xi)
		=2\arctan(\px u_0)(y_0(\xi)),\quad
		Z_0(\xi):= Z(0,\xi)
		=2\arctan(\px v_0)(y_0(\xi)),\\
		& q_0(\xi):= q(0,\xi)=1,
	\end{split}
\end{equation}
where $P_j$, $\px P_j$, $S_j$, $\px S_j$, $j=1,2$, are 
given in Proposition \ref{Pj}, and we 
assume that $W(0,\xi), Z(0,\xi)\in[-\pi,\pi]$.

\begin{remark}
We assume the hypothesis of Proposition \ref{Pj}, which 
guarantees the validity of substituting the variable $y(t,\cdot)$ 
with $z$. This assumption will be confirmed later, as 
detailed in Proposition \ref{propy}.
\end{remark}

\begin{remark}\label{Rpi}
The vector field defined on the right-hand side of \eqref{ODE} 
is invariant under adding multiples of $2\pi$ to 
either $W$ or $Z$. Consequently, we can permit $W$ and $Z$ 
to assume values beyond the interval $[-\pi,\pi]$ 
by employing periodic extension.
\end{remark}

\section{Local existence and uniqueness of ODE system}\label{Gsl}

In \cite{HQ21}, the authors consider 
initial data $u_0,v_0$ from the space $H^1\cap W^{1,4}$. 
This choice immediately facilitates the consideration 
of \eqref{ODE}--\eqref{id} as an ODE system in 
the following Banach space (see the space $Y$ 
described in \cite[Section IV]{HQ21}:
\begin{equation}
	\label{EB}
	E=\left(H^{1}(\mathbb{R})
	\cap W^{1,4}(\mathbb{R})\right)^2\times
	\left(L^2(\mathbb{R})
	\cap L^{\infty}(\mathbb{R})\right)^2\times
	L^{\infty}(\mathbb{R}),
\end{equation}
equipped with the norm
\begin{equation*}
	\|(U,V,W,Z,q)\|_{E}=\|U\|_{H^1\cap W^{1,4}}
	+\|V\|_{H^1\cap W^{1,4}}
	+\|W\|_{L^2\cap L^{\infty}}
	+\|Z\|_{L^2\cap L^{\infty}}
	+\|q\|_{L^{\infty}}.
\end{equation*}

For initial data $(u_0, v_0)$ in $\Sigma$, as defined in \eqref{Sig}, 
which is the subject of this paper, neither $\partial_x u_0$ 
nor $\partial_x v_0$ is generally in $L^4$.
Nevertheless, taking into 
account that (see \eqref{y0})
$$
\pxi y_0(\xi)=\frac{1}
{\left(1+(\px u_0)^2\right)
\left(1+(\px v_0)^2\right)(y_0(\xi))},
$$
we can bound the $L^4$ norm of $\pxi U_0$ by the $L^2$ 
norm of $\px u_0$ as follows (recall that 
$U_0(\xi)=u_0(y_0(\xi))$):
\begin{equation}\label{U-L4}
	\begin{split}
	\int_{-\infty}^{\infty}
	(\pxi U_0)^4\,d\xi&=
	\int_{-\infty}^{\infty}
	\left(\px u_0(y_0)\right)^4
	(\pxi y_0)^4\,d\xi
	\leq
	\int_{-\infty}^{\infty}
	\left(\px u_0(y_0)\right)^4
	(\pxi y_0)^2\,d\xi\\
	&=\int_{-\infty}^{\infty}
	\frac{(\px u_0)^4(x)}
	{\left(1+(\px u_0)^2\right)
	\left(1+(\px v_0)^2\right)(x)}\,dx\\
	&\leq
	\int_{-\infty}^{\infty}
	\frac{(\px u_0)^2(x)}
	{\left(1+(\px v_0)^2\right)(x)}\,dx
	\leq
	\int_{-\infty}^{\infty}
	(\px u_0)^2(x)\,dx.
	\end{split}
\end{equation}
Here, we have utilized the fact that $|\pxi y_0|\leq 1$ 
in the first inequality. Similarly, we can argue 
for $\pxi V_0$ to conclude that $U_0, V_0 \in H^1 
\cap W^{1,4}$, even though the components $u_0$ 
and $v_0$ of $(u_0, v_0) \in \Sigma$ 
may not belong to $W^{1,4}$. Therefore, for our class of 
initial data $(u_0, v_0) \in \Sigma$, we can consider the 
Cauchy problem \eqref{ODE}-\eqref{id} in the same 
Banach space $E$ as in \cite{HQ21}.
For completeness and 
future reference, we present here the local well-posedness 
analysis of the ODE system. The proofs 
are found in Appendix \ref{AS4}.

Considering that the initial data $W(0,\xi)$ and $Z(0,\xi)$ 
take values in $[-\pi,\pi]$, and $q(0,\xi)=1$, it suffices 
to study the solutions of the Cauchy problem \eqref{ODE} 
and \eqref{id} within the following closed subset 
$\Omega\subset E$ (see the set $\Lambda$ 
in \cite[Section IV]{HQ21}):
\begin{equation}\label{Om}
	\begin{split}
		\Omega= & \Bigl\{
		(U,V,W,Z,q)\in E:
		\|U\|_{H^1\cap W^{1,4}},\|V\|_{H^1\cap W^{1,4}}
		\leq R_1,
		\,\,
		\|W\|_{L^2},\|Z\|_{L^2}\leq R_2,\\
		&
		\qquad\qquad\qquad\qquad\quad\,\,\,
		\|W\|_{L^{\infty}},\|Z\|_{L^{\infty}}
		\leq \frac{3\pi}{2},
		\,\,
		q^-\leq q(\xi)\leq q^+,
		\,\,\xi\in\mathbb{R}
		\Bigr\},
	\end{split}
\end{equation}
for some $R_1,R_2,q^-,q^+>0$.
Notice that
\begin{equation}\label{WZ-L4}
	\|W\|_{L^4}\leq\|W\|_{L^\infty}^{1/2}
	\|W\|_{L^2}^{1/2}\leq
	\left(\frac{3\pi}{2}R_2\right)^{1/2}\quad
	\mbox{and}
	\quad \|Z\|_{L^4}\leq
	\left(\frac{3\pi}{2}R_2\right)^{1/2},
\end{equation}
as soon as $W$ and $Z$ meet the 
conditions specified in \eqref{Om}.

To demonstrate the local existence and uniqueness 
of the solution to the ODE system 
\eqref{ODE} and \eqref{id}, it suffices to 
verify the following conditions:
\begin{enumerate}[(i)]
	\item	 For all $(U,V,W,Z,q)\in\Omega$, the right-hand 
	side of \eqref{ODE} belongs to the set $E$.
	
	\item The right-hand side of \eqref{ODE} is Lipschitz 
	continuous on the domain $\Omega$.
\end{enumerate}

We begin by proving a key technical lemma, which 
shows that the exponential term $\mathcal{E}$, which 
is part of the nonlocal terms $P_j$, $\px P_j$, $S_j$, 
and $\px S_j$, $j=1,2$ (as outlined in Proposition \ref{Pj}), 
decays exponentially quickly as $|\xi-\eta|\to\infty$.
\begin{lemma}[\cite{BC07},\cite{HQ21}]\label{L1}
	Introduce the function
	\begin{equation}\label{G}
		\Gamma(\xi)=\exp\left(
		\frac{q^-}{4}(R_2^2-|\xi|)
		\right),\quad \xi\in\mathbb{R}.
	\end{equation}
	Then the following estimate holds for 
	all $(U,V,W,Z,q)(t,\cdot)\in\Omega$:
	\begin{equation}\label{E-est}
		\mathcal{E}(t,\xi,\eta)\leq\Gamma(\xi-\eta),
		\quad \xi,\eta\in\mathbb{R},
	\end{equation}
	where $\mathcal{E}(t,\xi,\eta)$ 
	is given by \eqref{E}.
\end{lemma}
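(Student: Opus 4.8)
The plan is to prove the pointwise bound $\mathcal{E}(t,\xi,\eta)\leq\Gamma(\xi-\eta)$ by showing that the exponent in the definition \eqref{E} of $\mathcal{E}$ grows at least linearly in $|\xi-\eta|$. Recall that
$$
\mathcal{E}(t,\xi,\eta)
=\exp\left(-\left|\int_\eta^\xi
\left(q\cos^2\tfrac{W}{2}\cos^2\tfrac{Z}{2}\right)(t,s)\,ds\right|\right),
$$
so it suffices to bound the integral $\left|\int_\eta^\xi\left(q\cos^2\tfrac{W}{2}\cos^2\tfrac{Z}{2}\right)(t,s)\,ds\right|$ from below by a quantity of the form $\tfrac{q^-}{4}(|\xi-\eta|-R_2^2)$, since then exponentiating the negative of that lower bound yields exactly $\Gamma(\xi-\eta)=\exp\!\left(\tfrac{q^-}{4}(R_2^2-|\xi-\eta|)\right)$.

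The integrand is nonnegative, and on $\Omega$ we have the uniform lower bound $q(t,s)\geq q^-$. The only obstacle to a clean linear lower bound is that the factor $\cos^2\tfrac{W}{2}\cos^2\tfrac{Z}{2}$ can be small or vanish on the set where $|W|$ or $|Z|$ is near $\pi$. The key idea is that this ``bad set'' has controlled measure: using the elementary inequality $\cos^2\tfrac{W}{2}=\tfrac{1+\cos W}{2}\geq\tfrac{1}{4}$ whenever $|W|\leq\tfrac{2\pi}{3}$ (and likewise for $Z$), I would split the interval of integration into the ``good'' portion where both $\cos^2\tfrac{W}{2}\geq\tfrac14$ and $\cos^2\tfrac{Z}{2}\geq\tfrac14$ hold, and the ``bad'' portion where $|W|>\tfrac{2\pi}{3}$ or $|Z|>\tfrac{2\pi}{3}$. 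On the bad set, Chebyshev's inequality together with the constraint $\|W\|_{L^2},\|Z\|_{L^2}\leq R_2$ from \eqref{Om} controls its measure: since $|W|>\tfrac{2\pi}{3}$ forces $W^2>\tfrac{4\pi^2}{9}$, the measure of $\{s:|W(t,s)|>\tfrac{2\pi}{3}\}$ is at most $\tfrac{9}{4\pi^2}R_2^2$, and similarly for $Z$. Thus the total measure of the bad set is bounded by a constant multiple of $R_2^2$.

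Combining these observations, the integral over $[\eta,\xi]$ is at least $q^-\cdot\tfrac{1}{16}$ times the measure of the good portion, which is at least $|\xi-\eta|$ minus the (bounded) measure of the bad set. After tracking the numerical constants one obtains a bound of the form $\int\geq c\,q^-\bigl(|\xi-\eta|-C R_2^2\bigr)$; absorbing the constants to match the precise coefficients $\tfrac{q^-}{4}$ and $R_2^2$ in \eqref{G} (which is where the specific thresholds and the factor $\tfrac14$ must be chosen carefully) completes the estimate. The main obstacle is purely this bookkeeping of constants: one must select the threshold for the ``bad set'' and the floor for $\cos^2$ so that the product of the measure-bound constant and the $\cos^2$ floor reproduces the exact exponent in $\Gamma$; conceptually nothing is delicate, since the statement is quoted from \cite{BC07} and \cite{HQ21} and the argument is a standard Chebyshev-plus-splitting estimate.
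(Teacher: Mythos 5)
Your strategy is exactly the paper's: bound the exponent from below by splitting $[\eta,\xi]$ into a ``good'' set where $\cos^2\frac{W}{2}\cos^2\frac{Z}{2}$ has a uniform positive floor and a ``bad'' set whose measure is controlled by the $L^2$ constraint $\|W\|_{L^2},\|Z\|_{L^2}\leq R_2$ from \eqref{Om}, then use $q\geq q^-$. However, with your specific threshold the constants do \emph{not} work out, and this is not mere bookkeeping. Taking the good set to be $\{|W|\leq\frac{2\pi}{3}\}\cap\{|Z|\leq\frac{2\pi}{3}\}$ gives each factor $\cos^2\geq\frac14$, hence a product floor of $\frac{1}{16}$, and your lower bound for the integral is $\frac{q^-}{16}\bigl(|\xi-\eta|-CR_2^2\bigr)$. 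Exponentiating yields a majorant with decay rate $e^{-\frac{q^-}{16}|\xi-\eta|}$, which is \emph{strictly weaker} than $\Gamma(\xi-\eta)=\exp\bigl(\frac{q^-}{4}(R_2^2-|\xi-\eta|)\bigr)$ for large $|\xi-\eta|$: a slower exponential rate cannot be ``absorbed'' into a faster one by adjusting the additive constant $R_2^2$. So as written you prove a different (weaker) estimate, not \eqref{E-est}.

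The fix is to choose the threshold so that \emph{each} cosine factor is at least $\frac12$, i.e.\ take $D_1=\{s:|W(s)/2|\leq\frac{\pi}{4}\}$ and $D_2=\{s:|Z(s)/2|\leq\frac{\pi}{4}\}$, so the product floor on $D_1\cap D_2$ is $\frac14$ and the rate $\frac{q^-}{4}$ comes out exactly. For the bad set, your Chebyshev argument still works ($|W|>\frac{\pi}{2}$ forces $W^2>\frac{\pi^2}{4}$, so its measure is at most $\frac{4}{\pi^2}R_2^2<\frac{R_2^2}{2}$); the paper instead uses that on the complement of $D_1$ one has $1\leq 2\sin^2\frac{W}{2}\leq\frac{W^2}{2}$ (valid because $\|W\|_{L^\infty}\leq\frac{3\pi}{2}$ keeps $|W/2|\leq\frac{3\pi}{4}$), giving the clean bound $\frac{R_2^2}{2}$ per variable and hence $\mathrm{meas}(D_1\cap D_2)\geq|\xi-\eta|-R_2^2$, which reproduces \eqref{G} exactly. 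With that one change of threshold your argument is complete and coincides with the paper's.
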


\begin{proof}
	The proof is given in Appendix \ref{AS4}.
\end{proof}

\begin{remark}
	From \eqref{G}, we obtain
	\begin{equation}\label{G-e}
		\|\Gamma\|_{L^1}=\frac{8}{q^-}
		\exp\left(\frac{q^-}{4}R_2^2\right),
	\end{equation}
	and
	\begin{equation}\label{mG-e}
		\|\xi\Gamma(\xi)\|_{L^1}=\frac{32}{(q^-)^2}
		\exp\left(\frac{q^-}{4}R_2^2\right).
	\end{equation}
\end{remark}

Applying Lemma \ref{L1}, we will show 
that the nonlocal terms in \eqref{ODE} 
are elements of the space $H^1\cap W^{1,4}$.

\begin{proposition}\label{POm}
	Suppose that $(U,V,W,Z,q)(t,\cdot)\in\Omega$.
	Then
	$$
	\left(P_j, \px P_j, S_j,
	\px S_j\right)(t,\cdot)
	\in \left(H^1\cap W^{1,4}\right)^4,\quad j=1,2, 
	$$
	where
	$P_j$, $\px P_j$, $S_j$ and $\px S_j$, $j=1,2$, 
	are given in Proposition \ref{Pj}.
\end{proposition}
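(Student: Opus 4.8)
The plan is to show each of $P_j, \px P_j, S_j, \px S_j$ lies in $H^1 \cap W^{1,4}$ by exploiting the representation formulas from Proposition \ref{Pj}, which express each nonlocal term as a convolution-type integral $\tfrac{1}{c}\int \mathcal{E}(t,\xi,\eta)\, p_j(t,\eta)\,d\eta$ (or the corresponding signed integral for the $\px$-versions), together with the exponential decay estimate $\mathcal{E}(t,\xi,\eta) \le \Gamma(\xi-\eta)$ from Lemma \ref{L1}. I would treat $P_1$ and $P_2$ in detail and remark that $S_1, S_2$ follow by the symmetric roles of $U,V$ and $W,Z$. The first step is to bound the densities $p_1, p_2$ (defined in \eqref{p1q1}--\eqref{p2q2}) in the relevant Lebesgue spaces: since $(U,V,W,Z,q) \in \Omega$, we have $q \le q^+$, the trigonometric factors $\cos^2, \sin^2, \sin W, \sin Z$ are bounded by $1$, and $\|U\|_{L^\infty}, \|V\|_{L^\infty}$ are controlled via the Sobolev embedding $H^1 \hookrightarrow L^\infty$ by $R_1$. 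This shows $p_1, p_2 \in L^1 \cap L^2 \cap L^4$, with norms bounded by constants depending only on $R_1, q^+$, using that $U,V \in L^2 \cap L^\infty$ and interpolating.

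The second step is to estimate the nonlocal terms themselves. For the $L^p$ bound (with $p \in \{2,4\}$) I would write $P_1$ as $\tfrac12 (\mathcal{E} \ast p_1)$-type object and apply Young's convolution inequality in the form $\|\Gamma \ast p_1\|_{L^p} \le \|\Gamma\|_{L^1}\|p_1\|_{L^p}$, using \eqref{G-e} to control $\|\Gamma\|_{L^1}$; this gives $P_1, P_2 \in L^2 \cap L^4$. The same argument applied to the signed-integral formulas \eqref{pxP_1}, \eqref{pxP_2} yields $\px P_1, \px P_2 \in L^2 \cap L^4$, since the integrands are dominated by the same $\Gamma(\xi-\eta)|p_j(\eta)|$. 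To obtain membership in $H^1 \cap W^{1,4}$ rather than merely $L^2 \cap L^4$, I must also control the next derivative, i.e.\ $\px^2 P_j$. Here I would differentiate the representation formula for $\px P_j$ once more in $\xi$; differentiating the signed integral produces a boundary-type term from the two inner limits evaluated at $\eta = \xi$ (which combine to give a multiple of $p_j(\xi)$ through $\mathcal{E}(t,\xi,\xi)=1$) plus an integral term of the same convolution structure. Concretely, one recovers the pointwise relation $\px^2 P_j = P_j - (\text{density})$ coming from $(1-\px^2)P_j = $ density, so $\px^2 P_j \in L^2 \cap L^4$ follows from the $L^2\cap L^4$ bounds on $P_j$ and on $p_j$ already established.

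The cleanest route, which I would actually adopt, is to invoke the fact that $P_j = (1-\px^2)^{-1}(\text{density})$ where the density is $\tfrac12(U^2 V + \dots)$ pulled back through the change of variables; equivalently, from the convolution kernel $\tfrac12 e^{-|\cdot|}$ one has the elliptic identity $-\px^2 P_j + P_j = \text{density}_j$ at the level of the $x$-variable, and the regularity $H^1 \cap W^{1,4}$ of $P_j$ is equivalent to the density lying in $L^2 \cap L^4$, which is precisely what the first step supplies. I would phrase the estimates in the $\xi$-variable using $\mathcal{E}$ and Lemma \ref{L1} to stay consistent with the paper's formalism, but the underlying mechanism is the smoothing property of $(1-\px^2)^{-1}$ gaining two derivatives. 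The main obstacle I anticipate is the derivative term: care is needed to justify differentiating under the integral sign in \eqref{pxP_1}--\eqref{pxP_2} and to track the jump contribution at $\eta=\xi$ correctly, confirming that the two pieces $\int_\xi^\infty$ and $-\int_{-\infty}^\xi$ generate exactly the density term $p_j(\xi)$ (up to the normalizing constant) so that the second-derivative bound closes. Once that identity is pinned down, all four quantities for each of $P$ and $S$ land in $H^1 \cap W^{1,4}$ with norms controlled by $R_1, R_2, q^+$ and $\|\Gamma\|_{L^1}$.
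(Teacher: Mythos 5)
Your first route --- dominate $\mathcal{E}(t,\xi,\eta)$ by $\Gamma(\xi-\eta)$ via Lemma \ref{L1}, bound the densities $p_j,s_j$ in $L^2\cap L^4$ using the $\Omega$-bounds (with $|\sin W|\le|W|$, $|\sin Z|\le|Z|$ supplying the decay), apply Young's inequality with $\|\Gamma\|_{L^1}$ from \eqref{G-e}, and then differentiate the signed integral in $\xi$ to pick up a local term proportional to $p_j(\xi)$ plus another integral of the same convolution type --- is exactly the paper's proof (see \eqref{xipxP_2} and the surrounding estimates in Appendix~A). Had you stopped there, the proposal would be essentially identical to the paper's argument and correct; the only small caveat is that differentiating $\mathcal{E}$ in $\xi$ produces an extra factor $\pxi y(\xi)=\bigl(q\cos^2\frac{W}{2}\cos^2\frac{Z}{2}\bigr)(\xi)\le q^+$ inside the integral term, which is harmless.

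The ``cleanest route'' that you say you would actually adopt, however, contains a genuine gap: it conflates regularity in $x$ with regularity in $\xi$. The proposition asserts membership of $P_j(t,\cdot)$, $\px P_j(t,\cdot)$, etc.\ in $H^1\cap W^{1,4}$ \emph{as functions of $\xi$} (they are components of the right-hand side of \eqref{ODE} in the space $E$), so the derivative you must control is $\pxi\bigl(\px P_j(t,y(t,\xi))\bigr)$, not $\px^2 P_j$. The elliptic identity $(1-\px^2)P_2=\frac12(\px u)^2\px v$ lives in the $x$-variable, and its right-hand side is only known to be in $L^1(dx)$ under the standing hypotheses (that is the whole point of invoking the conservation law \eqref{hc} in Section \ref{cls}); it is certainly not in $L^2\cap L^4$ in $x$, so the claimed ``equivalence'' with the $L^2\cap L^4$ bounds on $p_j$ from your first step does not hold --- those bounds are in $\xi$. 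What rescues the argument is precisely the chain-rule Jacobian: $\pxi(\px P_2)=(\px^2P_2)(t,y)\,\pxi y$, and $\bigl((\px u)^2\px v\bigr)\pxi y=\frac12\,q\sin^2\frac{W}{2}\sin Z=\frac12 p_2$, which is bounded and in $L^2\cap L^4$ of $\xi$. So the local term is tamed only after the change of variables, which is exactly what the direct differentiation of \eqref{pxP_1}--\eqref{pxP_2} accomplishes and what the shortcut skips.
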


\begin{proof}
	The proof is presented in Appendix \ref{AS4}.
\end{proof}

The above proposition suggests that the 
right-hand side of \eqref{ODE} maps, for every fixed $t$, 
an element from $\Omega$ to $E$, thereby proving item (i). 
The Lipschitz property (ii) of this map in $\Omega$ is 
established by the subsequent proposition.

\begin{proposition}\label{LipP}
	Let $P_{jk}, \px P_{jk}, S_{jk}$ and $\px S_{jk}$, 
	where $j,k=1,2$, be defined as $P_j, \px P_j, S_j$ 
	and $\px S_j$ in Proposition \ref{Pj}, respectively, but 
	with $(U_k,V_k,W_k,Z_k,q_k)$ instead of $(U,V,W,Z,q)$.
	Introduce (here and below, the argument $t$ is dropped):
	\begin{equation}\label{Nj}
		\begin{split}
			N_j=&\max\bigl\{
			\|P_{j1}-P_{j2}\|_{H^1\cap W^{1,4}},
			\|\px P_{j1}-\px P_{j2}\|_{H^1\cap W^{1,4}},\\
			&\qquad\,\,\,\|S_{j1}-S_{j2}\|_{H^1\cap W^{1,4}},
			\|\px S_{j1}-\px S_{j2}\|_{H^1\cap W^{1,4}}
			\bigr\},\quad j=1,2,
		\end{split}
	\end{equation}
	Then there exists a constant 
	$L>0$, such that for all 
	$(U_k,V_k,W_k,Z_k,q_k)(t,\cdot)\in\Omega$, $k=1,2$, 
	we have
	\begin{equation}\label{Pj-L}
		\begin{split}
			N_j\leq
			L\|(U_1,V_1,W_1,Z_1,q_1)-(U_2,V_2,W_2,Z_2,q_2)\|_E,
			\quad j=1,2.
		\end{split}
	\end{equation}
\end{proposition}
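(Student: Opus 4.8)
The plan is to treat all eight nonlocal quantities $P_j,\px P_j,S_j,\px S_j$ ($j=1,2$) simultaneously, exploiting that by Proposition \ref{Pj} each has the common structure of a convolution-type integral of the kernel $\mathcal{E}$ against one of the local densities $p_1,p_2,s_1,s_2$ (with a sign split in the case of the $x$-derivatives). The first reduction I would make concerns the norm: since the elements of $E$ are functions of $\xi$, membership of $P_1$ in $H^1\cap W^{1,4}$ refers to $\xi\mapsto P_1(t,y(t,\xi))$, whose $\xi$-derivative is $(\px P_1)\,\pxi y$. On $\Omega$ the Jacobian $\pxi y=q\cos^2\frac{W}{2}\cos^2\frac{Z}{2}$ is bounded by $q^+$, and, thanks to the trigonometric identities \eqref{trig}, one has the algebraic relation $(\px^2 P_1)\,\pxi y=P_1\,q\cos^2\frac{W}{2}\cos^2\frac{Z}{2}-p_1$ (and its analogues for $S_1,P_2,S_2$), which is exactly what cancels the singular $\tan\frac{W}{2}$ contributions. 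Consequently the $H^1\cap W^{1,4}$ norm in $\xi$ of each nonlocal term is controlled by the $L^2\cap L^4\cap L^\infty$ norms of the nonlocal terms and of the densities $p_j,s_j$ alone, so it suffices to prove Lipschitz bounds of the form $\norm{\int\mathcal{E}\,p\,d\eta}_{L^p}\le C\,m$ for $p\in\{2,4,\infty\}$, where $m$ denotes the right-hand side of \eqref{Pj-L} without the constant.

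For each nonlocal term I would split the difference as $\int(\mathcal{E}_1p_{\cdot 1}-\mathcal{E}_2p_{\cdot 2})\,d\eta=\int\mathcal{E}_1(p_{\cdot 1}-p_{\cdot 2})\,d\eta+\int(\mathcal{E}_1-\mathcal{E}_2)\,p_{\cdot 2}\,d\eta$ and estimate the two pieces separately. For the \emph{density part} I would first show that $p_1,p_2,s_1,s_2$ are Lipschitz functions of $(U,V,W,Z,q)$ in $L^2$, $L^4$ and $L^\infty$: on $\Omega$ the factors $U,V$ are bounded in $L^\infty\cap L^2\cap L^4$ and $q$ is bounded in $L^\infty$, while the trigonometric building blocks $\cos^2\frac{W}{2},\sin^2\frac{W}{2},\sin W$ (and their $Z$-counterparts) have derivatives bounded by $1$ in $W,Z$. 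An add-and-subtract argument through the factors, together with the interpolation $\norm{W_1-W_2}_{L^4}\le\norm{W_1-W_2}_{L^2}^{1/2}\norm{W_1-W_2}_{L^\infty}^{1/2}$ (and likewise for $Z$), then yields $\norm{p_{\cdot 1}-p_{\cdot 2}}_{L^2\cap L^4\cap L^\infty}\le C\,m$. Bounding $\mathcal{E}_1\le\Gamma(\xi-\eta)$ by Lemma \ref{L1} and applying Young's inequality with $\norm{\Gamma}_{L^1}$ from \eqref{G-e} then controls the density part in every $L^p$.

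The principal work is the \emph{kernel part}. Writing $\mathcal{E}_k=e^{-A_k}$ with $A_k(\xi,\eta)=\abs{\int_\eta^\xi(q_k\cos^2\frac{W_k}{2}\cos^2\frac{Z_k}{2})\,ds}\ge0$, I would use the elementary inequality $\abs{e^{-A_1}-e^{-A_2}}\le e^{-\min(A_1,A_2)}\abs{A_1-A_2}\le\Gamma(\xi-\eta)\abs{A_1-A_2}$. The difference $\abs{A_1-A_2}$ is then bounded by integrating the difference of the integrands over the interval between $\eta$ and $\xi$, and splitting that difference into the part carrying $q_1-q_2$ (only bounded in $L^\infty$, producing a factor $\abs{\xi-\eta}$) and the parts carrying $W_1-W_2,Z_1-Z_2$ (bounded in $L^2$, producing a factor $\abs{\xi-\eta}^{1/2}$ after Cauchy--Schwarz), which gives $\abs{A_1-A_2}\le C\,m\,(\abs{\xi-\eta}+\abs{\xi-\eta}^{1/2})$. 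Hence the kernel part is dominated by the convolution of the weighted kernel $(\abs{\cdot}+\abs{\cdot}^{1/2})\Gamma(\cdot)$ against $\abs{p_{\cdot 2}}$; this weighted kernel is still integrable by \eqref{mG-e} (and the analogous, smaller $\abs{\xi}^{1/2}$-weight), so Young's inequality together with the boundedness of $p_{\cdot 2}$ in $L^2\cap L^4\cap L^\infty$ on $\Omega$ closes the estimate. I expect exactly this step — keeping the weighted kernel in $L^1$ so that the merely $L^\infty$ control of $q_1-q_2$ and $L^2$ control of $W_1-W_2,Z_1-Z_2$ do not destroy integrability — to be the main obstacle; the rest is bookkeeping of products of uniformly bounded factors. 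Collecting the $L^2,L^4,L^\infty$ bounds for all four densities and eight nonlocal terms, and using the Jacobian identity of the first paragraph to upgrade to the $H^1\cap W^{1,4}$ norm, yields \eqref{Pj-L}.
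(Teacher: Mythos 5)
Your proposal is correct and follows essentially the same route as the paper's proof in Appendix A: the same split of each difference into a density part and a kernel part, the bound $\mathcal{E}\leq\Gamma$ from Lemma \ref{L1} with Young's inequality \eqref{coest}, the inequality $|e^{-A_1}-e^{-A_2}|\leq\max\{e^{-A_1},e^{-A_2}\}|A_1-A_2|$ producing the weighted kernels $|\xi|\Gamma(\xi)$, $|\xi|^{1/2}\Gamma(\xi)\in L^1$, and the interpolation $\|W_1-W_2\|_{L^4}\leq\|W_1-W_2\|_{L^2}^{1/2}\|W_1-W_2\|_{L^\infty}^{1/2}$. The only (harmless) deviations are that you invoke Cauchy--Schwarz to trade $L^\infty$ control of $W_1-W_2$, $Z_1-Z_2$ in the kernel part for $L^2$ control with an $|\xi-\eta|^{1/2}$ weight, where the paper uses the $L^\infty$ norms directly (cf.~\eqref{I12-e}), and that you make explicit the second-derivative identity behind the paper's formula \eqref{xipxP_2} used to handle the $\pxi$-derivatives.
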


\begin{proof}
	The proof is given in Appendix \ref{AS4}.
\end{proof}

Finally, by employing the contraction 
principle, we deduce the following result 
(see 
\cite[Lemma 4.1]{HQ21} for a detailed proof):

\begin{theorem}[Local existence and 
	uniqueness of  \eqref{ODE}-\eqref{id}]\label{lwp}
	Suppose that $(u_0,v_0)\in\Sigma$, where 
	$(\Sigma,d_\Sigma)$ is defined by \eqref{Sig} and \eqref{d^2}.
	Then there exists a unique solution $(U,V,W,Z,q)$ 
	of the Cauchy problem \eqref{ODE}-\eqref{id} such that 
	\begin{equation*}
		(U,V,W,Z,q)\in C\left([-T,T],\Omega\right),
	\end{equation*}
	for some (small) $T>0$, where $\Omega$ 
	is given in \eqref{Om}.
\end{theorem}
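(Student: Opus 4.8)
The plan is to establish local existence and uniqueness for the ODE system \eqref{ODE}--\eqref{id} by casting it as an abstract Cauchy problem $\pt \Phi = F(\Phi)$, with $\Phi = (U,V,W,Z,q)$, in the Banach space $E$ defined in \eqref{EB}, and then invoking the standard Picard--Lindel\"of (contraction mapping) theorem for ODEs in Banach spaces. The two hypotheses required by that abstract theorem---namely that $F$ maps $\Omega$ into $E$ and that $F$ is Lipschitz continuous on $\Omega$---are precisely conditions (i) and (ii) stated in the text, which have already been verified: Proposition \ref{POm} shows that the nonlocal terms $P_j, \px P_j, S_j, \px S_j$ lie in $\left(H^1\cap W^{1,4}\right)$, so together with the algebraic (polynomial and trigonometric) terms in $U,V,W,Z,q$ the right-hand side of \eqref{ODE} indeed lands in $E$ for each fixed $t$, while Proposition \ref{LipP} supplies the Lipschitz estimate \eqref{Pj-L} for those nonlocal terms.

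First I would verify that $F$ is well-defined and locally Lipschitz on the full phase space by combining Proposition \ref{POm} and Proposition \ref{LipP} with elementary estimates on the remaining terms. The components $\pt U = -\px P_1 - P_2$ and $\pt V = -\px S_1 - S_2$ are handled directly by Proposition \ref{POm}. For the $W$- and $Z$-equations one must check that products such as $U^2 V\cos^2\tfrac{W}{2}$, $V\sin^2\tfrac{W}{2}$, and $\left(P_1+\px P_2\right)\cos^2\tfrac{W}{2}$ remain in $L^2\cap L^\infty$; this follows from the boundedness of the trigonometric factors, the control $\|U\|_{H^1\cap W^{1,4}}, \|V\|_{H^1\cap W^{1,4}}\leq R_1$ on $\Omega$, the Sobolev embedding $H^1\hookrightarrow L^\infty$, and the membership $P_1+\px P_2\in H^1\cap W^{1,4}$. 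For the $q$-equation one uses $q^-\leq q\leq q^+$ together with $|\sin W|, |\sin Z|\leq 1$ to keep $\pt q\in L^\infty$. Establishing that these algebraic expressions depend Lipschitz-continuously on $\Phi$ is routine: one writes differences, factors them, and bounds each factor using the uniform $\Omega$-bounds, the inequality \eqref{WZ-L4}, and the Lipschitz continuity of $\sin$, $\cos^2(\cdot/2)$ on bounded sets, then invokes \eqref{Pj-L} for the nonlocal pieces.

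With $F:\Omega\to E$ shown to be Lipschitz with some constant $\mathcal{L}>0$, the contraction argument proceeds in the usual way: one rewrites the Cauchy problem as the fixed-point equation $\Phi(t) = \Phi_0 + \int_0^t F(\Phi(s))\,ds$ on the space $C\left([-T,T],\Omega\right)$ and shows, for $T$ sufficiently small relative to $\mathcal{L}$ and to the ``room'' available inside $\Omega$ around the initial datum, that the integral operator is both a self-map of a suitable closed ball in $C\left([-T,T],\Omega\right)$ and a contraction. The Banach fixed-point theorem then yields a unique solution $\Phi\in C\left([-T,T],\Omega\right)$. A point deserving a short verification is that the flow keeps $(U,V,W,Z,q)$ inside the closed set $\Omega$ over the short time interval: since $q_0\equiv 1$ and $W_0, Z_0\in[-\pi,\pi]$, the bounds $\|W\|_{L^\infty},\|Z\|_{L^\infty}\leq \tfrac{3\pi}{2}$ and $q^-\leq q\leq q^+$ hold for the initial datum with strict slack, and by continuity in $t$ they persist on a short interval provided $R_1, R_2, q^\pm$ are chosen appropriately; this is what justifies restricting to $\Omega$ in the first place.

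The main obstacle---and the only genuinely substantive work---lies entirely in the two propositions already established, Proposition \ref{POm} and Proposition \ref{LipP}, whose proofs (deferred to Appendix \ref{AS4}) rest on the exponential decay estimate \eqref{E-est} of Lemma \ref{L1} for the kernel $\mathcal{E}(t,\xi,\eta)$. That decay is what converts the convolution-type integral representations in Proposition \ref{Pj} into genuine $H^1\cap W^{1,4}$ bounds with constants controlled by \eqref{G-e} and \eqref{mG-e}; without it the nonlocal terms would not be Lipschitz in the $W^{1,4}$ norm. Once conditions (i) and (ii) are in hand, the remaining contraction argument is standard and, as the text notes, parallels \cite[Lemma 4.1]{HQ21}; I would therefore cite that reference for the mechanical details rather than reproduce them. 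I expect no conceptual difficulty beyond ensuring the quantitative compatibility of $T$ with the $\Omega$-parameters so that the contraction constant is strictly less than one and the self-map property holds simultaneously.
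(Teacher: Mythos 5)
Your proposal is correct and follows essentially the same route as the paper: the paper likewise reduces the theorem to conditions (i) and (ii), established via Propositions \ref{POm} and \ref{LipP} (resting on Lemma \ref{L1}), and then invokes the contraction principle on $C([-T,T],\Omega)$, deferring the mechanical fixed-point details to \cite[Lemma 4.1]{HQ21}. Your additional remarks on the algebraic terms and on the flow remaining in $\Omega$ fill in exactly the routine steps the paper leaves implicit.
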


\section{Global well-posedness of ODE system}\label{GS}

In order to extend the local solution of the ODE 
system \eqref{ODE}-\eqref{id}, obtained in 
Theorem \ref{lwp}, one must show that the 
norms (see \eqref{Om}, particularly that 
$q^-\leq q(\xi)\leq q^+$, $q^-,q^+>0$, and Remark \ref{Rpi})
\begin{equation*}
	\|U\|_{H^1\cap W^{1,4}},
	\|V\|_{H^1\cap W^{1,4}},
	\|W\|_{L^2\cap L^\infty},
	\|Z\|_{L^2\cap L^\infty},
	\|q\|_{L^\infty},
	\|q^{-1}\|_{L^\infty},
\end{equation*}
are uniformly bounded. 
The proof of this fact is presented in \cite[Section V]{HQ21}. 

In the remainder of this section, we 
gather a collection of a priori bounds that will be 
utilized in subsequent sections.
The following conservation laws \cite[Equations 
(5.3)--(5.5)]{HQ21} play a crucial role in 
the derivation of these bounds (compare with Proposition \ref{PCL}).
\begin{equation}\label{cn}
	\begin{split}
		&\int_{-\infty}^{\infty}
		\left(U^2\cos^2\frac{W}{2}+
		\sin^2\frac{W}{2}\right)(t,\xi)
		\left(
		q\cos^2\frac{Z}{2}
		\right)(t,\xi)\,d\xi = E_{u_0},\\
		&\int_{-\infty}^{\infty}
		\left(V^2\cos^2\frac{Z}{2}+
		\sin^2\frac{Z}{2}\right)(t,\xi)
		\left(
		q\cos^2\frac{W}{2}
		\right)(t,\xi)\,d\xi = E_{v_0},
	\end{split}
\end{equation}
and
\begin{equation}\label{gcn}
	\int_{-\infty}^{\infty}\left(
	qUV\cos^2\frac{W}{2}\cos^2\frac{Z}{2}
	+\frac{q}{4}\sin W\sin Z
	\right)(t,\xi)\,d\xi=G_0,
\end{equation}
as well as
\begin{equation}\label{hcn}
	\begin{split}
		\int_{-\infty}^{\infty}
		&\left(3U^2V^2\cos^2\frac{W}{2}\cos^2\frac{Z}{2}
		+U^2\cos^2\frac{W}{2}\sin^2\frac{Z}{2}
		+V^2\sin^2\frac{W}{2}\cos^2\frac{Z}{2}\right.\\
		&\left.\quad
		+UV\sin W\sin Z-\sin^2\frac{W}{2}\sin^2\frac{Z}{2}\right)
		(t,\xi)q(t,\xi)\,d\xi= H_0,
	\end{split}
\end{equation}
for any $t\in[-T,T]$, where the constants 
$E_{u_0},E_{v_0},G_0$ and $H_0$ are given in \eqref{consq}.

In the proposition presented below, 
we provide slightly different estimates compared 
to those in \cite{HQ21}. These estimates are then 
utilized to prove Theorem \ref{Thm}, specifically 
item (4), as discussed in Section \ref{sgs}.
\begin{proposition}
Assume that $(U,V,W,Z,q)\in C\left([-T,T], \Omega\right)$
is a local solution of \eqref{ODE}-\eqref{id} with 
$(u_0,v_0)\in\Sigma$, where $\Sigma$ is defined in \eqref{Sig}.
Then the following a priori estimates hold:
\begin{equation}\label{apest1}
	\|U(t,\cdot)\|_{L^\infty}\leq 
	E_{u_0}^{1/2},\quad
	\|V(t,\cdot)\|_{L^\infty}\leq 
	E_{v_0}^{1/2},
\end{equation}
and
\begin{equation}\label{u_x^2n}
	\int_{-\infty}^{\infty}
	q\sin^2\frac{W}{2}
	\sin^2\frac{Z}{2}\,d\xi
	\leq 7E_{u_0}E_{v_0}-H_0.
\end{equation}
\end{proposition}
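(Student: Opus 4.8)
The plan is to derive the two estimates \eqref{apest1} and \eqref{u_x^2n} directly from the conservation laws \eqref{cn} and \eqref{hcn}, mirroring the a priori bounds \eqref{u_x^2} already proved for $(u,v)$ in the original spatial variables, but now working in the transformed variables $(U,V,W,Z,q)$. The key observation is that the integrands in \eqref{cn}, \eqref{gcn}, \eqref{hcn} are precisely the images of the spatial integrands under the change of variables $z=y(t,\xi)$ together with the trigonometric identities \eqref{trig} and the formula \eqref{pxiy} for $\pxi y$; indeed $q\cos^2\frac{W}{2}\cos^2\frac{Z}{2}=\pxi y$, while $q\sin^2\frac{W}{2}\sin^2\frac{Z}{2}=q\frac{(\px u)^2}{1+(\px u)^2}\frac{(\px v)^2}{1+(\px v)^2}(t,y)=(\px u)^2(\px v)^2(t,y)\,\pxi y$, so that \eqref{u_x^2n} is just \eqref{u_x^2} pulled back to the $\xi$-line.

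First I would prove \eqref{apest1}. For fixed $t$ and $\xi$, I would write $U^2(t,\xi)$ as the integral of its derivative: since $\pxi(U^2)=2U\,\pxi U$ and (from \eqref{UV}--\eqref{WZ}, \eqref{pxiy}) $\pxi U=\px u(t,y)\pxi y=\left(q\sin\frac{W}{2}\cos\frac{W}{2}\cos^2\frac{Z}{2}\right)(t,\xi)$, I obtain
\begin{equation*}
	U^2(t,\xi)=2\int_{-\infty}^{\xi}
	\left(qU\sin\frac{W}{2}\cos\frac{W}{2}\cos^2\frac{Z}{2}\right)(t,s)\,ds.
\end{equation*}
Applying $2|ab|\le a^2+b^2$ with $a=U\cos\frac{W}{2}$ and $b=\sin\frac{W}{2}$ under the integral, and bounding $\cos^2\frac{Z}{2}\le1$ on the resulting $\sin^2\frac{W}{2}$ term while retaining $\cos^2\frac{Z}{2}$ on the $U^2\cos^2\frac{W}{2}$ term is too crude; instead I would keep the full weight $q\cos^2\frac{Z}{2}$ and estimate
\begin{equation*}
	U^2(t,\xi)\le\int_{-\infty}^{\infty}
	\left(U^2\cos^2\frac{W}{2}+\sin^2\frac{W}{2}\right)(t,s)
	\left(q\cos^2\frac{Z}{2}\right)(t,s)\,ds=E_{u_0},
\end{equation*}
where the last equality is exactly the first identity in \eqref{cn}. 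This gives $\|U(t,\cdot)\|_{L^\infty}\le E_{u_0}^{1/2}$, and the bound for $V$ follows symmetrically using the second identity in \eqref{cn}.

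For \eqref{u_x^2n} I would start from the higher-order conservation law \eqref{hcn} and isolate the $-\sin^2\frac{W}{2}\sin^2\frac{Z}{2}$ term, rewriting
\begin{equation*}
	\int_{-\infty}^{\infty}q\sin^2\tfrac{W}{2}\sin^2\tfrac{Z}{2}\,d\xi
	=\int_{-\infty}^{\infty}q\left(3U^2V^2\cos^2\tfrac{W}{2}\cos^2\tfrac{Z}{2}
	+U^2\cos^2\tfrac{W}{2}\sin^2\tfrac{Z}{2}
	+V^2\sin^2\tfrac{W}{2}\cos^2\tfrac{Z}{2}
	+UV\sin W\sin Z\right)d\xi-H_0.
\end{equation*}
The task is then to bound the right-hand integral by $7E_{u_0}E_{v_0}$. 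Here I would use \eqref{apest1} to control $U^2,V^2\le E_{u_0},E_{v_0}$ pointwise, use $|\sin W|\le2|\sin\frac{W}{2}|\cos\frac{W}{2}\le 2\cos\frac{W}{2}$ (and likewise for $Z$) together with Cauchy–Schwarz in $d\xi$ on the cross term, and absorb the remaining factors against the conservation laws \eqref{cn}, exactly paralleling the four-term estimate in \eqref{u_x^2}. The main obstacle will be the cross term $\int qUV\sin W\sin Z\,d\xi$: I expect to bound it by $4K_{u_0}K_{v_0}/\cdots$-type quantities, i.e. by $4\big(\int U^2\cos^2\frac{W}{2}\cdot q\cos^2\frac{Z}{2}\big)^{1/2}\big(\int V^2\cos^2\frac{Z}{2}\cdot q\cos^2\frac{W}{2}\big)^{1/2}\le 4E_{u_0}^{1/2}E_{v_0}^{1/2}$, after writing $\sin W\sin Z=4\sin\frac{W}{2}\cos\frac{W}{2}\sin\frac{Z}{2}\cos\frac{Z}{2}$ and pairing factors so that Cauchy–Schwarz produces precisely the integrals appearing in \eqref{cn}; tracking the constants so that the four contributions sum to $7E_{u_0}E_{v_0}$ (namely $3+1+1+\text{(cross)}$ with the cross term contributing the remaining $2$) is the delicate bookkeeping step and should be done carefully rather than by the cruder $L^\infty$ bounds.
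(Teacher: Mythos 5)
Your overall route is exactly the paper's: \eqref{apest1} is obtained by writing $U^2(t,\xi)=2\int_{-\infty}^{\xi}U\,\pxi U\,ds$, inserting $\pxi U=\frac{q}{2}\sin W\cos^2\frac{Z}{2}$, and applying the pointwise inequality $2|ab|\le a^2+b^2$ with $a=U\cos\frac{W}{2}$, $b=\sin\frac{W}{2}$ so that the first identity in \eqref{cn} appears; and \eqref{u_x^2n} is obtained by isolating $-q\sin^2\frac{W}{2}\sin^2\frac{Z}{2}$ in \eqref{hcn} and bounding the remaining four terms via \eqref{apest1}, \eqref{cn} and Cauchy--Schwarz. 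The first half of your argument is correct as written.

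In the second half, however, the specific cross-term estimate you propose would fail. You cannot bound $\int qUV\sin W\sin Z\,d\xi$ by $4\bigl(\int U^2\cos^2\frac{W}{2}\,q\cos^2\frac{Z}{2}\bigr)^{1/2}\bigl(\int V^2\cos^2\frac{Z}{2}\,q\cos^2\frac{W}{2}\bigr)^{1/2}$: writing $\sin W\sin Z=4\sin\frac{W}{2}\cos\frac{W}{2}\sin\frac{Z}{2}\cos\frac{Z}{2}$, the factorization you need leaves over $\tan\frac{W}{2}\tan\frac{Z}{2}$, which is unbounded (and the resulting bound $4E_{u_0}^{1/2}E_{v_0}^{1/2}$ would in any case not be dominated by a multiple of $E_{u_0}E_{v_0}$ for small energies). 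The pairing that works, and the one the paper uses, is
\begin{equation*}
	\int_{-\infty}^{\infty}qUV\sin W\sin Z\,d\xi
	\leq 4\|UV\|_{L^\infty}
	\left\|q^{1/2}\sin\tfrac{W}{2}\cos\tfrac{Z}{2}\right\|_{L^2}
	\left\|q^{1/2}\cos\tfrac{W}{2}\sin\tfrac{Z}{2}\right\|_{L^2}
	\leq 4\|UV\|_{L^\infty}E_{u_0}^{1/2}E_{v_0}^{1/2}
	\leq 4E_{u_0}E_{v_0},
\end{equation*}
where the two $L^2$ norms are controlled by \eqref{cn} since all summands there are nonnegative, and $\|UV\|_{L^\infty}\le E_{u_0}^{1/2}E_{v_0}^{1/2}$ by \eqref{apest1}. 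Correspondingly, your anticipated bookkeeping $3+1+1+2$ is not realizable; the correct split is $2+1+4=7$: group $U^2\cos^2\frac{W}{2}\bigl(V^2\cos^2\frac{Z}{2}+\sin^2\frac{Z}{2}\bigr)\le\|U\|_{L^\infty}^2E_{v_0}$ and $V^2\cos^2\frac{Z}{2}\bigl(U^2\cos^2\frac{W}{2}+\sin^2\frac{W}{2}\bigr)\le\|V\|_{L^\infty}^2E_{u_0}$ out of the first three terms, leaving one copy of $U^2V^2\cos^2\frac{W}{2}\cos^2\frac{Z}{2}\le\|U\|_{L^\infty}^2E_{v_0}$, and add the $4E_{u_0}E_{v_0}$ from the cross term.
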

\begin{proof}
	\textbf{Step 1, estimates \eqref{apest1}.}
	Using that (see \cite[Equation (5.1)]{HQ21})
	\begin{equation}\label{pxiUV}
		\pxi U=\frac{q}{2}\sin W\cos^2\frac{Z}{2}\quad
		\mbox{and}\quad
		\pxi V=\frac{q}{2}\cos^2\frac{W}{2}\sin Z,
	\end{equation}
	and \eqref{cn}, we 
	have, cf.\,\,\cite[Equation (5.7)]{HQ21}
	(here and below the $t$-argument is dropped)
	\begin{equation*}
		\begin{split}
			U^2(\xi)=&2\int_{-\infty}^\xi U(\eta)
			\peta U(\eta)\,d\eta
			\leq\int_{-\infty}^\infty\left(q|U\sin W|
			\cos^2\frac{Z}{2}\right)(\eta)\,d\eta\\
			&=2\int_{-\infty}^\infty
			\left(q^{1/2}
			\left|U\cos\frac{W}{2}\cos\frac{Z}{2}\right|
			\right)(\eta)\cdot
			\left(q^{1/2}
			\left|
			\sin\frac{W}{2}
			\cos\frac{Z}{2}
			\right|
			\right)(\eta)\,d\eta\\
			&\leq\int_{-\infty}^\infty
			\left(U^2\cos^2\frac{W}{2}+
			\sin^2\frac{W}{2}\right)(\eta)
			\left(
			q\cos^2\frac{Z}{2}
			\right)(\eta)\,d\eta
			=E_{u_0},
		\end{split}
	\end{equation*}
	which implies \eqref{apest1} for $\|U\|_{L^\infty}$.
	The proof for $\|V\|_{L^\infty}$ is similar.
	
	\medskip
	
	\textbf{Step 2, estimate \eqref{u_x^2n}.}
	Recalling the conservation law \eqref{hcn}, 
	we have (see \eqref{u_x^2})
	\begin{equation*}
		\begin{split}
			\int_{-\infty}^{\infty}
			q\sin^2\frac{W}{2}&
			\sin^2\frac{Z}{2}\,d\xi
			=\int_{-\infty}^{\infty}
			\left(3U^2V^2\cos^2\frac{W}{2}\cos^2\frac{Z}{2}
			+U^2\cos^2\frac{W}{2}\sin^2\frac{Z}{2}\right.\\
			&\left.\qquad\qquad\qquad\qquad\,\,
			+V^2\sin^2\frac{W}{2}\cos^2\frac{Z}{2}
			+UV\sin W\sin Z\right)q\,d\xi-H_0\\
			&\leq\|U\|^2_{L^\infty}E_{v_0}
			+\int_{-\infty}^{\infty}
			\left(2U^2V^2\cos^2\frac{W}{2}\cos^2\frac{Z}{2}
			+V^2\sin^2\frac{W}{2}\cos^2\frac{Z}{2}\right.\\
			&\left.\qquad\qquad\qquad\qquad
			\qquad
			+UV\sin W\sin Z\right)q\,d\xi-H_0\\
			&\leq 2\|U\|^2_{L^\infty}E_{v_0}
			+\|V\|^2_{L^\infty}E_{u_0}
			+4\|UV\|_{L^\infty}
			E_{u_0}^{1/2}E_{v_0}^{1/2}-H_0
			\leq 7E_{u_0}E_{v_0}-H_0,
		\end{split}
	\end{equation*}
	where we have used that
	$$
	\int_{-\infty}^{\infty}qUV\sin W\sin Z\,d\xi
	\leq 4\|UV\|_{L^\infty}
	\left\|q^{1/2}\sin\frac{W}{2}\cos\frac{Z}{2}
	\right\|_{L^2}
	\left\|
	q^{1/2}\cos\frac{W}{2}\sin\frac{Z}{2}
	\right\|_{L^2},
	$$
	by the Cauchy-Schwarz inequality.
\end{proof}

The following uniform bounds for the nonlocal 
terms are proved in \cite[p.~13--14]{HQ21}
(see also Section \ref{cls} herein):
\begin{equation}\label{PjSj}
	\|P_j(t,\cdot)\|_{L^p},
	\|\px P_j(t,\cdot)\|_{L^p},
	\|S_j(t,\cdot)\|_{L^p},
	\|\px S_j(t,\cdot)\|_{L^p}
	\leq C_p,\quad
	j=1,2,
\end{equation}
for some constant $C_p=C_p(E_{u_0},E_{v_0}, H_0)>0$, 
$p\in [1,\infty]$. Then from the fifth equation 
in \eqref{ODE} we conclude that
(see \cite[p.~13]{HQ21} and 
\cite[Equation (4.29)]{BC07})
\begin{equation}\label{qap}
	\kappa_0^{-1}\leq\|q(t,\cdot)\|_{L^\infty}
	\leq\kappa_0,\quad 
	\kappa_0=e^{CT},
	\qquad t\in[-T,T], 
\end{equation}
for some constant $C=C(E_{u_0},E_{v_0}, H_0)>0$. 
Moreover, we have the following uniform bounds 
for $U,V$, $W$, $Z$ for 
all $t\in[-T,T]$ (see \cite[p.~14--15]{HQ21}):
\begin{equation}\label{UVe}
	\begin{split}
		&\|U(t,\cdot)\|_{H^1\cap W^{1,4}}\leq
		\|U_0\|_{H^1\cap W^{1,4}}+CT,\\
		&\|V(t,\cdot)\|_{H^1\cap W^{1,4}}\leq
		\|V_0\|_{H^1\cap W^{1,4}}+CT,
	\end{split}
\end{equation}
and, for $p=2,\infty$,
\begin{equation}\label{WZe}
	\begin{split}
		&\|W(t,\cdot)\|_{L^p}\leq\|W_0\|_{L^p}+CT,\\
		&\|Z(t,\cdot)\|_{L^p}\leq\|Z_0\|_{L^p}+CT,
	\end{split}
\end{equation}
for some constant $C=C(E_{u_0},E_{v_0},H_0)>0$.

Combining a priori estimates 
\eqref{qap}, \eqref{UVe} and \eqref{WZe} 
we arrive at the following result:
\begin{theorem}
[Global existence and uniqueness of \eqref{ODE}-\eqref{id}]
\label{gwp}
Suppose that $(u_0,v_0)\in\Sigma$, where $\Sigma$ is defined 
in \eqref{Sig}. Then there exists a unique global 
solution $(U,V,W,Z,q)(t,\xi)$ of the Cauchy 
problem \eqref{ODE}-\eqref{id} such that 
\begin{equation*}
	(U,V,W,Z,q)\in C\left(
	[-T,T], \Omega
	\right),\quad
	\mbox{for any $T>0$},
\end{equation*}
where $\Omega$ is given in \eqref{Om}.
\end{theorem}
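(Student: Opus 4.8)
The plan is to run a standard continuation (bootstrap) argument: starting from the local solution of Theorem~\ref{lwp}, I would extend it to an arbitrary interval $[-T,T]$ by showing that the a priori bounds \eqref{qap}, \eqref{UVe} and \eqref{WZe} confine the flow to $\Omega$ for as long as it exists, so that the maximal existence time cannot be finite. Concretely, I would fix $T>0$, let $[0,T^*)$ be the maximal forward interval on which the solution stays in $\Omega$ (the backward direction is symmetric), and note that on this interval the conserved quantities $E_{u_0},E_{v_0},G_0,H_0$ are constant (Proposition~\ref{PCL}, i.e.\ \eqref{cn}--\eqref{hcn}), so the constant $C=C(E_{u_0},E_{v_0},H_0)$ appearing in \eqref{qap}, \eqref{UVe}, \eqref{WZe} is independent of $t$.

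Next I would collect the bounds and check that they keep the solution inside a single $\Omega$. The estimates \eqref{UVe} and \eqref{WZe} directly bound $\|U\|_{H^1\cap W^{1,4}},\|V\|_{H^1\cap W^{1,4}}$ and $\|W\|_{L^2},\|Z\|_{L^2}$ by finite constants $R_1(T),R_2(T)$. The only genuinely non-automatic ingredient is the control of $q$: since the fifth equation of \eqref{ODE} with $q_0\equiv 1$ integrates to $q(t,\xi)=\exp\!\big(\int_0^t(\cdots)\,ds\big)$ with a coefficient bounded in absolute value by $C$ (using $\|U\|_{L^\infty},\|V\|_{L^\infty}$ from \eqref{apest1}, the nonlocal bounds \eqref{PjSj}, and $|\sin W|,|\sin Z|\le1$), I would obtain the pointwise two-sided bound $e^{-CT}\le q(t,\xi)\le e^{CT}$. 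This furnishes not merely the $L^\infty$ estimate \eqref{qap} but, crucially, the pointwise lower bound $q^-:=e^{-CT}>0$, which is exactly the positivity required to remain in $\Omega$ (cf.\ \eqref{Om}).

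The step I expect to be the main obstacle is reconciling the growth $\|W(t,\cdot)\|_{L^\infty}\le\pi+CT$ coming from \eqref{WZe} with the fixed threshold $\tfrac{3\pi}{2}$ in \eqref{Om}. Here I would appeal to Remark~\ref{Rpi}: the right-hand side of \eqref{ODE} depends on $W,Z$ only through the $2\pi$-periodic quantities $\cos^2\frac{W}{2},\sin W,\sin^2\frac{W}{2}$ and their $Z$-analogues (see \eqref{p1q1}--\eqref{p2q2}), so $W,Z$ may be treated modulo $2\pi$ without altering $U,V,q$ or any nonlocal term; equivalently, the defining constants of $\Omega$ may be allowed to depend on $T$. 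Having thus confined the solution to a fixed $\Omega=\Omega(R_1(T),R_2(T),e^{-CT},e^{CT})$ on $[0,T^*)$, I would close the argument: the limit $\lim_{t\uparrow T^*}(U,V,W,Z,q)(t,\cdot)$ exists in $E$ and lies in $\Omega$, so if $T^*<T$ one could restart Theorem~\ref{lwp} from this limit and extend past $T^*$, contradicting maximality; hence $T^*=T$, and as $T$ is arbitrary the solution is global. Uniqueness on each $[-T,T]$ then follows from the local uniqueness in Theorem~\ref{lwp} via the usual open--closed (connectedness) argument on the set of times where two solutions coincide.
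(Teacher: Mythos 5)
Your proposal is correct and follows essentially the same route as the paper: the paper likewise obtains Theorem~\ref{gwp} by combining the local result of Theorem~\ref{lwp} with the a priori bounds \eqref{qap}, \eqref{UVe}, \eqref{WZe} (whose derivation rests on the conservation laws \eqref{cn}--\eqref{hcn}, the estimates \eqref{apest1}, \eqref{u_x^2n}, \eqref{PjSj}, and the linearity of the $q$-equation giving the two-sided pointwise bound $e^{-CT}\le q\le e^{CT}$), deferring the continuation mechanics to \cite[Section V]{HQ21}. Your explicit treatment of the pointwise lower bound on $q$ and of the $\tfrac{3\pi}{2}$ threshold via Remark~\ref{Rpi} matches the paper's intent.
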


\section{Global solutions of two-component system}\label{GsN}

In this section, we demonstrate the existence 
of a global conservative solution to the Cauchy 
problem for the equation \eqref{t-c-N}, as defined 
in Definitions \ref{defs} and \ref{defsc}, by employing 
the global solution $(U,V,W,Z,q)(t,\xi)$ of the corresponding 
ODE system. Building upon the work of \cite{HQ21}, we identify new 
conservation properties of the solution $(u,v)$ and introduce 
a novel continuity property for the data-to-solution map, as detailed 
in Propositions \ref{mcl} and \ref{Pdsm} below. Additionally, we establish 
the existence of a global weak solution $(u,v)(t,\cdot)\in\Sigma$ 
with initial data $(u_0,v_0)$ from the 
same set $\Sigma$ (see \eqref{Sig} and \eqref{d^2}), 
contrasting with \cite{HQ21}, where 
it was assumed that $u_0,v_0\in H^1\cap W^{1,4}$. 
This advancement enables us to 
construct a global \textit{semigroup} of conservative 
solutions, as will be discussed in 
Section \ref{sgs} below.

\subsection{Reconstruction of $(u,v)$ in terms of $(U,V,y)$}\label{ds}

Define the characteristic $y(t,\xi)$ as follows (cf.\,\,\eqref{char}):
\begin{equation}\label{char1}
	y(t,\xi)=y_0(\xi)
	+\int_0^t\left(UV\right)(\tau,\xi)\,d\tau,
	\quad t,\xi\in\mathbb{R},
\end{equation}
where $y_0(\xi)$ is given by \eqref{y0} and $U,V$ 
are the global solutions of the ODE system \eqref{ODE}-\eqref{id} 
given in Theorem \ref{gwp}. 
We present a concise summary of certain properties 
of the function $y(t,\xi)$ in the following proposition 
(see \cite[Section VI, Steps 1-3]{HQ21}):

\begin{proposition}\label{propy}
The characteristic $y(t,\xi)$ defined in \eqref{char1} 
satisfies the following properties:
\begin{enumerate}
	\item the image of $y(t,\cdot)$ is $\mathbb{R}$,
	\item $y(t,\cdot)$ is nondecreasing,
	\item if $\xi<\eta$ and $y(t,\xi)=y(t,\eta)$, then
	$U(t,\xi)=U(t,\eta)$ and $V(t,\xi)=V(t,\eta)$,
\end{enumerate}
for all fixed $t\in\mathbb{R}$.
\end{proposition}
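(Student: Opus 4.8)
The plan is to establish the three properties by exploiting the explicit formula \eqref{char1} together with the structural identity \eqref{pxiy}, $\pxi y = q\cos^2\frac{W}{2}\cos^2\frac{Z}{2}$, and the a priori bounds from Theorem \ref{gwp} guaranteeing $(U,V,W,Z,q)(t,\cdot)\in\Omega$ for all $t$. The key point throughout is that $\pxi y \geq 0$ everywhere (since $q\geq q^->0$ and the cosine-squared factors are nonnegative), so $y(t,\cdot)$ is automatically nondecreasing, giving property (2) immediately; the subtlety is that $\pxi y$ can vanish on sets of positive measure, which is precisely why $y(t,\cdot)$ need not be strictly increasing and why properties (1) and (3) require separate arguments.

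For property (1), the surjectivity of $y(t,\cdot)$ onto $\mathbb{R}$, I would integrate \eqref{pxiy} and show that $y(t,\xi)\to\pm\infty$ as $\xi\to\pm\infty$. The estimate \eqref{ydiff} expresses differences of $y$ as integrals of $q\cos^2\frac{W}{2}\cos^2\frac{Z}{2}$; since $y(t,\cdot)$ is continuous (being an integral), the intermediate value theorem then yields the full real line as image once the limits are controlled. The crux here is proving the integral $\int q\cos^2\frac{W}{2}\cos^2\frac{Z}{2}\,ds$ diverges at both ends. This should follow from the conservation laws \eqref{cn}: were the total $\xi$-integral of $q\cos^2\frac{W}{2}\cos^2\frac{Z}{2}$ finite, one could combine it with the bounds $\|U\|_{L^\infty}, \|V\|_{L^\infty}$ from \eqref{apest1} and $q^-\leq q$ to contradict the finiteness or positivity of $E_{u_0}, E_{v_0}$; I expect this to be the main obstacle, as it requires carefully ruling out $\cos^2\frac{W}{2}$ and $\cos^2\frac{Z}{2}$ collapsing simultaneously over a tail.

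For property (3), suppose $\xi<\eta$ with $y(t,\xi)=y(t,\eta)$. By monotonicity (property (2)) and \eqref{ydiff}, this forces $\pxi y=0$ a.e.\ on $[\xi,\eta]$, i.e.\ $\left(q\cos^2\frac{W}{2}\cos^2\frac{Z}{2}\right)(t,s)=0$ for a.e.\ $s\in[\xi,\eta]$; since $q\geq q^->0$, we get $\cos^2\frac{W}{2}\cos^2\frac{Z}{2}=0$ a.e.\ there. The plan is to use the identities \eqref{pxiUV}, namely $\pxi U=\frac{q}{2}\sin W\cos^2\frac{Z}{2}$ and $\pxi V=\frac{q}{2}\cos^2\frac{W}{2}\sin Z$, to show $U$ and $V$ are constant on $[\xi,\eta]$. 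Indeed, wherever $\cos^2\frac{Z}{2}=0$ we have $\pxi U=0$ directly, and on the complementary set (within $[\xi,\eta]$) we must have $\cos^2\frac{W}{2}=0$, whence $\sin W=2\sin\frac{W}{2}\cos\frac{W}{2}=0$ and again $\pxi U=0$; the symmetric argument handles $\pxi V$. Integrating $\pxi U=\pxi V=0$ across $[\xi,\eta]$ yields $U(t,\xi)=U(t,\eta)$ and $V(t,\xi)=V(t,\eta)$, completing the proof.
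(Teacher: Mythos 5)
Your treatments of items (2) and (3) are correct and essentially identical to the paper's: nonnegativity of $\pxi y=q\cos^2\frac{W}{2}\cos^2\frac{Z}{2}$ gives monotonicity, and the case analysis on where $\cos^2\frac{W}{2}$ or $\cos^2\frac{Z}{2}$ vanishes (using $\sin W=2\sin\frac{W}{2}\cos\frac{W}{2}$) kills $\pxi U$ and $\pxi V$ a.e.\ on $[\xi,\eta]$, exactly as in the paper.

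Item (1) is where you have a genuine gap. You reduce surjectivity to the divergence of $\int q\cos^2\frac{W}{2}\cos^2\frac{Z}{2}\,ds$ at both ends, correctly identify this as the crux, and then only say it ``should follow from the conservation laws \eqref{cn}'' without proving it. As stated this is not a proof, and the conservation laws are not the natural tool: \eqref{cn} bounds weighted integrals of $\sin^2\frac{W}{2}$ and $\sin^2\frac{Z}{2}$ from above, and turning that into a lower bound on the tail of $\int q\cos^2\frac{W}{2}\cos^2\frac{Z}{2}$ requires an extra argument. What actually delivers the divergence is the estimate \eqref{Ep} already proved in Lemma \ref{L1}, namely $\int_\eta^\xi q\cos^2\frac{W}{2}\cos^2\frac{Z}{2}\,ds\geq\frac{q^-}{4}(\xi-\eta-R_2^2)$, which rests on the $L^2$ bounds $\|W\|_{L^2},\|Z\|_{L^2}\leq R_2$ and $q\geq q^-$ rather than on \eqref{cn}; citing that would close your gap. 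The paper avoids the issue entirely: from the explicit formula \eqref{char1} and the bound \eqref{apest1} one gets $y_0(\xi)-(E_{u_0}E_{v_0})^{1/2}|t|\leq y(t,\xi)\leq y_0(\xi)+(E_{u_0}E_{v_0})^{1/2}|t|$, and since $y_0(\xi)\to\pm\infty$ as $\xi\to\pm\infty$ (by \eqref{y0} and integrability of $(\px u_0)^2$, $(\px v_0)^2$, $((\px u_0)\px v_0)^2$), continuity of $y(t,\cdot)$ gives the full real line as image in two lines. I recommend adopting that argument, or at minimum replacing your appeal to \eqref{cn} by an appeal to \eqref{Ep}.
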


\begin{proof}
Using \eqref{apest1}, we observe that
\begin{equation*}
	y_0(\xi)-E_{u_0}^{1/2}E_{v_0}^{1/2}|t|
	\leq y(t,\xi)
	\leq y_0(\xi)+E_{u_0}^{1/2}E_{v_0}^{1/2}|t|,
\end{equation*}
which implies that $y(t,\xi)\to\pm\infty$ as 
$\xi\to\pm\infty$ for all $t\in\mathbb{R}$ 
and item (1) is proved.
	
Recalling the formal identities \eqref{q} and \eqref{trig},
one has (see \cite[Section VI, Step 2]{HQ21}):
\begin{equation}\label{pxi-y}
	\pxi y=q\cos^2\frac{W}{2}\cos^2\frac{Z}{2}.
\end{equation}
Since $q>0$, item (2) follows from \eqref{pxi-y}.
	
Finally, let us consider $\xi< \eta$ such 
that $y(t,\xi)=y(t, \eta)$. 
Using \eqref{pxi-y}, we can conclude that
(the argument $t$ is omitted for clarity)
\begin{equation*}
	0=\int_{\xi}^{\eta}\pxi y(s)\,ds
	=\int_{\xi}^{\eta}
	\left(q\cos^2\frac{W}{2}\cos^2\frac{Z}{2}\right)(s)\,ds.
\end{equation*}
which implies that for every $s\in[\xi,\eta]$ either 
$W(s)=\pi+2\pi n$ or $Z(s)=\pi+2\pi n$, 
where $n\in\mathbb{Z}$. 
Consequently, from \eqref{pxiUV}, we have
\begin{equation*}
	\begin{split}
		&U(\eta)-U(\xi)
		=\int_{\xi}^{\eta}\left(\frac{q}{2}
		\sin W\cos^2\frac{Z}{2}
		\right)(s)\,ds=0,\\
		&V(\eta)-V(\xi)
		=\int_{\xi}^{\eta}\left(\frac{q}{2}
		\cos^2\frac{W}{2}\sin Z
		\right)(s)\,ds=0,
	\end{split}
\end{equation*}
and item (3) is established.
\end{proof}

We define $(u,v)(t,x)$ as follows (cf.~\eqref{UV}):
\begin{align}\label{uvdef}
	u(t,x)=U(t,\xi),\quad v(t,x)=V(t,\xi),\quad
	\mbox{if }x=y(t,\xi),\quad t\in\mathbb{R}.
\end{align}
As per Proposition \ref{propy}, this 
definition is valid.
Observe that equations \eqref{pxi-y}, 
\eqref{uvdef} and \eqref{pxiUV} imply
\begin{equation}\label{pxuv}
	\begin{split}
		&\px u(t,x)=\frac{\pxi U(t,\xi)}{\pxi y(t,\xi)}
		=\tan\frac{W(t,\xi)}{2},\quad x=y(t,\xi),
		\quad\xi\in\mathbb{R}\setminus D_W(t),\\
		&\px v(t,x)=\frac{\pxi V(t,\xi)}{\pxi y(t,\xi)}
		=\tan\frac{Z(t,\xi)}{2},\quad\,\, x=y(t,\xi),
		\quad\xi\in\mathbb{R}\setminus D_Z(t),
	\end{split}
\end{equation}
where the sets $D_W(t)$ and $D_Z(t)$ 
are defined in \eqref{DN}.

\subsection{Global weak solution}

We begin with the following lemma 
(see \cite[Section VI, Step 4]{HQ21}):

\begin{lemma}\label{lHQ21}
Suppose $(u_0, v_0) \in \Sigma$, 
where $\Sigma$ is defined in \eqref{Sig}. 
Then, for a.e.~$t \in N_W$ and a.e.~$\xi \in D_{W}(t)$, 
we have $V(t, \xi) = 0$. Furthermore, 
for a.e.~$t \in N_Z$ and a.e.~$\xi 
\in D_{Z}(t)$, $U(t, \xi) = 0$.
\end{lemma}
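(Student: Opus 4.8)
The plan is to exploit the evolution equation for $W$ restricted to the wave-breaking set, where it degenerates to the simple identity $\pt W=-V$, and then combine this with a transversality argument and Tonelli's theorem. First, recall from the third equation in \eqref{ODE} that
\[
\pt W = 2U^2V\cos^2\frac{W}{2}-V\sin^2\frac{W}{2}-2\left(P_1+\px P_2\right)\cos^2\frac{W}{2}.
\]
On the set $D_W(t)=\{\xi:\cos(W(t,\xi)/2)=0\}$ defined in \eqref{DN} we have $\cos^2(W/2)=0$ and $\sin^2(W/2)=1$, so every term carrying a factor $\cos^2(W/2)$ vanishes and the identity collapses to $\pt W(t,\xi)=-V(t,\xi)$ whenever $\xi\in D_W(t)$.

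Second, for fixed $\xi$ consider $h_\xi(t):=\cos(W(t,\xi)/2)$; differentiating gives $h_\xi'(t)=-\tfrac12\sin(W/2)\,\pt W$, which at a zero of $h_\xi$ (where $\sin(W/2)=\pm1$ and $\pt W=-V$) equals $\pm\tfrac12 V(t,\xi)$. Hence at any time $t$ with $\xi\in D_W(t)$ and $V(t,\xi)\neq0$ the crossing is transversal, $h_\xi'(t)\neq0$, so such a $t$ is an isolated zero of $h_\xi$. Since isolated points of any subset of $\mathbb{R}$ form an at most countable set, for each fixed $\xi$ the time-slice $A_\xi:=\{t:\cos(W(t,\xi)/2)=0,\ V(t,\xi)\neq0\}$ is Lebesgue-null.

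Third, apply Tonelli's theorem to the jointly measurable set $A=\{(t,\xi):\cos(W(t,\xi)/2)=0,\ V(t,\xi)\neq0\}$. Since every $\xi$-slice $A_\xi$ is null, $A$ has planar measure zero; slicing in the other order then shows that for a.e.\ $t$ the $\xi$-slice $A^t=\{\xi\in D_W(t):V(t,\xi)\neq0\}$ is null. In particular, for a.e.\ $t\in N_W$ we have $V(t,\xi)=0$ for a.e.\ $\xi\in D_W(t)$, which is the first claim. The second claim follows verbatim after replacing $W$ by $Z$: the fourth equation in \eqref{ODE} gives $\pt Z=-U$ on $D_Z(t)$, whence $U(t,\xi)=0$ for a.e.\ $t\in N_Z$ and a.e.\ $\xi\in D_Z(t)$.

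The main obstacle is the regularity needed to run the transversality step pointwise in $\xi$: one must justify that, for a.e.\ fixed $\xi$, the map $t\mapsto W(t,\xi)$ is $C^1$, so that $h_\xi$ is differentiable with $h_\xi'=-\tfrac12\sin(W/2)\pt W$, and that $t\mapsto V(t,\xi)$ is continuous. This uses that $(U,V,W,Z,q)\in C([-T,T],\Omega)$ solves \eqref{ODE} in $E$, so the associated integral equations hold in $E$ and, evaluated at a.e.\ $\xi$, yield absolutely continuous trajectories satisfying the scalar ODEs pointwise; the joint measurability of $(t,\xi)\mapsto(W,V)$ needed for Tonelli follows from continuity in $t$ with values in $\Omega$. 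Apart from this measure-theoretic bookkeeping, no conservation law is needed for the lemma itself.
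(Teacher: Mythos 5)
Your proof is correct and starts from the same observation as the paper's---on $D_W(t)$ the third equation of \eqref{ODE} degenerates to $\pt W=-V$---but the way you extract $V=0$ from this identity is genuinely different. The paper fixes the sign of $V$, supposes for contradiction that the corresponding subset of $\{(t,\xi):\xi\in D_W(t)\}$ has positive planar measure, and then uses only the absolute continuity of $t\mapsto W(t,\xi)$: on a $t$-set of positive measure where $W(\cdot,\xi)$ is constant (equal to $\pi$), the derivative $\pt W$ vanishes a.e., contradicting $\pt W=-V\neq0$. You instead run a pointwise transversality argument on $h_\xi=\cos(W/2)$, showing that every zero at which $V\neq0$ is isolated, so each $\xi$-slice of the bad set is countable, and then apply Tonelli; the paper slices the other way via Fubini. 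Both are valid, and the conclusions match. One remark on your regularity discussion: you say the step requires $t\mapsto W(t,\xi)$ to be $C^1$, but your justification only delivers absolute continuity with the scalar ODE holding for a.e.\ $t$, which does not by itself give differentiability \emph{at} a prescribed crossing time $t_0$. This is not a real obstruction, for two reasons: (i) the $C^1$ upgrade is in fact available, since $U,V,P_1,\px P_2$ are jointly continuous and $t\mapsto W(t,\xi),Z(t,\xi)$ are Lipschitz for a.e.\ fixed $\xi$, so the integrand in the integral equation for $W(\cdot,\xi)$ is continuous in $t$; and (ii) even without it, you may simply discard from $A_\xi$ the Lebesgue-null set of times where $W(\cdot,\xi)$ fails to be differentiable or the pointwise ODE fails---the remainder consists of isolated zeros, hence is countable, and $A_\xi$ is still null. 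With that bookkeeping made explicit, your argument is a complete and slightly more hands-on alternative to the paper's, whose level-set argument is more economical in that it never needs pointwise differentiability.
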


\begin{proof}
Introduce the sets (cf.~\eqref{DN})
\begin{equation*}
	\begin{split}
		&D_{V,W}^{(j)}(t)
		=\{\xi\in D_W(t): (-1)^jV(t,\xi)>0\},
		\quad j=1,2,\\
		&D_{U,Z}^{(j)}(t)
		=\{\xi\in D_Z(t): (-1)^jU(t,\xi)>0\},
		\quad j=1,2,
	\end{split}
\end{equation*}
and
\begin{equation*}
	\begin{split}
		& N_{V,W}^{(j)}=\left\{t\in N_W:
		\mathrm{meas}\left(D_{V,W}^{(j)}(t)\right)>0\right\},
		\quad j=1,2,\\
		& N_{U,Z}^{(j)}=\left\{t\in N_Z:
		\mathrm{meas}\left(D_{U,Z}^{(j)}(t)
		\right)>0\right\},\quad j=1,2.
	\end{split}
\end{equation*}
We must prove that
\begin{equation}\label{NzNw}
	\mathrm{meas}\left(N_{U,Z}^{(j)}\right)=
	\mathrm{meas}\left(N_{V,W}^{(j)}\right)=0,
	\quad j=1,2.
\end{equation}
Consider the set $N_{V,W}^{(1)}$.
Suppose to the contrary that
$\mathrm{meas}\bigl(N_{V,W}^{(1)}\bigr)>0$.
Taking into account that (i) $W(\cdot,\xi)$ 
is absolutely continuous for 
a.e.~$\xi\in\mathbb{R}$ (this follows from \eqref{ODE}) 
and (ii) $W(t,\xi)=\pi$ for all 
$(t,\xi)\in N_{V,W}^{(1)}\times D_{V,W}^{(1)}(t)$, we have 
$\pt W(t,\xi)=0$ a.e. in
$N_{V,W}^{(1)}\times D_{V,W}^{(1)}(t)$. 
On the other hand, from \eqref{ODE} we conclude that 
$\pt W(t,\xi)=-V(t,\xi)$ for all 
$t\in N_{V,W}^{(1)}$ and for a.e. $\xi\in D_{V,W}^{(1)}(t)$.
Since $V(t,\xi)<0$ for such $(t,\xi)$ and the Lebesgue measure 
on $\mathbb{R}^2$ of the set 
$N_{V,W}^{(1)}\times D_{V,W}^{(1)}(t)$ is 
positive, we arrive at a contradiction.
Treating $N_{V,W}^{(2)}$ and $N_{U,Z}^{(j)}$, where $j=1,2$, 
in the same manner, we arrive at \eqref{NzNw}.
\end{proof}

For later use, let us gather the following properties 
(see \cite[Section VI, Steps 5–6]{HQ21}).

\begin{proposition}\label{Pgws}
The functions $u=u(t,x)$ and $v=v(t,x)$, 
which are defined by \eqref{uvdef}, 
possess the following properties:
\begin{enumerate}
	\item the uniform bounds given in 
	Definition \ref{defsc}, item (3);
	
	\item the Lipschitz property \eqref{uL};
	
	\item  the H\"older continuity property \eqref{uH};

	\item the conservation laws given in Definition \ref{defsc}, 
	item (1) and Definition \ref{defsc}, item (4).
\end{enumerate}
\end{proposition}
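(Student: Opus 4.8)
The plan is to transfer every spatial integral and pointwise bound for $(u,v)$ back to the $\xi$-variable through the characteristic change of variables $x=y(t,\xi)$, and then to invoke the conservation laws \eqref{cn}, \eqref{gcn}, \eqref{hcn} together with the a priori bounds \eqref{apest1}, \eqref{PjSj} already established for the ODE solution $(U,V,W,Z,q)$. The two computational engines are: (a) the identities \eqref{pxuv} and \eqref{pxi-y}, which give $u=U$, $v=V$, $\px u=\tan\frac{W}{2}$, $\px v=\tan\frac{Z}{2}$ and $dx=\pxi y\,d\xi=q\cos^2\frac{W}{2}\cos^2\frac{Z}{2}\,d\xi$; and (b) the relations \eqref{pxiUV}, \eqref{UVd} and \eqref{char1} governing propagation in $\xi$ and in $t$. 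Since $\pxi y=0$ exactly on $D_W(t)\cup D_Z(t)$ (see \eqref{DN}), the monotone change of variables integrates only over $\mathbb{R}\setminus(D_W(t)\cup D_Z(t))$, and keeping track of this defect is the crux of the argument.

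For items (1) and (4) I would begin with the $H^1$ norm. Substituting $x=y(t,\xi)$ and using $\tan^2\frac{W}{2}\cos^2\frac{W}{2}=\sin^2\frac{W}{2}$ yields
\[
\|u(t,\cdot)\|_{H^1}^2=\int_{\mathbb{R}\setminus(D_W(t)\cup D_Z(t))}\left(U^2\cos^2\frac{W}{2}+\sin^2\frac{W}{2}\right)q\cos^2\frac{Z}{2}\,d\xi,
\]
whose integrand coincides with that of \eqref{cn}. The defect relative to $E_{u_0}$ is $\int_{D_W(t)\setminus D_Z(t)}q\cos^2\frac{Z}{2}\,d\xi\ge0$ (on $D_Z$ the integrand vanishes, on $D_W\setminus D_Z$ it equals $q\cos^2\frac{Z}{2}$), which gives $\|u(t,\cdot)\|_{H^1}^2\le E_{u_0}$ for all $t$ and equality whenever $\mathrm{meas}(D_W(t))=0$, i.e.\ for $t\in\mathbb{R}\setminus N_W$; the case of $v$ is symmetric. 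The same pullback applied to the $G_0$-integrand produces $\bigl(UV\cos^2\frac{W}{2}\cos^2\frac{Z}{2}+\frac14\sin W\sin Z\bigr)q$, which vanishes identically on $D_W(t)\cup D_Z(t)$, so comparison with \eqref{gcn} gives the $G_0$ conservation law for every $t$. Finally, pulling back the $H_0$-integrand reproduces the integrand of \eqref{hcn}, and since both $D_W(t)$ and $D_Z(t)$ are null for $t\notin N_W\cup N_Z$, equality with $H_0$ follows there.

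For items (2) and (3) I would argue along characteristics. Fixing $t$ and writing $x_i=y(t,\xi_i)$, from \eqref{pxiUV} one has $\frac{q}{2}|\sin W|\cos^2\frac{Z}{2}=\bigl(q\cos^2\frac{W}{2}\cos^2\frac{Z}{2}\bigr)^{1/2}\bigl(q\sin^2\frac{W}{2}\cos^2\frac{Z}{2}\bigr)^{1/2}$, so by Cauchy--Schwarz
\[
|U(t,\xi_1)-U(t,\xi_2)|\le\int_{\xi_1}^{\xi_2}|\pxi U|\,d\xi\le\left(\int_{\xi_1}^{\xi_2}q\cos^2\frac{W}{2}\cos^2\frac{Z}{2}\,d\xi\right)^{1/2}\left(\int_{\xi_1}^{\xi_2}q\sin^2\frac{W}{2}\cos^2\frac{Z}{2}\,d\xi\right)^{1/2};
\]
the first integral equals $|x_1-x_2|$ by \eqref{ydiff} and the second is bounded by $E_{u_0}$ via \eqref{cn}, yielding the Hölder-$1/2$ bound in $x$. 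For continuity in $t$, I would compare $u(t_2,x)$ and $u(t_1,x)$ through the label $\xi$ with $y(t_1,\xi)=x$, using the decomposition $u(t_2,x)-u(t_1,x)=[u(t_2,x)-u(t_2,y(t_2,\xi))]+[U(t_2,\xi)-U(t_1,\xi)]$: the first bracket is controlled by the $x$-Hölder estimate applied to $|y(t_1,\xi)-y(t_2,\xi)|\le E_{u_0}^{1/2}E_{v_0}^{1/2}|t_1-t_2|$ (from \eqref{char1} and \eqref{apest1}), and the second by $|U(t_2,\xi)-U(t_1,\xi)|\le\int_{t_1}^{t_2}|\px P_1+P_2|\,d\tau\le C|t_1-t_2|$ (from \eqref{UVd} and \eqref{PjSj}); for $t_1,t_2\in[-T,T]$ these combine to \eqref{uH}. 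For the $L^2$-Lipschitz bound \eqref{uL}, I would use the characteristic form $\pt u=-uv\px u-\px P_1-P_2$, with $\|uv\px u\|_{L^2}\le\|u\|_{L^\infty}\|v\|_{L^\infty}\|\px u\|_{L^2}$ and the $P_j$-terms bounded in $L^2$ by \eqref{PjSj} and item (1), so that $\|\pt u(t,\cdot)\|_{L^2}\le C$ uniformly and integration in $t$ gives \eqref{uL}.

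The main obstacle is the careful accounting for the flat sets $D_W(t)$ and $D_Z(t)$: one must verify that the change of variables genuinely restricts to $\mathbb{R}\setminus(D_W(t)\cup D_Z(t))$, that $\px u=\tan\frac{W}{2}$ and $\px v=\tan\frac{Z}{2}$ are well defined a.e.\ in $x$ and lie in $L^2$ despite blowing up on these sets, and---most delicately---that the discarded contribution carries the correct sign to produce the inequality in item (1) while vanishing precisely off $N_W\cup N_Z$ to produce the equalities in item (4). Lemma \ref{lHQ21}, giving $V=0$ on $D_W$ and $U=0$ on $D_Z$, is what ultimately ties these defects to the singular parts of the energy measures, although the conservation identities themselves follow from the bookkeeping above. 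Making the $L^2$-Lipschitz estimate fully rigorous likewise requires justifying the weak time-derivative identity for $u$, which is where the construction reconnects with the weak formulation \eqref{uvw}.
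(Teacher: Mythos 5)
Your treatment of items (1), (3) and (4) is correct and follows essentially the paper's own route: the pullback $x=y(t,\xi)$ with $dx=q\cos^2\frac{W}{2}\cos^2\frac{Z}{2}\,d\xi$, the identification of the transformed integrands with those of \eqref{cn}, \eqref{gcn}, \eqref{hcn}, and the sign and support analysis of the defect over $D_W(t)$, $D_Z(t)$ reproduce \eqref{estH1} and the subsequent computations in Appendix B. Your spatial H\"older bound via Cauchy--Schwarz on $\pxi U=\frac{q}{2}\sin W\cos^2\frac{Z}{2}$ is a mildly different (and equally valid) route to what the paper gets by citing the Sobolev embedding $H^1\hookrightarrow C^{0,1/2}$ together with $\|\px u\|_{L^2}^2\le E_{u_0}$; the time part of your H\"older argument is the same two-label decomposition the paper uses with $\xi_1$ and $\hat\xi_1$.

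The one genuine gap is item (2). You propose to read off $\|\pt u(t,\cdot)\|_{L^2}\le C$ from the Eulerian identity $\pt u=-uv\px u-\px P_1-P_2$ and integrate in $t$. But $(u,v)$ is only a weak solution: at this stage of the construction you do not know that $t\mapsto u(t,\cdot)$ is absolutely continuous into $L^2$ with that pointwise derivative, and establishing exactly that is the content of \eqref{uL} --- so the argument as stated is circular. You acknowledge the issue, but deferring to the weak formulation \eqref{uvw} does not repair it, since \eqref{uvw} is an identity for $\px u$ tested against $\px\phi_u$, not a strong $L^2$-valued evolution equation for $u$. The paper avoids this entirely by staying Lagrangian: for each $x$ it takes $\xi_1,\xi_2$ with $x=y(t_1,\xi_1)=y(t_2,\xi_2)$ and splits
\begin{equation*}
u(t_1,x)-u(t_2,x)=\bigl(U(t_1,\xi_1)-U(t_1,\xi_2)\bigr)+\bigl(U(t_1,\xi_2)-U(t_2,\xi_2)\bigr),
\end{equation*}
bounding the first bracket by $\int_{x-\delta}^{x+\delta}|\px u(t_1,z)|\,dz$ with $\delta=(E_{u_0}E_{v_0})^{1/2}|t_1-t_2|$ (this is the rigorous surrogate for your $\|uv\px u\|_{L^2}$ term) and the second by $\bigl|\int_{t_1}^{t_2}|\px P_1+P_2|(\tau,\xi_2)\,d\tau\bigr|$ using \eqref{UVd} and \eqref{PjSj}; squaring, changing variables back to $\xi$ for the second piece, and applying Cauchy--Schwarz and Fubini then yields \eqref{ud3} and hence \eqref{uL}. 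Your estimates have the right structure and constants, but you should replace the formal $\pt u$ computation with this decomposition to close the argument.
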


\begin{proof}
The proof is presented in Appendix \ref{AS6}.
\end{proof}

Next, we have that $(u,v)$ satisfies the 
weak formulations \eqref{uvw}:

\begin{proposition}\label{wsol}
The vector function $(u,v)(t,x)$ defined by 
\eqref{uvdef} satisfies 
the equations \eqref{uvw}.
\end{proposition}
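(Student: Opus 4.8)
The plan is to verify the weak formulation \eqref{uvw} by transforming the integrals from physical variables $(t,x)$ to the Lagrangian variables $(t,\xi)$ via the change of variables $x = y(t,\xi)$, where $y$ is the characteristic defined in \eqref{char1}. The key observation is that under this substitution, $dx = \pxi y\, d\xi = \left(q\cos^2\frac{W}{2}\cos^2\frac{Z}{2}\right)d\xi$ by \eqref{pxi-y}, and the derivatives $\px u$, $\px v$ become $\tan\frac{W}{2}$, $\tan\frac{Z}{2}$ respectively by \eqref{pxuv}. The crucial point is that although $\tan\frac{W}{2}$ and $\tan\frac{Z}{2}$ blow up on the sets $D_W(t)$ and $D_Z(t)$, the product with the Jacobian stays bounded: for instance $(\px u)\,\pxi y = \frac{q}{2}\sin W\cos^2\frac{Z}{2} = \pxi U$, which is well-defined and smooth in $\xi$. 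This cancellation is what makes the entire computation legitimate.

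\textbf{The main steps} proceed as follows. First, I would take a test function $\phi_u\in C^\infty((-T,T)\times\mathbb{R})$ and introduce its pullback $\Phi(t,\xi) := \phi_u(t,y(t,\xi))$. Computing derivatives via the chain rule gives $\pt\Phi = (\pt\phi_u)(t,y) + (\px\phi_u)(t,y)\,UV$ (using $\pt y = UV$ from \eqref{char1}) and $\pxi\Phi = (\px\phi_u)(t,y)\,\pxi y$. The combination $\pt\phi_u + uv\,\px\phi_u$ appearing in \eqref{uvw} therefore equals exactly $\pt\Phi$ evaluated appropriately, which is the source of the simplification. Second, I would rewrite the first integrand in \eqref{uvw} in Lagrangian variables, using \eqref{pxuv}, \eqref{pxi-y}, \eqref{UVd}, and the expressions for $P_j$, $\px P_j$ from Proposition \ref{Pj}. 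The term $(\px u)(\pt\phi_u + uv\px\phi_u)\,dx$ becomes $\pxi U\cdot\pt\Phi\,d\xi$ after the Jacobian absorbs the potentially singular $\tan\frac{W}{2}$. Third, I would use the ODE system \eqref{ODE} — in particular the equation for $\pt U$ — to relate $\pt(\pxi U)$ and the algebraic source terms, then integrate by parts in $\xi$ and in $t$ to match the two contributions and verify the identity reduces to $0=0$. The second equation in \eqref{uvw} for $v$ follows by the symmetric computation with $V$, $Z$, $S_j$ in place of $U$, $W$, $P_j$.

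\textbf{The main obstacle} I anticipate is handling the singular behavior on $D_W(t)$ and $D_Z(t)$ rigorously, rather than just formally. On these sets $\px u$ is undefined (since $\cos\frac{W}{2}=0$), so one must argue that the corresponding $\xi$-set contributes nothing to the physical-variable integrals, and that all manipulations involving $\pxi U$, $\pxi V$, and the Jacobian remain valid a.e. Here Lemma \ref{lHQ21} is essential: it guarantees that on $D_W(t)$ (for a.e.\ $t\in N_W$) we have $V=0$, so that the transport coefficient $uv$ and the various products entering the source terms vanish precisely where the singularity would otherwise appear. A second technical subtlety is justifying the change of variables itself when $y(t,\cdot)$ is only nondecreasing (not strictly increasing): on intervals where $\pxi y = 0$, Proposition \ref{propy}(3) ensures $U$ and $V$ are constant, so the pushforward to physical space is still well-defined and the integrals transform correctly. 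Once these measure-theoretic points are dispatched, the remaining computation is a routine (if lengthy) verification that the Lagrangian integrands combine to a total $t$-derivative, whose integral over $(-T,T)$ vanishes by the compact support of $\phi_u$.
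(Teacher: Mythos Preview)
Your proposal is correct and follows essentially the same approach as the paper, which defers the details to \cite[Section VI, Step 7]{HQ21}: pull back the test function to Lagrangian coordinates via $\Phi(t,\xi)=\phi_u(t,y(t,\xi))$ so that $\pt\Phi=\pt\phi_u+uv\,\px\phi_u$, use the identities $\pxi U=\frac{q}{2}\sin W\cos^2\frac{Z}{2}$ and $\pt(\pxi U)=\pxi(\pt U)=-\pxi(\px P_1+P_2)=p_1-(\pxi y)(P_1+\px P_2)$ to reduce the integrand to a total $t$-derivative, and invoke Lemma~\ref{lHQ21} to kill the residual $\frac{q}{2}V\cos^2\frac{Z}{2}$ contribution from $p_1$ on $D_W(t)$ when passing between integrals over $\mathbb{R}$ and $\mathbb{R}\setminus(D_W\cup D_Z)$.
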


\begin{proof}
A detailed argument is presented 
in \cite[Section VI, Step 7]{HQ21}.
\end{proof}

In the proposition presented below, we 
introduce the Radon measures $\lambda^{(u)}_t$, 
$\lambda^{(v)}_t$, and $\lambda^{(uv)}_t$
which enable us to extend the conservation laws 
from Definition \ref{defsc}, 
item (3), to \textit{all} $t\in\R$.

\begin{proposition}\label{mcl}
Assume that $(u_0,v_0)\in\Sigma$, 
where $\Sigma$ is defined in \eqref{Sig}.
Define the Radon measures 
\begin{equation}\label{rm}
	\begin{split}
		&\lambda^{(u)}_t\left([a,b]\right)
		=\int_{\{\xi:y(t,\xi)\in[a,b]\}}
		\left(q\sin^2\frac{W}{2}
		\cos^2\frac{Z}{2}\right)(t,\xi)\,d\xi,
		\\
		&\lambda^{(v)}_t\left([a,b]\right)
		=\int_{\{\xi:y(t,\xi)\in[a,b]\}}
		\left(q\cos^2\frac{W}{2}
		\sin^2\frac{Z}{2}\right)(t,\xi)\,d\xi,
		\\
		&\lambda^{(uv)}_t\left([a,b]\right)
		=\int_{\{\xi:y(t,\xi)\in[a,b]\}}
		\left(q\sin^2\frac{W}{2}
		\sin^2\frac{Z}{2}\right)(t,\xi)
		\,d\xi,
	\end{split}
\end{equation}
where $(U,V,W,Z,q)$ represents the unique 
global solution of the ODE system \eqref{ODE}-\eqref{id} 
as established in Theorem \ref{gwp}. The properties of 
$\lambda^{(u)}_t$, $\lambda^{(v)}_t$ and $\lambda^{(uv)}_t$
are as follows:
\begin{enumerate}
	\item the absolute continuous and singular parts 
	meet the specifications in Definition \ref{defsc}, item (2)(a);
	
	\item the conservation laws in 
	Definition \ref{defsc}, item (2)(b) hold;
	
	\item inequality \eqref{csho} is valid;

	\item the transport equations, along with the 
	source terms described in Definition \ref{defsc}, 
	specifically item (5), are valid.
\end{enumerate}
\end{proposition}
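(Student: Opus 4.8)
The plan is to carry out every computation in the Lagrangian variable $\xi$ and transport it to the physical variable $x$ through the monotone map $y(t,\cdot)$ of \eqref{char1}, whose two governing facts are the area formula $\int_{\R}g(x)\,dx=\int_{\R}g(y(t,\xi))\pxi y\,d\xi$ and the identity $\pxi y=q\cos^2\frac{W}{2}\cos^2\frac{Z}{2}$ from \eqref{pxi-y}. Each measure in \eqref{rm} is, by construction, the push-forward under $y(t,\cdot)$ of a density in $\xi$; for instance $\int_{\R}g\,d\lambda^{(u)}_t=\int_{\R}g(y(t,\xi))\,q\sin^2\frac{W}{2}\cos^2\frac{Z}{2}\,d\xi$, and similarly for $\lambda^{(v)}_t$ and $\lambda^{(uv)}_t$. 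Since $\pxi y$ vanishes exactly on $D_W(t)\cup D_Z(t)$ (see \eqref{DN}), $y(t,\cdot)$ maps this set to a Lebesgue-null set in $x$, which is what carries the singular parts.

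For item (1) I would split $\R$ (in $\xi$) into the set $\{\pxi y>0\}=\R\setminus(D_W(t)\cup D_Z(t))$ and its complement. On the former, the change of variables produces an absolutely continuous measure whose Radon--Nikodym density is the ratio of the two $\xi$-densities; by \eqref{pxuv} this ratio is $\sin^2\frac{W}{2}/\cos^2\frac{W}{2}=(\px u)^2$ for $\lambda^{(u)}_t$, and analogously $(\px v)^2$ and $(\px u)^2(\px v)^2$ for $\lambda^{(v)}_t$ and $\lambda^{(uv)}_t$, matching Definition \ref{defsc}, item (2)(a). The complement carries the singular part: on $D_W(t)$ one has $\sin^2\frac{W}{2}=1$, so the singular part of $\lambda^{(u)}_t$ is supported on $y(t,D_W(t))$, and Lemma \ref{lHQ21} (which gives $V=0$ there for a.e.\ $t$) identifies this support with $\{v(t,\cdot)=0\}$; the cases of $\lambda^{(v)}_t$ and $\lambda^{(uv)}_t$ are identical, using $\sin^2\frac{Z}{2}=1$ on $D_Z(t)$ and $U=0$ (respectively $uv=0$) there.

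Item (2) is then bookkeeping against the conservation laws of Section \ref{GS}. Writing $\int u^2\,dx=\int U^2 q\cos^2\frac{W}{2}\cos^2\frac{Z}{2}\,d\xi$ and adding $\lambda^{(u)}_t(\R)=\int q\sin^2\frac{W}{2}\cos^2\frac{Z}{2}\,d\xi$ reproduces the integrand of \eqref{cn}, giving $E_{u_0}$ (and symmetrically $E_{v_0}$). For the $H_0$-law I would match the five summands of \eqref{hcn} term by term: the change of variables sends $3u^2v^2\,dx$, $\int u^2\,d\lambda^{(v)}_t$, $\int v^2\,d\lambda^{(u)}_t$, $4uv(\px u)(\px v)\,dx$ and $-\lambda^{(uv)}_t(\R)$ precisely onto the five terms of \eqref{hcn}, the fourth via $4UV\tan\frac{W}{2}\tan\frac{Z}{2}\,q\cos^2\frac{W}{2}\cos^2\frac{Z}{2}=qUV\sin W\sin Z$. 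Item (3) is then immediate: substituting the just-proved $H_0$-law into \eqref{csho} reduces it to $\lambda^{(uv)}_t(\R)\ge\int(\px u)^2(\px v)^2\,dx$, which holds because the right-hand side equals $\lambda^{(uv,ac)}_t(\R)$ and the singular part of $\lambda^{(uv)}_t$ is nonnegative.

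The substance, and the expected main obstacle, is item (4). Fixing a test function $\phi$ and setting $\Phi(t,\xi)=\phi(t,y(t,\xi))$, I use $\pt y=UV$ to get $\pt\Phi=(\pt\phi+uv\px\phi)(t,y)$, so the first integral of Remark \ref{wms} becomes $\int_{-T}^T\int_{\R}\pt\Phi\,q\sin^2\frac{W}{2}\cos^2\frac{Z}{2}\,d\xi\,dt$; integrating by parts in $t$ (boundary terms vanish by compact support) turns it into $-\int_{-T}^T\int_{\R}\Phi\,\pt(q\sin^2\frac{W}{2}\cos^2\frac{Z}{2})\,d\xi\,dt$. The core step is to evaluate this $t$-derivative by inserting $\pt q$, $\pt W$, $\pt Z$ from \eqref{ODE}; after the $V\sin W$ contributions cancel against one another and the $(UV^2-S_1-\px S_2)$ contributions cancel as well, one is left with the clean identity
\[
\pt\left(q\sin^2\frac{W}{2}\cos^2\frac{Z}{2}\right)
=q\sin W\cos^2\frac{Z}{2}\left(U^2V-P_1-\px P_2\right)
+\frac{q}{2}U\sin^2\frac{W}{2}\sin Z.
\]
On $\R\setminus(D_W(t)\cup D_Z(t))$ this right-hand side coincides exactly with the pull-back $f(t,y)\pxi y$ of the source $f=2(\px u)(u^2v-P_1-\px P_2)+u(\px u)^2\px v$ (using $\px u=\tan\frac{W}{2}$, $\px v=\tan\frac{Z}{2}$ and $\pxi y=q\cos^2\frac{W}{2}\cos^2\frac{Z}{2}$), so the two integrals of Remark \ref{wms} cancel off the degenerate set. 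The delicate point is the residual on $D_W(t)$: there $\sin W=0$ and $\sin^2\frac{W}{2}=1$, so the $t$-derivative equals $\frac{q}{2}U\sin Z$, whereas $f(t,y)\pxi y=0$ because $\pxi y=0$; hence the leftover is exactly $-\int_{-T}^T\int_{D_W(t)}\Phi\,\frac{q}{2}U\sin Z\,d\xi\,dt$. This is precisely where the hypothesis $\mathrm{meas}(N_W)=0$ is indispensable, since it forces $\mathrm{meas}(D_W(t))=0$ for a.e.\ $t$ and kills the residual, whence $\lambda^{(u)}_t$ solves the stated equation in the sense of Remark \ref{wms}. The argument for $\lambda^{(v)}_t$ is the mirror image and needs $\mathrm{meas}(N_Z)=0$; for $\lambda^{(uv)}_t$ one repeats the computation with density $q\sin^2\frac{W}{2}\sin^2\frac{Z}{2}$, matches the richer source \eqref{wuv}, and now discards residuals supported on both $D_W(t)$ and $D_Z(t)$, which is why both $\mathrm{meas}(N_W)=0$ and $\mathrm{meas}(N_Z)=0$ must be assumed.
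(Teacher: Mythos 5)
Your proposal is correct and follows essentially the same route as the paper's proof: the same push-forward/change-of-variables decomposition over $D_W(t)$, $D_Z(t)$ for item (1) together with Lemma \ref{lHQ21}, the same term-by-term matching against \eqref{cn} and \eqref{hcn} for items (2)--(3) (your reduction of \eqref{csho} to nonnegativity of the singular part of $\lambda_t^{(uv)}$ is the paper's $\tilde A_1$ argument in equivalent form), and for item (4) the identical key identity \eqref{cn-t2} (and its analogues), with the residual integral over $D_W(t)$ killed by $\mathrm{meas}(N_W)=0$ exactly as in the paper.
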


\begin{proof}
Let us prove item (1) for $\lambda_t^{(u)}$.
Introducing set 
$
\mathcal{Y}_t(a,b)=
\{\xi:y(t,\xi)\in[a,b]\},
$
we obtain from \eqref{rm}, \eqref{pxi-y} and
\eqref{pxuv} that
\begin{equation}\label{lam-u}
	\lambda_t^{(u)}\left([a,b]\right)
	=\int_a^b(\px u)^2(t,x)\,dx
	+\int_{\mathcal{Y}_t(a,b)\cap D_W(t)}
	\left(q
	\cos^2\frac{Z}{2}\right)(t,\xi)\,d\xi,
\end{equation}
recalling the definition 
\eqref{DN} of $D_W(t)$. Equation \eqref{lam-u} implies that 
$d\lambda_t^{(u,ac)}=(\px u)^2\,dx$ and, in view of 
Lemma \ref{lHQ21}, the nonzero singular part is supported 
on the set where $v(t,\cdot)=0$ for a.e.\,\,$t\in\R$.
The proof of item (1) for $\lambda_t^{(v)}$ and 
$\lambda_t^{(uv)}$ is similar.

To prove item (2) we notice that
\begin{equation}\label{uux}
\begin{split}
	&\int_{-\infty}^\infty\left(
	qU^2\cos^2\frac{W}{2}\sin^2\frac{Z}{2}
	\right)(t,\xi)\,d\xi
	=\int_{-\infty}^\infty
	u^2(t,x)\,d\lambda^{(v)}_t,\\
	&\int_{-\infty}^\infty\left(
	qV^2\sin^2\frac{W}{2}\cos^2\frac{Z}{2}
	\right)(t,\xi)\,d\xi
	=\int_{-\infty}^\infty
	v^2(t,x)\,d\lambda^{(u)}_t,
\end{split}
\end{equation}
where the variable $\xi$ has been transformed 
to $x = y(t, \xi)$ and we have used \eqref{rm}.
Now item (2) follows from \eqref{cn}, \eqref{hcn}, 
\eqref{uvdef}, \eqref{pxuv}, and \eqref{uux}.

Item (3) follows from \eqref{uux}, the inequality
\begin{equation*}
	\begin{split}
		-\int_{-\infty}^{\infty}
		\left(q\sin^2\frac{W}{2}\sin^2\frac{Z}{2}
		\right)(t,\xi)\,d\xi
		\leq-\int_{\mathbb{R}\setminus(D_W(t)\cup D_Z(t))}
		\left(q\sin^2\frac{W}{2}\sin^2\frac{Z}{2}
		\right)(t,\xi)\,d\xi,
	\end{split}
\end{equation*}
for any $t\in\R$,
and the identity
\begin{equation*}
	\int_{-\infty}^{\infty}
	\tilde{A}_1(t,\xi)\,d\xi=
	\int_{\mathbb{R}\setminus(D_W(t)\cup D_Z(t))}
	\tilde{A}_1(t,\xi)\,d\xi,
\end{equation*}
for any $t\in\R$, where 
$$
\tilde{A}_1=q\left(3U^2V^2\cos^2\frac{W}{2}\cos^2\frac{Z}{2}
+UV\sin W\sin Z\right).
$$

It remains to prove item (4). 
Let us establish the transport equation 
for $\lambda^{(u)}_t$. For any 
$\Phi_U(t,\xi)=\phi_u(t,y(t,\xi))$, 
where $\phi_u$ is a test function 
in \eqref{uvw}, we have (the arguments 
$t,\xi,x$ are dropped here)
\begin{equation}\label{tI}
	\tilde{I}:=\int_{-T}^T\int_{-\infty}^{\infty}
	\left(q\sin^2\frac{W}{2}\cos^2\frac{Z}{2}\right)
	\pt\Phi_U\,d\xi\,dt=
	\int_{-T}^T\int_{-\infty}^{\infty}
	(\pt\phi_u+uv\px\phi_u)
	\, d\lambda^{(u)}_t\,dt,
\end{equation}
where we have changed the 
variables: $x=y(t,\xi)$, 
used \eqref{rm} and that 
$\pt\Phi_U=\pt\phi_u+uv\px\phi_u$.

On the other hand, using
\begin{equation}\label{cn-t2}
	\pt\left(
	q\sin^2\frac{W}{2}\cos^2\frac{Z}{2}
	\right)=
	q\sin W\cos^2\frac{Z}{2}\left(
	U^2V-P_1-\px P_2
	\right)
	+\frac{q}{2}U\sin^2\frac{W}{2}\sin Z,
\end{equation}
and
integrating by parts,
we obtain (here and below, 
we omit the argument $t$ from $D_W$ and $D_Z$):
\begin{equation*}
	\begin{split}
		\tilde{I}&=
		-\int_{-T}^T\int_{\mathbb{R}\setminus\{D_W\cup D_Z\}}
		\left(q\sin W\cos^2\frac{Z}{2}\left(
		U^2V-P_1-\px P_2
		\right)
		+\frac{q}{2}U\sin^2\frac{W}{2}
		\sin Z\right)\Phi_U\,d\xi\,dt
		\\ & \quad \, 
		-\int_{-T}^T\int_{D_W}
		\left(\frac{q}{2}U\sin Z\right)\Phi_U\,d\xi\,dt.
	\end{split}
\end{equation*}
Assuming that $\mathrm{meas}(N_W)=0$ and changing 
the variables by setting $x=y(t,\xi)$, 
we can conclude that
\begin{equation*}
	\tilde{I}=
	-\int_{-T}^T\int_{-\infty}^{\infty}
	\left(
	(2\px u)(u^2v-P_1-\px P_2)+u(\px u)^2\px v
	\right)\phi_{u}\,dx\,dt,
\end{equation*}
which, combined with \eqref{tI}, leads us to 
derive the desired transport equation for the measure 
$\lambda^{(u)}_t$ (see Remark \ref{wms} for additional context). 
Similarly, the measure $\lambda^{(v)}_t$ can be 
treated using the same method.

Finally, we derive the equation for $\lambda^{(uv)}_t$. 
Considering that
\begin{equation}\label{cn-t3}
	\begin{split}
		\pt\left(q\sin^2\frac{W}{2}\sin^2\frac{Z}{2}\right)
		=&q\sin W\sin^2\frac{Z}{2}
		\left(U^2V-(P_1+\px P_2)\right)\\
		&+q\sin^2\frac{W}{2}\sin Z
		\left(UV^2-(S_1+\px S_2)\right),
	\end{split}
\end{equation}
we obtain
\begin{equation*}
	\begin{split}
		&\int_{-T}^T\int_{-\infty}^{\infty}
		\left(q\sin^2\frac{W}{2}\sin^2\frac{Z}{2}\right)
		\pt\Phi_{U,V}\,d\xi\,dt\\
		&=-\int_{-T}^T
		\int_{\mathbb{R}\setminus\{D_W\cup D_Z\}}
		\pt\left(q\sin^2\frac{W}{2}\sin^2\frac{Z}{2}
		\right)\Phi_{U,V}\,d\xi\,dt\\
		&\quad\,+\int_{-T}^T\int_{D_Z}
		\left(q(P_1+\px P_2)
		\sin W\right)\Phi_{U,V}\,d\xi\,dt
		+\int_{-T}^T\int_{D_W}
		\left(q(S_1+\px S_2)
		\sin Z\right)\Phi_{U,V}\,d\xi\,dt.
	\end{split}
\end{equation*}
Assuming that the measure of both sets $N_W$ and $N_Z$ 
is zero: $\mathrm{meas}(N_W)=\mathrm{meas}(N_Z)=0$, 
and by performing a change of 
variables where $x = y(t, \xi)$, we 
derive the transport equation for $\lambda^{(uv)}_t$.

\end{proof}

In the proposition below, we demonstrate the 
continuous dependence of the 
solution on the initial data, 
measured in the uniform norm.

\begin{proposition}\label{Pdsm}
Assume that $d_\Sigma\left((u_{0,n},v_{0,n}),
(u_0,v_0)\right)\to0$ as $n\to\infty$,
where the metric space $(\Sigma,d_\Sigma)$ is 
defined by \eqref{Sig} and \eqref{d^2}.
Then we have
\begin{equation}\label{dsm}
	\left(
	\|(u_n-u)\|_{L^\infty([-T,T]\times\mathbb{R})}
	+\|(v_n-v)\|_{L^\infty([-T,T]\times\mathbb{R})}
	\right)\to 0,
	\quad n\to\infty,
\end{equation}
for all $T>0$. Here, $(u_n,v_n)$ are the solutions 
of \eqref{t-c-N-n} that correspond to 
the initial data $(u_{0,n},v_{0,n})$, 
which are defined using the same 
procedure as $(u,v)$ in Section \ref{ds}.
\end{proposition}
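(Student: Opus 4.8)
The plan is to pass to the Lagrangian variables $(U,V,W,Z,q)$, where the solution is governed by the ODE system \eqref{ODE} whose right-hand side is Lipschitz on the invariant set $\Omega$ (Proposition \ref{LipP}), and then transfer the resulting convergence back to the physical variables. The starting point is an elementary but decisive inequality. Since $y(t,\cdot)$ is onto $\R$ (Proposition \ref{propy}), every $x$ equals $y(t,\xi)$ for some $\xi$; writing $u(t,x)=U(t,\xi)$ and $u_n(t,y_n(t,\xi))=U_n(t,\xi)$ via \eqref{uvdef} and using the H\"older bound \eqref{uH} to pass from $y(t,\xi)$ to $y_n(t,\xi)$, one obtains
\begin{equation*}
\norm{u_n(t,\cdot)-u(t,\cdot)}_{L^\infty}
\le\norm{U_n(t,\cdot)-U(t,\cdot)}_{L^\infty}
+C\norm{y_n(t,\cdot)-y(t,\cdot)}_{L^\infty}^{1/2},
\end{equation*}
and analogously for $v$. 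The constant $C$ is uniform in $n$ because $E_{u_{0,n}},E_{v_{0,n}},H_{0,n}$ are continuous functionals on $(\Sigma,d_\Sigma)$ (using \eqref{d^2}, $L^\infty$-convergence from $H^1$, and $L^2$-convergence of the product $(\px u_{0,n})\px v_{0,n}$) and hence converge, so all the $(u_n,v_n)$ share a common H\"older constant and live in a common $\Omega$. Thus it suffices to prove \textbf{(I)} $U_n\to U$ and $V_n\to V$ uniformly on $[-T,T]\times\R$, and \textbf{(II)} $y_n\to y$ uniformly on $[-T,T]\times\R$.

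First I would establish convergence of the Lagrangian initial data. The density $(1+(\px u_{0,n})^2)(1+(\px v_{0,n})^2)$ converges in $L^1$, since $d_\Sigma$-convergence yields $\px u_{0,n}\to\px u_0$, $\px v_{0,n}\to\px v_0$ in $L^2$ and $(\px u_{0,n})\px v_{0,n}\to(\px u_0)\px v_0$ in $L^2$, whence every term of the expanded product converges in $L^1$. By the defining relation \eqref{y0} this gives $y_{0,n}\to y_0$ uniformly on $\R$. Combined with $\norm{u_{0,n}-u_0}_{L^\infty}\to0$ and the $\tfrac12$-H\"older continuity of $u_0$, it follows that $U_{0,n}=u_{0,n}\circ y_{0,n}\to U_0$ and $V_{0,n}\to V_0$ uniformly. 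It then remains to show $W_{0,n}\to W_0$ and $Z_{0,n}\to Z_0$ in $L^2$; granting this, the identities \eqref{pxiUV} (with $q_0\equiv1$) upgrade the convergence of $U_{0,n},V_{0,n}$ to the $H^1\cap W^{1,4}$ topology.

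Next I would propagate this convergence in time. Because all $(U_n,V_n,W_n,Z_n,q_n)$ lie in a common $\Omega$ with uniform constants $\kappa_0,R_1,R_2$, the bounds of Proposition \ref{LipP} apply simultaneously; I would revisit those estimates to verify that the differences of the nonlocal terms are controlled by the differences of $U,V$ in $H^1\cap W^{1,4}$ and of $q$ in $L^\infty$ together with the differences of $W,Z$ measured in $L^2$ \emph{only} (the $L^\infty$ bounds on $W,Z$ entering solely through membership in $\Omega$). A Gronwall argument on $[-T,T]$ in the norm that is $H^1\cap W^{1,4}$ in the $U,V$ slots, $L^2$ in the $W,Z$ slots, and $L^\infty$ in the $q$ slot then transfers the initial-data convergence to uniform-in-$t$ convergence, giving in particular \textbf{(I)}. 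Finally, from \eqref{char1},
\begin{equation*}
y_n(t,\xi)-y(t,\xi)=(y_{0,n}-y_0)(\xi)
+\int_0^t\left(U_nV_n-UV\right)(\tau,\xi)\,d\tau,
\end{equation*}
so \textbf{(I)} together with the $L^\infty$ bounds \eqref{apest1} yields \textbf{(II)}, and substituting \textbf{(I)}--\textbf{(II)} into the key inequality completes the proof.

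The hard part will be the $L^2$-convergence of the angles $W_{0,n}\to W_0$, $Z_{0,n}\to Z_0$ and, correspondingly, closing the Gronwall estimate in the $L^2$ rather than $L^\infty$ topology for these variables (indeed $L^\infty$-convergence of $W_{0,n}$ genuinely fails, which is why only uniform convergence of $u,v$ is asserted). The obstacle is that $W_{0,n}=2\arctan(\px u_{0,n}\circ y_{0,n})$ involves pointwise values of $\px u_{0,n}$ composed with the $n$-dependent characteristic $y_{0,n}$, while $L^2$-convergence of $\px u_{0,n}$ provides only a.e.\ convergence along subsequences. I would address this by changing variables to the physical line, where $\norm{W_{0,n}-W_0}_{L^2(d\xi)}$ becomes a weighted $L^2(dx)$-quantity with weight $(1+(\px u_{0,n})^2)(1+(\px v_{0,n})^2)$; combining the pointwise bound $\abs{2\arctan a-2\arctan b}\le\min\{2\abs{a-b},\pi\}$ with the $L^1$-convergence of this weight and a Vitali/uniform-integrability argument controls the principal contribution, while a density reduction (approximating $\px u_0$ by continuous compactly supported functions) controls the error from replacing $y_0$ by $y_{0,n}$ inside $\arctan\px u_0$.
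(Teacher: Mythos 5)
Your overall route coincides with the paper's: reduce \eqref{dsm} to uniform convergence of $(U_n,V_n)$ and $y_n$ via the H\"older bound \eqref{uH}; prove $y_{0,n}\to y_0$ in $L^\infty$ from the $L^1$-convergence of the density $(1+(\px u_{0,n})^2)(1+(\px v_{0,n})^2)$; get $U_{0,n}\to U_0$, $V_{0,n}\to V_0$ uniformly; prove $W_{0,n}\to W_0$, $Z_{0,n}\to Z_0$ in $L^2$ by exactly the two-part decomposition you describe (change of variables to the physical line plus a Lusin/Vitali-type lemma for the principal term, and a density reduction in $C_c$ for the error from replacing $y_0$ by $y_{0,n}$ inside $\arctan\px u_0$); and close with a Gronwall estimate in a mixed norm that is deliberately only $L^2$ in the $W,Z$ slots. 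You also correctly identify why the Lipschitz estimate of Proposition \ref{LipP} cannot be reused verbatim.

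There is, however, one step that fails as stated: the choice of the $L^\infty$ norm for the $q$-slot of the Gronwall functional. The equation for $q$ in \eqref{ODE} has source terms proportional to $q(\cdots)\sin W$ and $q(\cdots)\sin Z$, so $\pt(q-q_n)$ contains a term of size $q_n A_n(\sin W-\sin W_n)$, whose $L^\infty$ norm is controlled only by $\|W-W_n\|_{L^\infty}$ — a quantity that is neither in your functional nor small (as you yourself note, $L^\infty$-convergence of the angles genuinely fails). Consequently $\tfrac{d}{dt}\|q-q_n\|_{L^\infty}$ cannot be bounded by your functional, and indeed $\|q-q_n\|_{L^\infty}\to0$ is false in general: an $O(1)$ discrepancy of $W_{0,n}$ on a small set produces an $O(t)$ discrepancy of $q_n$ there. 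The paper resolves this by measuring $q-q_n$ in $L^2$ (the factors $\sin W$, $\sin Z$ supply the needed $L^2$ integrability, since $\lvert\sin W\rvert\le\lvert W\rvert$ with $W\in L^2$), and then controlling the differences of the nonlocal terms in $L^2\cap L^\infty$ by convolving the $L^1\cap L^2$ kernel $\Gamma$ against $L^2$ data. A secondary, less consequential issue: the $W^{1,4}$ component of your $U,V$ slot does not close linearly either, since $\|W-W_n\|_{L^4}$ is only bounded by $\|W-W_n\|_{L^2}^{1/2}$ via interpolation with the uniform $L^\infty$ bound; but that slot is superfluous, because the nonlocal terms $p_1,s_1,p_2,s_2$ involve $U,V$ without derivatives and the conclusion only requires $L^\infty$ control of $U-U_n$, $V-V_n$, which is the norm the paper uses.
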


\begin{proof}
\textbf{Step 1.} Let us show that
\begin{equation}\label{y0y0n}
	\|y_{0,n}-y_0\|_{L^\infty}\to 0,\quad
	n\to\infty,
\end{equation}
where $y_0(\xi)$ is given in \eqref{y0}, while
$y_{0,n}(\xi)$ is defined as follows:
\begin{equation}\label{y0n}
	\int_0^{y_{0,n}(\xi)}
	\left(1+(\px u_{0,n})^2(x)\right)
	\left(1+(\px v_{0,n})^2(x)\right)dx=\xi.
\end{equation}
Consequently, $y_0(\xi)$ and 
$y_{0,n}(\xi)$ are defined implicitly 
by integrals involving $F = (1 + (\partial_x u_0)^2)
(1 + (\partial_x v_0)^2)$ and 
$F_n = (1 + (\partial_x u_{0,n})^2)
(1 + (\partial_x v_{0,n})^2)$, respectively. 
Subtracting these integral definitions gives
$$
\int_{y_{0,n}}^{y_0} F(x) \, dx = \int_0^{y_{0,n}} 
(F_n(x) - F(x)) \, dx.
$$
Using $F \geq 1$, the interval length 
$|y_{0,n}(\xi) - y_0(\xi)|$ can be controlled by 
the integral of $|F_n - F|$, leading to
$$
|y_{0,n}(\xi) - y_0(\xi)| \leq \int_{-\infty}^\infty 
\left| F_n(x) - F(x) \right| \, dx.
$$
Expanding the product terms in $F_n$ and $F$ gives
\begin{equation}\label{y0y0n1}
	\begin{split}
		|y_{0,n}(\xi)-y_0(\xi)|\leq
		\int_{-\infty}^{\infty}
		&\left(
		\left|(\px u_{0,n})^2-(\px u_{0})^2\right|
		+
		\left|(\px v_{0,n})^2-(\px v_{0})^2\right|
		\right.\\
		&\,\,\left.+
		\left|((\px u_{0,n})\px v_{0,n})^2
		-((\px u_{0})\px v_{0})^2\right|
		\right)(x)\,dx,
	\end{split}
\end{equation}
for all $\xi\in\mathbb{R}$.

Using that $\px u_{0,n}\to\px u_0$,
$\px v_{0,n}\to\px v_0$ and 
$(\px u_{0,n})\px v_{0,n}\to
(\px u_{0})\px v_{0}$
in $L^2$ (see \eqref{d^2}), the  
estimate \eqref{y0y0n1} 
implies \eqref{y0y0n}.

\medskip
	
\textbf{Step 2.}
Let us prove that
\begin{equation}\label{U0U0n}
	\left\|
	U_{0,n}-U_0
	\right\|_{L^\infty}
	\to 0,\,\,
	\left\|
	V_{0,n}-V_0
	\right\|_{L^\infty}
	\to 0,\quad n\to\infty.
\end{equation}
Observing that
\begin{equation*}
	\begin{split}
		\sup\limits_{\xi\in\mathbb{R}}\left|
		U_{0,n}(\xi)-U_0(\xi)
		\right|
		&=\sup\limits_{\xi\in\mathbb{R}}\left|
		u_{0,n}(y_{0,n}(\xi))-u_0(y_0(\xi))
		\right|\\
		&\leq
		\sup\limits_{\xi\in\mathbb{R}}\left|
		u_{0,n}(y_{0,n}(\xi))-u_0(y_{0,n}(\xi))
		\right|+
		\sup\limits_{\xi\in\mathbb{R}}\left|
		u_{0}(y_{0,n}(\xi))-u_0(y_{0}(\xi))
		\right|\\
		&\leq
		\sup\limits_{x\in\mathbb{R}}\left|
		u_{0,n}(x)-u_0(x)\right|
		+\sup\limits_{\xi\in\mathbb{R}}
		|y_{0,n}(\xi)-y_0(\xi)|^{3/4},
	\end{split}
\end{equation*}
and using the Sobolev embedding theorem 
$(H^1\hookrightarrow L^\infty)$
as well as \eqref{y0y0n}, we obtain 
\eqref{U0U0n} for $U_{0,n}$.
The convergence claim for $V_{0,n}$ in \eqref{U0U0n} 
can be proven similarly.

\medskip
	
\textbf{Step 3.} We will establish 
the following two limits:
\begin{equation}\label{W0W0n}
	\|W_{0,n}-W_0\|_{L^2},\,\,
	\|Z_{0,n}-Z_0\|_{L^2}\to 0,
	\quad \text{as $n\to\infty$}.
\end{equation}
Observe that
\begin{equation}\label{W0Wn}
	\begin{split}
		&\left\|W_{0,n}-W_0\right\|^2_{L^2}
		= 2\int_{-\infty}^{\infty}
		\left(
		\arctan(\px u_{0,n})(y_{0,n}(\xi))-
		\arctan(\px u_{0})(y_{0}(\xi))
		\right)^2d\xi\\
		&\qquad\qquad\leq
		4\int_{-\infty}^{\infty}
		\left(
		\arctan(\px u_{0,n})(y_{0,n}(\xi))-
		\arctan(\px u_{0})(y_{0,n}(\xi))
		\right)^2d\xi\\
		&\qquad\qquad\quad+
		4\int_{-\infty}^{\infty}
		\left(
		\arctan(\px u_{0})(y_{0,n}(\xi))-
		\arctan(\px u_{0})(y_{0}(\xi))
		\right)^2d\xi=: 4(I_{3,n}+I_{4,n}).
	\end{split}
\end{equation}
By applying the change of variables $x = y_{0,n}(\xi)$ 
to the integral $I_{3,n}$ and utilizing the 
relationship given in \eqref{y0n}, 
we derive the following result:
\begin{equation*}
	\frac{dy_{0,n}}{d\xi}
	\left(1+(\px u_{0,n})^2(y_{0,n})\right)
	\left(1+(\px v_{0,n})^2(y_{0,n})\right)=1,
	\qquad x=y_{0,n},
\end{equation*}
we obtain
\begin{equation*}
	\begin{split}
		I_{3,n}&=
		\int_{-\infty}^{\infty}
		\left(\left(\arctan(\px u_{0,n})-
		\arctan(\px u_{0})\right)^2
		\left(1+(\px u_{0,n})^2\right)
		\left(1+(\px v_{0,n})^2\right)\right)(x)\,dx
		\\
		&\leq
		\int_{-\infty}^{\infty}
		\left(\left(\arctan(\px u_{0,n})-
		\arctan(\px u_{0})\right)^2
		\left((\px u_{0,n})^2
		+(\px v_{0,n})^2
		+(\px u_{0,n})^2(\px v_{0,n})^2
		\right)\right)(x)\,dx\\
		&\quad+\int_{-\infty}^{\infty}
		(\px u_{0,n}-\px u_{0})^2\,dx,
	\end{split}
\end{equation*}
where we have used the Lipschitz 
continuity of $\arctan(\cdot)$.
Given that $\px u_{0,n}\to\px u_0$,
$\px v_{0,n}\to\px v_0$, and
$(\px u_{0,n})\px v_{0,n}\to (\px u_{0})\px v_{0}$
in $L^2$, along with the boundedness of $\arctan(\cdot)$, 
and by applying Lemma \ref{lA2}, we 
can conclude that $I_{3,n}\to 0$ 
as $n\to \infty$.

Now let us prove that $I_{4,n}\to 0$ 
as $n\to\infty$. Since $\px u_0\in L^2$, there 
exists a sequence $\{\tilde{f}_m\}_{n\in\mathbb{N}}
\subset C^{\infty}(\mathbb{R})$ with compact support, 
such that $\|\tilde{f}_m-\px u_0\|_{L^2}\to 0$ 
as $m\to\infty$. Using this, we deduce that 
\begin{equation*}
	\begin{split}
		I_{4,n}&\leq 
		2\int_{-\infty}^{\infty}
		\left(
		\arctan(\px u_{0})(y_{0,n}(\xi))-
		\arctan(\tilde{f}_m)(y_{0,n}(\xi))
		\right)^2d\xi\\
		&\quad+
		4\int_{-\infty}^{\infty}
		\left(
		\arctan(\tilde{f}_m)(y_{0,n}(\xi))-
		\arctan(\tilde{f}_m)(y_{0}(\xi))
		\right)^2d\xi\\
		&\quad
		+4\int_{-\infty}^{\infty}
		\left(
		\arctan(\tilde{f}_m)(y_{0}(\xi))-
		\arctan(\px u_{0})(y_{0}(\xi))
		\right)^2d\xi=:
		2I_{4,n,m}^{(1)}+4I_{4,n,m}^{(2)}
		+4I_{4,m}^{(3)}.
	\end{split}
\end{equation*}
By applying similar reasoning as used for $I_{3,n}$, we 
can demonstrate that $I_{4,m}^{(3)} \to 0$ as $m \to \infty$. 
Furthermore, the integral $I_{4,n,m}^{(1)}$ can be 
estimated in the following manner:
\begin{equation*}
	\begin{split}
		I_{4,n,m}^{(1)}&=
		\int_{-\infty}^{\infty}
		\left(\left(\arctan(\tilde{f}_m)-
		\arctan(\px u_{0})\right)^2
		\left(1+(\px u_{0,n})^2\right)
		\left(1+(\px v_{0,n})^2\right)\right)(x)\,dx
		\\
		&\leq
		\int_{-\infty}^{\infty}
		\left(\left(\arctan(\tilde{f}_m)-
		\arctan(\px u_{0})\right)^2
		\left(1+(\px u_{0})^2\right)
		\left(1+(\px v_{0})^2\right)\right)(x)\,dx
		\\
		&\quad+
		\pi^2
		\int_{-\infty}^{\infty}
		\left(
		\left|(\px u_{0,n})^2-(\px u_{0})^2\right|
		+
		\left|(\px v_{0,n})^2-(\px v_{0})^2\right|
		\right.
		\\
		&\qquad\qquad\qquad\left.+
		\left|((\px u_{0,n})\px v_{0,n})^2
		-((\px u_{0})\px v_{0})^2\right|
		\right)(x)\,dx,
	\end{split}
\end{equation*}
To derive this inequality,	
we begin by expanding the product 
$(1 + (\px u_{0,n})^2)\,(1 + (\px v_{0,n})^2)$ 
and comparing it with 
$(1 + (\px u_{0})^2)\,(1 + (\px v_{0})^2)$. 
Their difference turns out to be a sum 
of simpler absolute-value terms 
(for instance, $\bigl|(\px u_{0,n})^2
-(\px u_{0})^2\bigr|$). 
Next, because $\arctan(\cdot)$ takes values 
in $(-\tfrac{\pi}{2}, \tfrac{\pi}{2})$, 
we use the fact that $\bigl(\arctan(\tilde{f}_m)
-\arctan(\px u_{0})\bigr)^2 \le \pi^2$ 
to factor out $\pi^2$ when bounding that difference. 
Putting it all together, we see that the original integral 
can be separated into the main term plus an extra integral 
accounting for these product differences.
This, together with Lemma \ref{lA2}, implies that 
$I_{4,n,m}^{(1)}\to 0$ as $n,m\to\infty$.

Finally, let us show that $I_{4,n,m}^{(2)}\to 0$ 
as $n\to\infty$, for each fixed $m$. Taking into account 
\eqref{y0y0n} and that $\tilde{f}\to\px u_0$ in $L^2$, 
we conclude that for any $\ve>0$ 
there exists $R>0$ such that
\begin{equation}\label{I41}
	\left(\int_{-\infty}^{-R}
	+\int^{\infty}_{R}\right)
	\left(
	\arctan(\tilde{f}_m)(y_{0,n}(\xi))-
	\arctan(\tilde{f}_m)(y_{0}(\xi))
	\right)^2d\xi<\ve,\quad
	\mbox{for all }n,m\in\mathbb{N}.
\end{equation}
Then due to Lipschitz continuity of both $\arctan$ 
and $\tilde{f}_m$, we have the following estimate:
\begin{equation}\label{I42}
	\int_{-R}^{R}
	\left(
	\arctan(\tilde{f}_m)(y_{0,n}(\xi))-
	\arctan(\tilde{f}_m)(y_{0}(\xi))
	\right)^2d\xi
	\leq 2RL_{m,R}^2
	\|y_{0,n}-y_0\|_{L^\infty}^2,
\end{equation}
where $L_{m,R}>0$ is a Lipschitz constant of $f_m$ 
on the interval $[-R,R]$. Combining \eqref{I41} and \eqref{I42} 
we conclude that $I_{4,n,m}^{(2)}\to 0$ 
as $n\to\infty$, for any fixed $m$.

Recalling \eqref{W0Wn}, we derive \eqref{W0W0n} 
for $W_{0,n}$. Similarly, the limit for $Z_{0,n}$ 
can be determined.	

\medskip
	
\textbf{Step 4.} We observe that there is a lack of convergence 
of the initial data in the $L^\infty$ norm. 
Specifically, the limits $\|W_{0,n} - W_0\|_{L^\infty} \to 0$ 
and $\|Z_{0,n} - Z_0\|_{L^\infty} \to 0$ as $n \to \infty$ 
are not known, see \eqref{W0W0n}. Consequently, the method 
outlined in \cite[Lemma 4.1]{HQ21}, which is utilized to 
assess the Lipschitz continuity of the 
right-hand side of \eqref{ODE}, is not applicable 
in this context. Therefore, we 
must adopt an alternative approach. 
Consider (see \cite[Equation (6.8)]{BC07d})
\begin{equation*}
	\begin{split}
		F_n(t):=&\|(U-U_n)(t,\cdot)\|_{L^\infty}
		+\|(V-V_n)(t,\cdot)\|_{L^\infty}
		+\|(W-W_n)(t,\cdot)\|_{L^2}\\
		&+\|(Z-Z_n)(t,\cdot)\|_{L^2}
		+\|(q-q_n)(t,\cdot)\|_{L^2},
	\end{split}
\end{equation*}
where $(U,V,W,Z,q)$ and $(U_n,V_n,W_n,Z_n,q_n)$ 
are global solutions of the ODE 
system \eqref{ODE} with initial data 
$(U_0,V_0,W_0,Z_0,q_0)$ and 
$(U_{0,n},V_{0,n},W_{0,n},Z_{0,n},q_{0,n})$, 
respectively, see Theorem \ref{gwp}.

Our goal is to prove that
\begin{equation}\label{Fd}
	\frac{d}{dt}F_n(t)\leq C_T F_n(t),\quad 
	t\in [0,T],
\end{equation}
with some $C_T>0$ and any $T>0$. Recalling the system 
\eqref{ODE} and taking into account the uniform 
bounds derived in Section \ref{GS}, to prove 
\eqref{Fd}, it is enough to show that
\begin{equation}\label{PjPjn}
	\begin{split}
		&\|(P_j-P_{j,n})(t,\cdot)\|_{L^2\cap L^\infty},
		\,\|(\px P_j-\px P_{j,n})(t,\cdot)\|_{L^2\cap L^\infty}
		\leq C_T F_n(t), \quad j=1,2,\\
		&\|(S_j-S_{j,n})(t,\cdot)\|_{L^2\cap L^\infty},
		\,\|(\px S_j-\px S_{j,n})(t,\cdot)\|_{L^2\cap L^\infty}
		\leq C_T F_n(t), \quad j=1,2,
	\end{split}
\end{equation}
where $P_{j,n}$, $\px P_{j,n}$, $S_{j,n}$, 
and $\px S_{j,n}$ are defined analogously to 
$P_{j}$, $\px P_{j}$, $S_{j}$, and $\px S_{j}$ 
as described in Proposition \ref{Pj}. 
However, in this case, the variables 
$(U_n,V_n,W_n,Z_n,q_n)$ replace $(U,V,W,Z,q)$.

Let $E_{u_{0,n}}$, $E_{v_{0,n}}$, 
and $H_{0,n}$ be as in \eqref{consq}, but with $u_{0,n}$ 
and $v_{0,n}$ instead of $u_0$ and $v_0$, respectively. 
Since $u_{0,n}\to u_0$, $v_{0,n}\to v_0$ in $H^1$ 
and $(\partial_x u_{0,n})\partial_x v_{0,n}\to 
(\partial_x u_{0})\partial_x v_{0}$ in $L^2$, we have 
$E_{u_{0,n}}\to E_{u_{0}}$, $E_{v_{0,n}}\to E_{v_{0}}$, 
and $H_{0,n}\to H_0$. This implies that the a priori 
estimates involving $E_{u_{0,n}}$, $E_{v_{0,n}}$, 
and $H_{0,n}$ can be expressed in terms of 
$E_{u_{0}}$, $E_{v_{0}}$, $H_{0}$ and some radius $R>0$, 
such that
$$
|E_{u_{0,n}}-E_{u_{0}}|,\,
|E_{v_{0,n}}-E_{v_{0}}|,\,
|H_{0,n}-H_0|\leq R,\quad
\mbox{for all }n\in\mathbb{N}.
$$
Therefore, for simplicity, we will use the 
same notations for the a priori bounds, regardless 
of whether they involve $n$ or not.
	
Consider (see \eqref{pxP2-L} and \eqref{I1I2})
\begin{equation*}
	I_5=\int_{\xi}^{\infty}
	\left|
	\mathcal{E}(\xi,\eta)
	\left(q\sin^2\frac{W}{2}\sin Z\right)(\eta)
	-\mathcal{E}_n(\xi,\eta)
	\left(q_n\sin^2\frac{W_n}{2}\sin Z_n
	\right)(\eta)\right|\,d\eta.
\end{equation*}
Then we have (see \eqref{I1e} and \eqref{qap})
\begin{equation*}
	\begin{split}
		I_5\leq &
		\int_{\xi}^{\infty}
		\tilde{\Gamma}(\xi-\eta)
		\left|
		\left(q\sin^2\frac{W}{2}\sin Z\right)(\eta)
		-\left(q_n\sin^2\frac{W_n}{2}\sin Z_n\right)(\eta)
		\right|\,d\eta\\
		&
		+\kappa_0\int_{\xi}^{\infty}
		\left|
		\mathcal{E}(\xi,\eta)-\mathcal{E}_n(\xi,\eta)
		\right|
		|Z(\eta)|\,d\eta
		=: 
		I_{5,1}+\kappa_0 I_{5,2}.
	\end{split}
\end{equation*}
The integral $I_{5,1}$ can be 
estimated as follows (see \eqref{I11-e0}):
\begin{equation*}
	\begin{split}
		I_{5,1}\leq \tilde{\Gamma}\ast|q-q_n|
		+\kappa_0\left(
		\tilde{\Gamma}\ast|W-W_n|
		+\tilde{\Gamma}\ast|Z-Z_n|
		\right).
	\end{split}
\end{equation*}
Using the Cauchy-Schwarz inequality, we obtain
\begin{equation*}
	\begin{split}
		\left\|I_{5,1}\right\|_{L^\infty}
		\leq\left\|\tilde{\Gamma}\right\|_{L^2}
		\left\|q-q_n\right\|_{L^2}
		+\kappa_0\left\|\tilde{\Gamma}\right\|_{L^2}
		\left(\left\|W-W_n\right\|_{L^2}
		+\left\|Z-Z_n\right\|_{L^2}
		\right),
	\end{split}
\end{equation*}
while applying \eqref{coest} 
with $p=2$ yields
\begin{equation*}
	\left\|I_{5,1}\right\|_{L^2}
	\leq\left\|\tilde{\Gamma}\right\|_{L^1}
	\left\|q-q_n\right\|_{L^2}
	+\kappa_0\left\|\tilde{\Gamma}\right\|_{L^1}
	\left(\left\|W-W_n\right\|_{L^2}
	+\left\|Z-Z_n\right\|_{L^2}\right).
\end{equation*}
	
We have the following estimate 
for $I_{5,2}$ (cf.\,\,\eqref{I12-e}):
\begin{equation*}
	\begin{split}
		I_{5,2}&\leq
		\int_{\xi}^{\infty}
		\tilde{\Gamma}(\xi-\eta)|\xi-\eta|
		|Z(\eta)||(q-q_n)(\eta)|\,d\eta\\
		&\quad
		+\int_{\xi}^{\infty}
		\tilde{\Gamma}(\xi-\eta)|Z(\eta)|
		\int_\xi^\eta\left(
		|W-W_n|+|Z-Z_n|
		\right)(s)\,ds\,d\eta\\
		&\leq
		\|Z\|_{L^\infty}
		\left(|\xi|\tilde{\Gamma}(\xi)\right)
		\ast|q-q_n|
		+(\|W-W_n\|_{L^2}+\|Z-Z_n\|_{L^2})
		\left(|\xi|^{1/2}\tilde{\Gamma}(\xi)\right)
		\ast|Z|,
	\end{split}
\end{equation*}
where we have used 
the Cauchy-Schwarz inequality for 
$\int_\xi^\eta(\cdot)\,ds$.
Using again the Cauchy-Schwarz inequality, we obtain
\begin{equation*}
	\left\|I_{5,2}\right\|_{L^\infty}
	\leq\|Z\|_{L^\infty}
	\left(
	\|\xi\tilde{\Gamma}(\xi)\|_{L^2}
	\|q-q_n\|_{L^2}
	+\||\xi|^{1/2}\tilde{\Gamma}(\xi)\|_{L^1}
	(\|W-W_n\|_{L^2}+\|Z-Z_n\|_{L^2})
	\right),
\end{equation*}
while using \eqref{coest} with $p=2$, 
we arrive at
\begin{equation*}
	\left\|I_{5,2}\right\|_{L^2}
	\leq\|Z\|_{L^\infty}
	\|\xi\tilde{\Gamma}(\xi)\|_{L^1}
	\|q-q_n\|_{L^2}
	+\|Z\|_{L^2}
	\||\xi|^{1/2}\tilde{\Gamma}(\xi)\|_{L^1}
	(\|W-W_n\|_{L^2}+\|Z-Z_n\|_{L^2}).
\end{equation*}
By the uniform bounds \eqref{WZe} 
for $\|Z\|_{L^p}$, $p=2,\infty$,
we achieve the required estimate for $\|I_5\|_{L^2\cap L^\infty}$.

By estimating the integrals involved in 
$(P_j-P_{j,n})$, $(\px P_j-\px P_{j,n})$, $(S_j-S_{j,n})$ 
and $(\px S_j-\px S_{j,n})$ in a similar manner to how 
we estimated $I_5$ above, we can 
derive \eqref{PjPjn}, thereby establishing \eqref{Fd}. 
Moreover, we can also demonstrate that
\begin{equation}\label{Fd-}
	\frac{d}{dt}F_n(-t)\leq C_TF_n(-t),
	\quad t\in[0,T],
\end{equation}
with some $C_T>0$ and any $T>0$.

\medskip

\textbf{Step 5.} By applying Gronwall’s inequality to 
\eqref{Fd} and \eqref{Fd-}, and using 
\eqref{U0U0n} and \eqref{W0W0n}, along with 
the condition $q_0=q_{0,n}=1$, we can conclude that
\begin{equation*}
	\sup\limits_{t\in[-T,T]}F_n(t)\to 0,
	\quad n\to \infty,
\end{equation*}
for all $T>0$. In particular, this implies that
\begin{equation}\label{UVUnVn}
	\sup\limits_{t\in[-T,T]}
	\left(
	\|(U-U_n)(t,\cdot)\|_{L^\infty}
	+\|(V-V_n)(t,\cdot)\|_{L^\infty}
	\right)\to 0,\quad
	n\to\infty.
\end{equation}
From \eqref{char1}, \eqref{y0y0n}, 
\eqref{UVUnVn}, and \eqref{apest1}, we 
can deduce that
\begin{equation}\label{yyn}
	\sup\limits_{t\in[-T,T]}
	\|(y-y_n)(t,\cdot)\|_{L^\infty}
	\to 0,\quad n\to\infty,
\end{equation}
where $y_n$ is defined by \eqref{char1} with
$y_{0,n}$, $U_n$ and $V_n$ instead of 
$y_0$, $U$ and $V$, respectively.
Combining \eqref{uvdef}, \eqref{UVUnVn}, \eqref{yyn}, 
and \eqref{uH}, we obtain \eqref{dsm}.
\end{proof}

Combining Propositions \ref{Pgws}, \ref{wsol}, 
and \ref{Pdsm}, we obtain the following global 
existence theorem:

\begin{theorem}[Global conservative 
weak solution]\label{gws}
Assume that $(u_0,v_0)\in\Sigma$, where
$(\Sigma,d_\Sigma)$ is defined 
by \eqref{Sig} and \eqref{d^2}.
Then there exists a global conservative weak 
solution $(u,v)$ of the Cauchy 
problem \eqref{t-c-N-n}-\eqref{iid}, 
defined by \eqref{uvdef}, in the sense 
of Definitions \ref{defsc}. Moreover, this 
solution $(u,v)$ satisfies the continuity 
property of the data-to-solution map 
given in Proposition \ref{Pdsm}.
\end{theorem}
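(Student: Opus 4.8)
The plan is to treat Theorem \ref{gws} as an assembly of the structural results already established, verifying that the pair $(u,v)$ produced by \eqref{uvdef} satisfies every clause of Definitions \ref{defs} and \ref{defsc}. First I would invoke Theorem \ref{gwp} to obtain the unique global solution $(U,V,W,Z,q)\in C([-T,T],\Omega)$ of the ODE system \eqref{ODE}--\eqref{id} for arbitrary $T>0$, with $\Omega$ as in \eqref{Om}, and define the characteristic $y(t,\xi)$ by \eqref{char1}. Proposition \ref{propy} then guarantees that $y(t,\cdot)$ is a nondecreasing surjection of $\mathbb{R}$ onto itself and that $U,V$ are constant on the fibers where $y$ is constant; this is precisely what makes the definition \eqref{uvdef} of $(u,v)(t,x)$ unambiguous.

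Next I would verify that $(u,v)$ is a global weak solution in the sense of Definition \ref{defs}. Membership $(u,v)(t,\cdot)\in\Sigma$ for every fixed $t$, which is property (1) of that definition, follows from the a priori bounds \eqref{consin} (giving $u,v\in H^1$) together with \eqref{u_x^2n}, which, after the change of variables $x=y(t,\xi)$ and the identities \eqref{pxuv}, yields $\int (\px u)^2(\px v)^2\,dx\le 7E_{u_0}E_{v_0}-H_0<\infty$. Properties (2) and (3), namely the Lipschitz bound \eqref{uL} and the H\"older bound \eqref{uH}, are supplied by items (2) and (3) of Proposition \ref{Pgws}, while the two weak identities \eqref{uvw} are exactly the content of Proposition \ref{wsol}.

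It then remains to confirm the five conservative conditions of Definition \ref{defsc}. The $G_0$ conservation (item (1)) and the $H^1$-norm equalities off $N_W$, $N_Z$ together with the $H_0$ identity (item (4)) are provided by item (4) of Proposition \ref{Pgws}; the inequalities \eqref{consin} (item (3)) by item (1) of the same proposition. For item (2) I would introduce the Radon measures $\lambda^{(u)}_t,\lambda^{(v)}_t,\lambda^{(uv)}_t$ of \eqref{rm} and cite items (1)--(3) of Proposition \ref{mcl} for the decomposition into absolutely continuous and singular parts, the conservation laws, and the inequality \eqref{csho}; the conditional transport equations of item (5) are item (4) of Proposition \ref{mcl}. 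Finally, the uniform continuity of the data-to-solution map is Proposition \ref{Pdsm} verbatim.

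I expect the main obstacle to lie not in the assembly itself but in invoking consistently the singular-support statements in item (2)(a) and the conditional hypotheses $\mathrm{meas}(N_W)=\mathrm{meas}(N_Z)=0$ in item (5): the transport equations for $\lambda^{(u)}_t$, $\lambda^{(v)}_t$ and $\lambda^{(uv)}_t$ hold only under these measure-zero assumptions, since the boundary integrals over $D_W(t)$ and $D_Z(t)$ appearing in the proof of Proposition \ref{mcl} need not vanish otherwise. Because $\Sigma$ is not a linear space, one must also check that the change of variables and the product estimate survive at the level of general $\Sigma$-data rather than the $H^1\cap W^{1,4}$ data of \cite{HQ21}; this is exactly where the bound \eqref{U-L4} on $\pxi U_0$ is used to place $U_0,V_0$ back into $H^1\cap W^{1,4}$ and thereby run the whole ODE theory in the unchanged Banach space $E$.
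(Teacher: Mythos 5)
Your proposal is correct and follows essentially the same route as the paper, which proves Theorem \ref{gws} precisely by combining Theorem \ref{gwp}, Proposition \ref{propy}, Proposition \ref{Pgws}, Proposition \ref{wsol}, Proposition \ref{mcl}, and Proposition \ref{Pdsm}. Your added remarks on the conditional nature of the item (5) transport equations and on the role of \eqref{U-L4} in keeping the ODE system in the space $E$ for general $\Sigma$-data match the paper's own emphasis.
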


\section{A semigroup of global solutions}\label{sgs}

The global solutions of the two-component system 
constructed in Theorem \ref{gws} 
do not constitute a semigroup.
Indeed, in the particular case of the Novikov equation, 
the energy concentration of $(\px u)^4\,dx$ occurs, when 
two peakons collide \cite[Section 8]{CCCS18}. 
Therefore, in addition to a solution $(u,v)\in\Sigma$, 
we consider a positive Radon measure $\mu$ on $\mathbb{R}$ 
which satisfies \eqref{mu}. Then for the initial data 
$(u_0,v_0,\mu_0)\in\mathcal{D}$, 
see \eqref{D}, we define $y_0(\xi)$ as follows:
\begin{equation}\label{y0m}
	y_0(\xi)=
	\begin{cases}
		\sup\left\{
		y\in\mathbb{R}:
		y+\mu_0\left([0,y]\right)
		\leq\xi\right\},&\xi\geq 0,\\
		\inf\left\{
		y\in\mathbb{R}:
		|y|+\mu_0\left([-y,0)\right)
		\leq-\xi\right\},&\xi< 0.
	\end{cases}
\end{equation}
In the case $\mu=\mu^{ac}$, the above 
definition is equivalent to \eqref{y0}.
Moreover, taking into account that 
$$
\pxi y_0(\xi)\leq
\frac{1}{\left(1+(\px u_0)^2\right)
\left(1+(\px v_0)^2\right)(y_0(\xi))}
\quad\mbox{for a.e. }\xi\in\mathbb{R},
$$
we conclude that $U_0, V_0 \in H^1 \cap W^{1,4}$, 
see \eqref{U-L4}. Consequently, we can 
examine the Cauchy problem for the 
semilinear system represented by \eqref{ODE}-\eqref{id} 
within the space $E$ defined in \eqref{EB}, following 
the approach outlined in Section \ref{Gsl}. 
Define the map
\begin{equation}\label{Psit}
	\Psi_t(u_0,v_0,\mu_0)
	=\left(u(t),v(t),\mu_{(t)}\right),\quad t\in\mathbb{R},
\end{equation}
where $(u,v)(t)$ is given by \eqref{uvdef},
while $\mu_{(t)}$ is defined in terms 
of the global solution $(U,V,W,Z,q)$ of the 
ODE system \eqref{ODE}-\eqref{id} as follows:
\begin{equation}\label{mut}
	\mu_{(t)}([a,b])
	=\int\limits_{\{\xi:y(t,\xi)\in[a,b]\}}
	\left(
	\cos^2\frac{W}{2}\sin^2\frac{Z}{2}
	+\sin^2\frac{W}{2}\cos^2\frac{Z}{2}
	+\sin^2\frac{W}{2}\sin^2\frac{Z}{2}
	\right)(t,\xi)q(t,\xi)\,d\xi,
\end{equation}
with $y(t,\xi)$ defined in \eqref{char1}.

Observe that if $\partial_\xi y(t,\xi) \neq 0$ for 
a.e.~$\xi$, such that $y(t,\xi)$ lies within 
the interval $[a,b]$, it is possible to perform 
a change of variables in equation 
\eqref{mut} by setting $x = y(t,\xi)$:
\begin{equation*}
	\mu_{(t)}([a,b])
	=\int_{a}^{b}
	\left(
	(\px u)^2+(\px v)^2+(\px u)^2(\px v)^2
	\right)(t,x)\,dx,
\end{equation*}
and therefore
\begin{equation*}
	d\mu^{ac}_{(t)}
	=\left(
	(\px u)^2+(\px v)^2+(\px u)^2(\px v)^2
	\right)dx.
\end{equation*}

We are now ready to 
prove Theorem \ref{Thm}.

\begin{proof}[Proof of Theorem \ref{Thm}]
\textbf{Item (1)} follows from Theorem \ref{gws}.

\medskip

Let us prove \textbf{item (2)}. 
The fact that $(u_0,v_0)=(u(0),v(0))$ follows directly 
from \eqref{id} and \eqref{uvdef}. 
In order to show that $\mu_{(0)}=\mu_0$, we observe 
that \eqref{y0m} implies (see \cite[Page 236]{BC07})
\begin{equation}\label{mu0B}
	\mu_0([a,b])+\mathrm{meas}([a,b])
	=\mathrm{meas}\left\{
	\xi\in\mathbb{R}:y_0(\xi)\in[a,b]
	\right\}.
\end{equation}
Indeed, consider $0\leq a\leq b$, where 
$y_0(\xi)\in[a,b]$ for $\xi\geq0$ only.
Introducing the function $f(y)=y+\mu_0([0,y])$, 
we have from \eqref{y0m} that
$y_0(\xi)=\sup\{y\in\mathbb{R}
:f(y)\leq\xi\}$, $\xi\geq 0$.
Taking into account that $f(y)$ 
is strictly monotone, we have
$$
\{\xi\in\mathbb{R}:y_0(\xi)\in[a,b]\}
=[f(a-),f(b+)].
$$
Observing that 
$f(b+)-f(a-)=b-a+\mu_0([0,b])-\mu_0([0,a))$, 
we conclude \eqref{mu0B} for $0\leq a\leq b$.
The other values of $a$ and $b$ 
can be treated similarly.
		
By definition of the singular part of 
the Radon measure $\mu_0$ with respect 
to the Lebesgue measure (see, 
e.g., \cite[Section 1.6.2]{EG92}), 
there exists a Borel set 
$\mathcal{B}\subset\mathbb{R}$, such 
that $\mathrm{meas}(\mathcal{B})=
\mu_0^s(\mathbb{R}\setminus\mathcal{B})=0$. 
Taking into account that \eqref{mu0B} is 
valid for any Borel set, we have
\begin{equation*}
	\mu_0^s(\mathcal{B})
	=\mathrm{meas}\left\{
	\xi\in\mathbb{R}:y_0(\xi)\in\mathcal{B}
	\right\},\quad
	\mathrm{meas}(\mathcal{B})=0.
\end{equation*}
Therefore, recalling that $y_0(\xi)$ 
is monotone increasing, we obtain
\begin{equation}\label{mu0s}
	\begin{split}
		&\mu_0^{s}([a,b])=
		\mathrm{meas}\left\{
		\xi\in\mathbb{R}:y_0(\xi)\in[a,b]
		\mbox{ and }(\pxi y_0)(\xi)=0\right\}.
	\end{split}
\end{equation}
From equations \eqref{mut} and \eqref{mu0s}, it 
follows that (considering $\xi \in \mathbb{R}$ such 
that $y_0(\xi) \in [a,b]$ for all integrals, and noting 
that $\pxi y_0(\xi) \neq 0$ if and only if 
$\cos\frac{W(\xi)}{2} \neq 0$ 
and $\cos\frac{Z(\xi)}{2} \neq 0$):

\begin{equation*}
\begin{split}
	\mu_{(0)}([a,b])&=\left(
	\int_{\{\pxi y_0(\xi)\neq0\}}
	+\int_{\left\{\cos\frac{W_0(\xi)}{2}=0,\,
	\cos\frac{Z_0(\xi)}{2}\neq0\right\}}
	+\int_{\left\{\cos\frac{W_0(\xi)}{2}\neq0,\,
	\cos\frac{Z_0(\xi)}{2}=0\right\}}\right.\\
	&\left.
	\qquad+\int_{\left\{\cos\frac{W_0(\xi)}{2}=0,\,
	\cos\frac{Z_0(\xi)}{2}=0\right\}}
	\right)(\cdot)\,d\xi\\
	&=\mu_0^{ac}([a,b])+\mathrm{meas}\left\{
	\xi\in\mathbb{R}:y_0(\xi)\in[a,b]
	\mbox{ and }(\pxi y_0)(\xi)=0\right\}
	=\left(\mu_0^{ac}+\mu_0^{s}\right)([a,b]).
\end{split}
\end{equation*}
Thus, we have proved that 
$\Psi_0=\mathrm{id}$, see \eqref{Psit}.
	
Let us show that 
$\Psi_{t+\tau}=\Psi_{t}\circ\Psi_{\tau}$.
Using the notation 
$$
(\tilde{u}_0,\tilde{v}_0,\tilde{\mu}_0)
=\Psi_\tau(u_0,v_0,\mu_0),
$$
our goal is to prove that
$$
\Psi_{t+\tau}(u_0,v_0,\mu_0)
=\Psi_{t}(\tilde{u}_0,\tilde{v}_0,\tilde{\mu}_0).
$$
Recalling \eqref{pxiy} we deduce that 
(see \cite[equation (8.5)]{BC07d})
\begin{equation*}
	\begin{split}
		&y(\tau,\xi)
		+\int_{\xi_0}^{\xi}
		\left(
		\cos^2\frac{W}{2}\sin^2\frac{Z}{2}
		+\sin^2\frac{W}{2}\cos^2\frac{Z}{2}
		+\sin^2\frac{W}{2}\sin^2\frac{Z}{2}
		\right)(\tau,\eta)q(\tau,\eta)\,d\eta
		=\sigma(\xi),\\
		&\sigma(\xi)=\int_{\xi_0}^{\xi}q(\tau,\eta)\,d\eta,
	\end{split}
\end{equation*}
where we have taken the smallest $\xi_0=\xi_0(\tau)$ 
such that $y(\tau,\xi_0)=0$.
	
Define $\tilde{y}_0$ 
by the right-hand side of \eqref{y0m} 
with $\tilde\mu_0$ instead of $\mu_0$, where
$\tilde\mu_0$ is given by \eqref{mut} with $t=\tau$.
Let us prove that 
$\tilde{y}_0(\sigma(\xi))=y(\tau,\xi)$.
Consider $\xi>\xi_0$. Taking into account that
$$
\int_{\xi_0}^{\xi}
\left(\cos^2\frac{W}{2}\sin^2\frac{Z}{2}
+\sin^2\frac{W}{2}\cos^2\frac{Z}{2}
+\sin^2\frac{W}{2}\sin^2\frac{Z}{2}
\right)(\tau,\eta)q(\tau,\eta)\,d\eta
\leq\tilde{\mu}_0([0,y(\tau,\xi)]),
$$
we conclude that 
$\tilde{y}_0(\sigma(\xi))\leq y(\tau,\xi)$. On the other hand, 
since $\xi_0$ is the smallest real number such 
that $y(\tau,\xi_0)=0$, we have 
for any small $\ve>0$
$$
\int_{\xi_0}^{\xi}
\left(\cos^2\frac{W}{2}\sin^2\frac{Z}{2}
+\sin^2\frac{W}{2}\cos^2\frac{Z}{2}
+\sin^2\frac{W}{2}\sin^2\frac{Z}{2}
\right)(\tau,\eta)q(\tau,\eta)\,d\eta
\geq\tilde{\mu}_0([0,y(\tau,\xi)-\ve]),
$$
which implies that
$\tilde{y}_0(\sigma)\geq y(\tau,\xi)-\ve$. 
Employing a similar line of reasoning for $\xi < 0$, we 
deduce that $\tilde{y}_0(\sigma(\xi)) 
= y(\tau, \xi)$ for all 
$\xi \in \mathbb{R}$.
	
Using that $\tilde{y}_0(\sigma)=y(\tau,\xi)$, 
we obtain
$$
\tilde{U}_0(\sigma)
:=\tilde{u}_0(\tilde{y}_0(\sigma))
=U(\tau,[y(\tau)]^{-1}(\tilde{y}_0(\sigma)))
=U(\tau,\xi).
$$
Arguing similarly, we arrive 
at the following expressions:
$$
\tilde{V}_0(\sigma)=V(\tau,\xi),\quad
\tilde{W}_0(\sigma)=W(\tau,\xi),\quad
\tilde{Z}_0(\sigma)=(\tau,\xi),
$$
where $\tilde{V}_0,\tilde{W}_0,\tilde{Z}_0$ are 
defined in terms of $\tilde{u}$ 
and $\tilde{y}_0$ by \eqref{id}. 
Finally, observing that the differential equation 
for $\pt q$ in \eqref{ODE} is linear with respect 
to $q$ and taking into account the 
definitions of $P_j$, $S_j$, $\px P_j$, 
and $\px S_j$, $j=1,2$, see Proposition \ref{Pj}, 
we conclude that the flows
$$
\left(\tilde{U}(t,\sigma),\tilde{V}(t,\sigma),
\tilde{W}(t,\sigma),\tilde{Z}(t,\sigma),
\tilde{q}(t,\sigma)q(\tau,\xi)\right)
\quad
\mbox{and}
\quad
(U,V,W,Z,q)(t+\tau,\xi),
$$
satisfy the same differential equation \eqref{ODE} 
with the same initial data at $t=0$.
In view of the uniqueness of the solution of 
the ODE system, these flows are identical 
and thus $\Psi_{t+\tau}=\Psi_{t}\circ\Psi_{\tau}$.

\medskip
	
Let us prove \textbf{item (3)}. We will now demonstrate 
that $y_{0,n} \to y_0$ in $L^\infty(\mathbb{R})$, 
comparing with Step 1 of Proposition \ref{Pdsm}. 
Here, $y_{0,n}$ is defined 
similarly to $y_0$, but with $\mu_{0,n}$ 
replacing $\mu_0$. 
Consider any fixed $\xi \geq 0$.
Then, by Definition \ref{y0m} 
of $y_0(\xi)$ with $\xi\geq 0$, 
for any $\ve>0$ there exists 
$\tilde{y}\in\mathbb{R}$ such 
that $|y_0(\xi)-\tilde{y}|<\ve$ and
\begin{equation}\label{in1}
	\xi-\ve\leq\tilde{y}
	+\mu_0([0,\tilde{y}])\leq\xi.
\end{equation}
Using that $\mu_{0,n}([0,\tilde{y}])
\to\mu_{0}([0,\tilde{y}])$ 
as $n\to\infty$ (see, e.g., \cite[Section 1.9, 
Theorem 1]{EG92}), there 
exists $N\in\mathbb{N}$ such 
that for all $n\geq N$ the 
following inequalities hold:
\begin{equation}\label{in2}
	-\ve\leq\mu_{0,n}([0,\tilde{y}])
	-\mu_0([0,\tilde{y}])\leq\ve.
\end{equation}
Combing \eqref{in1} and \eqref{in2} we obtain
$\xi-2\ve\leq\tilde{y}+\mu_{0,n}([0,\tilde{y}])\leq\xi+\ve$, 
which implies that $|y_{0,n}(\xi)-\tilde{y}|\leq2\ve$ 
for all $n\geq N$. 
Recalling that $|y_0(\xi) - \tilde{y}| < \ve$, 
we deduce that $|y_{0,n}(\xi) - y_0(\xi)| \leq 3\varepsilon$. 
Using a similar argument for $\xi < 0$, we conclude 
that $y_{0,n} \to y_0$ pointwise for 
all $\xi \in \mathbb{R}$.
	
Given that (i) $y_{0,n}(\xi)$ is continuous 
and monotonically increasing for all $n$, and (ii) $y_0(\xi)$ 
is continuous, pointwise convergence implies 
uniform convergence on every compact interval $[-R,R]$, 
where $R>0$. Finally, using that 
$\mu_0(\mathbb{R})<\infty$, we arrive at
$$
\|y_{0,n}-y_0\|_{L^\infty(\mathbb{R})}\to 0,
\quad\mbox{as }n\to\infty,
$$
which is nothing but 
\eqref{y0y0n}, see the proof of Proposition \ref{Pdsm} (Step 1).

Based on Steps 2 and 3 of the proof of 
Proposition \ref{Pdsm}, the 
inequalities \eqref{U0U0n} and \eqref{W0W0n} are derived 
from \eqref{y0y0n}, along with the convergences 
$u_{0,n} \to u_0$, $v_{0,n} \to v_0$ in $H^1$, 
and $(\px u_{0,n}) \px v_{0,n} \to (\px u_0) \px v_0$ in $L^2$. 
These convergences correspond to condition a) of 
item (3) in Theorem \ref{Thm}, thereby 
establishing that
\begin{equation*}
	\left\|
	U_{0,n}-U_0
	\right\|_{L^\infty}
	\to 0,\,\,
	\left\|
	V_{0,n}-V_0
	\right\|_{L^\infty}
	\to 0,\quad n\to\infty,
\end{equation*}
and
\begin{equation*}
	\|W_{0,n}-W_0\|_{L^2},\,\,
	\|Z_{0,n}-Z_0\|_{L^2}\to 0,
	\quad n\to\infty.
\end{equation*}
By applying the same reasoning as in Step 4 of 
the proof of Proposition \ref{Pdsm}, we derive 
the inequalities \eqref{Fd} and \eqref{Fd-}. These inequalities 
lead directly to \eqref{unifc1}, as in Step 5 
of the proof of Proposition \ref{Pdsm}.

\medskip
	
\textbf{Item (4)} follows from \eqref{u_x^2n} and \eqref{cn}.

\medskip
	
Finally, we establish \textbf{item (5)}. For any $\Phi(t,\xi) = \phi(t,x)$ 
where $x = y(t,\xi)$, it follows that (the arguments $t,\xi$ 
are omitted here; see the proof of item (4) 
in Proposition \ref{mcl})
\begin{equation*}
	\begin{split}
		I_6&:=
		\int_{-T}^T\int_{-\infty}^{\infty}
		\left(
		\cos^2\frac{W}{2}\sin^2\frac{Z}{2}
		+\sin^2\frac{W}{2}\cos^2\frac{Z}{2}
		+\sin^2\frac{W}{2}\sin^2\frac{Z}{2}
		\right)q
		\pt\Phi\,d\xi\,dt\\
		&=
		\int_{-T}^T\int_{-\infty}^{\infty}
		(\pt\phi+uv\px\phi)(t,x)\,d\mu_{(t)}\,dt,
	\end{split}
\end{equation*}
where $x=y(t,\xi)$. Conversely, by integrating 
by parts and utilizing \eqref{cn-t2}, \eqref{cn-t3}, 
and the identities
\begin{equation*}
	\pt\left(
	q\cos^2\frac{W}{2}\sin^2\frac{Z}{2}
	\right)=
	q\cos^2\frac{W}{2}\sin Z\left(
	UV^2-S_1-\px S_2
	\right)
	+\frac{q}{2}V\sin W\sin^2\frac{Z}{2},
\end{equation*}
we obtain the following expression 
for $I_6$ (here and below we drop the 
argument $t$ in $D_W$ and $D_Z$):
\begin{equation}\label{chI}
	\begin{split}
		I_6&=
		-\int_{-T}^T
		\int_{\mathbb{R}\setminus\{D_W\cup D_Z\}}
		\tilde{A}_2\Phi\,d\xi\,dt
		-\int_{-T}^T\int_{D_Z}
		\left(
		q\sin W\left(\frac{V}{2}
		-P_1-\px P_2\right)	
		\right)\Phi\,d\xi\,dt\\
		&\quad\,\,
		-\int_{-T}^T\int_{D_W}
		\left(
		q\sin Z\left(\frac{U}{2}
		-S_1-\px S_2\right)	
		\right)\Phi\,d\xi\,dt,
	\end{split}
\end{equation}
with
\begin{equation*}
	\begin{split}
		\tilde{A}_2=&\,qUV(U\sin W+V\sin Z)
		+q\sin W\left(\frac{V}{2}\sin^2\frac{Z}{2}
		-P_1-\px P_2\right)\\
		&+q\sin Z\left(\frac{U}{2}\sin^2\frac{W}{2}
		-S_1-\px S_2\right).
	\end{split}
\end{equation*}
Given that $\mathrm{meas}(N_W)+\mathrm{meas}(N_Z)=0$, 
by substituting $x=y(t,\xi)$ into \eqref{chI} and 
recalling \eqref{pxi-y}, we conclude that $\mu_{(t)}$ 
serves as a measure-valued solution 
to the desired transport equation with 
source term \eqref{mvm1} (see also Remark \ref{wms}).
\end{proof}	

\appendix
\numberwithin{equation}{subsection}
\numberwithin{lemma}{subsection}
\numberwithin{proposition}{subsection}

\section*{Appendix}
\renewcommand{\thesubsection}{\Alph{subsection}}

In this appendix, we provide 
detailed proofs of specific 
propositions and technical lemmas utilized elsewhere 
in the article. Although many of these proofs are available 
in the existing literature, certain elements of the analysis are 
applied in establishing the new results presented in this paper.

\subsection{Appendix A}\label{AS4}

We will present a 
summary of the proofs of  Lemma \ref{L1} 
and Propositions \ref{POm}, \ref{LipP}.

\begin{proof}[Proof of Lemma \ref{L1}]
For any function $W(t,\cdot)$ that 
satisfies the conditions in \eqref{Om}, 
we have the following:
\begin{equation}\label{mW}
	\begin{split}
		\mathrm{meas}\left\{
		\xi\in\mathbb{R}:\frac{\pi}{4}\leq
		\left|\frac{W(\xi)}{2}\right|\leq\frac{3\pi}{4}
		\right\}&\leq
		\mathrm{meas}\left\{
		\xi\in\mathbb{R}:
		\left|\sin\frac{W(\xi)}{2}\right|\geq\frac{1}{\sqrt{2}}
		\right\}\\
		&=\int_{\left\{
		\xi\in\mathbb{R}\,:\,
		\sin^2\frac{W(\xi)}{2}\geq\frac{1}{2}
		\right\}}\,ds\\
		&\leq 2\int_{\left\{
		\xi\in\mathbb{R}\,:\,
		\sin^2\frac{W(\xi)}{2}\geq\frac{1}{2}
		\right\}}
		\sin^2\frac{W(s)}{2}\,ds\\
		&\leq \frac{1}{2}\int_{-\infty}^{\infty}W^2(s)\,ds
		\leq \frac{R_2^2}{2}.
	\end{split}
\end{equation}
Arguing similarly for $Z$, we conclude that
\begin{equation}\label{mZ}
	\mathrm{meas}\left\{
	\xi\in\mathbb{R}:\frac{\pi}{4}\leq
	\left|\frac{Z(\xi)}{2}\right|\leq\frac{3\pi}{4}
	\right\}\leq \frac{R_2^2}{2}.
\end{equation}
Introducing the sets
\begin{equation}\label{D1}
	D_1=\left\{s\in[\eta,\xi]:
	\left|\frac{W(s)}{2}\right|\leq\frac{\pi}{4}\right\},\quad
	D_2=\left\{s\in[\eta,\xi]:
	\left|\frac{Z(s)}{2}\right|\leq\frac{\pi}{4}\right\},\quad
	\eta\leq\xi,
\end{equation}
and combining \eqref{mW} and \eqref{mZ}, 
we obtain the following estimate (see \eqref{Om}):
\begin{equation}\label{Ep}
	\begin{split}
		\int_\eta^\xi q(s)\cos^2\frac{W(s)}{2}
		\cos^2\frac{Z(s)}{2}\,ds
		&\geq \int_{D_1\cap D_2} q(s)\cos^2\frac{W(s)}{2}
		\cos^2\frac{Z(s)}{2}\,ds
		\geq \frac{1}{4}\int_{D_1\cap D_2}q(s)\,ds\\
		&\geq\frac{q^-}{4}\int_{D_1\cap D_2}\,ds
		\geq\frac{q^-}{4}\left(
		\int_\eta^\xi\,ds-\int_{[\eta,\xi]\setminus D_1}\,ds
		-\int_{[\eta,\xi]\setminus D_2}\,ds\right)\\
		&\geq\frac{q^-}{4}\left(\xi-\eta-R_2^2\right),
	\end{split}
\end{equation}
where $\eta\leq\xi$.
Thus for all $\xi,\eta\in \mathbb{R}$ we have
\begin{equation*}
	\left|\int_\eta^\xi q(s)\cos^2\frac{W(s)}{2}
	\cos^2\frac{Z(s)}{2}\,ds\right|
	\geq \frac{q^-}{4}\left(|\xi-\eta|-R_2^2\right),
\end{equation*}
which implies \eqref{E-est}.
\end{proof}

\begin{proof}[Proof of Proposition \ref{POm}]
We present a detailed proof demonstrating 
that $\px P_2 \in H^1 \cap W^{1,4}$. The analysis 
for the other nonlocal terms follows similarly. 
For clarity and simplicity, we omit 
the argument $t$ in all computations.

Considering that $\sin^2\frac{W(\eta)}{2}\leq 1$, 
$|Z(\eta)|\leq |\sin Z(\eta)|$, and
using \eqref{E-est}, we can derive 
from \eqref{pxP_2} that
\begin{equation*}
	|\px P_2(\xi)|\leq\frac{q^+}{8}
	\int_{-\infty}^{\infty}
	\Gamma(\xi-\eta)|Z(\eta)|\,d\eta,
\end{equation*}
which implies that (see \eqref{WZ-L4} and \eqref{G-e})
\begin{equation*}
	\begin{split}
		\|\px P_2\|_{L^2\cap L^4}\leq
		\frac{q^+}{8}\|\Gamma\|_{L^1}\|Z\|_{L^2\cap L^4}
		\leq \frac{q^+}{q^-}
		\exp\left(\frac{q^-}{4}R_2^2\right)
		\left(R_2+\left(\frac{3\pi}{2}R_2\right)^{1/2}\right).
	\end{split}
\end{equation*}
Differentiating \eqref{pxP_2} with respect to $\xi$ we arrive at
\begin{equation}\label{xipxP_2}
	\begin{split}
		\pxi(\px P_2)=
		-\frac{q(\xi)}{4}\sin^2\frac{W(\xi)}{2}\sin Z(\xi)
		+\frac{1}{8}\int_{-\infty}^{\infty}
		&E(t,\xi,\eta)q^2(\eta)\cos^2\frac{W(\eta)}{2}
		\cos^2\frac{Z(\eta)}{2}\\
		&\times\sin^2\frac{W(\eta)}{2}\sin Z(\eta)\,d\eta.
	\end{split}
\end{equation}
Using that $\sin^2\frac{W(\xi)}{2}\leq 1$, 
$\cos^2\frac{W(\eta)}{2}\leq 1$
and $|\sin Z(\xi)|\leq|Z(\xi)|$, we deduce 
from \eqref{xipxP_2} and \eqref{E-est} that
\begin{equation*}
	|\pxi(\px P_2(\xi))|\leq\frac{q^+}{4}|Z(\xi)|+
	\frac{(q^+)^2}{8}
	\int_{-\infty}^{\infty}
	\Gamma(\xi-\eta)|Z(\eta)|\,d\eta,
\end{equation*}
and therefore (see \eqref{WZ-L4} and \eqref{G-e})
\begin{equation*}
	\begin{split}
		\|\pxi(\px P_2)\|_{L^2\cap L^4}&\leq
		\frac{q^+}{4}
		\|Z\|_{L^2\cap L^4}
		+\frac{(q^+)^2}{8}\|\Gamma\|_{L^1}\|Z\|_{L^2\cap L^4}\\
		&\leq q^+
		\left(R_2+\left(\frac{3\pi}{2}R_2\right)^{1/2}\right)
		\left(\frac{1}{4}
		+\frac{q^+}{q^-}
		\exp\left(\frac{q^-}{4}R_2^2\right)
		\right).
	\end{split}
\end{equation*}
\end{proof}

\begin{proof}[Proof of Proposition \ref{LipP}]
We prove \eqref{Pj-L} for 
$\|\px P_{j1}-\px P_{j2}\|_{H^1\cap W^{1,4}}$, the 
other norms in \eqref{Nj} can be estimated in a similar manner.
Introducing the notations (compare with \eqref{E})
\begin{equation*}
	\mathcal{E}_k(t,\xi,\eta)=
	\exp\left(-\left|\int_\eta^\xi
	\left(q_k\cos^2\frac{W_k}{2}
	\cos^2\frac{Z_k}{2}\right)(t,s)\,ds\right|
	\right),
	\quad k=1,2,
\end{equation*}
we obtain (see \eqref{pxP_2})
\begin{equation}\label{pxP2-L}
	|\px P_{2,1}-\px P_{2,2}|\leq \frac{1}{8}(I_1+I_2),
\end{equation}
with
\begin{equation}\label{I1I2}
	\begin{split}
		&I_1=\int_{\xi}^{\infty}
		\left|
		\mathcal{E}_1(\xi,\eta)
		\left(q_1\sin^2\frac{W_1}{2}\sin Z_1\right)(\eta)
		-\mathcal{E}_2(\xi,\eta)
		\left(q_2\sin^2\frac{W_2}{2}\sin Z_2
		\right)(\eta)\right|\,d\eta,\\
		&I_2=\int_{-\infty}^{\xi}
		\left|
		\mathcal{E}_1(\xi,\eta)
		\left(q_1\sin^2\frac{W_1}{2}\sin Z_1\right)(\eta)
		-\mathcal{E}_2(\xi,\eta)
		\left(q_2\sin^2\frac{W_2}{2}\sin Z_2
		\right)(\eta)\right|\,d\eta.
	\end{split}
\end{equation}
Let us estimate $I_1$. Using that 
$\|q_1\|_{L^\infty}\leq q^+$, 
$\sin^2\frac{W_1}{2}\leq 1$, $|\sin Z_1|\leq|Z_1|$ 
as well as \eqref{E-est} for $\mathcal{E}_2$, we obtain
\begin{equation}\label{I1e}
	\begin{split}
		I_1\leq &
		\int_{\xi}^{\infty}
		\Gamma(\xi-\eta)
		\left|
		\left(q_1\sin^2\frac{W_1}{2}\sin Z_1\right)(\eta)
		-\left(q_2\sin^2\frac{W_2}{2}\sin Z_2\right)(\eta)
		\right|\,d\eta\\
		&
		+q^+\int_{\xi}^{\infty}
		\left|
		\mathcal{E}_1(\xi,\eta)-\mathcal{E}_2(\xi,\eta)
		\right|
		|Z_1(\eta)|\,d\eta
		=:I_{1,1}+q^+I_{1,2}.
	\end{split}
\end{equation}
We have the following estimate for $I_{1,1}$:
\begin{equation}\label{I11-e0}
	\begin{split}
	I_{1,1}&\leq\|q_1-q_2\|_{L^\infty}
	\left(\Gamma\ast|Z_1|\right)(\xi)
	+q^+\left(
	\Gamma\ast
	\left|\sin^2\frac{W_1}{2}-\sin^2\frac{W_2}{2}\right|
	+\Gamma\ast|\sin Z_1-\sin Z_2|
	\right)\\
	&\leq\|q_1-q_2\|_{L^\infty}
	\left(\Gamma\ast|Z_1|\right)(\xi)
	+q^+\left(
	\Gamma\ast
	\left|W_1-W_2\right|
	+\Gamma\ast|Z_1-Z_2|
	\right).
	\end{split}
\end{equation}
Using that
\begin{equation}\label{coest}
	\|f\ast g\|_{L^p(\mathbb{R})}
	\leq\|f\|_{L^1(\mathbb{R})}
	\|g\|_{L^p(\mathbb{R})},
	\quad 1\leq p\leq\infty,
\end{equation}
and \eqref{G-e}, we obtain
\begin{subequations}\label{I11-e1}
	\begin{equation}
	\begin{split}
		\|I_{1,1}\|_{L^2}&\leq
		\|\Gamma\|_{L^1}
		\left(
		q^+\|W_1-W_2\|_{L^2}
		+q^+\|Z_1-Z_2\|_{L^2}
		+\|Z_1\|_{L^2}\|q_1-q_2\|_{L^\infty}
		\right)\\
		&\leq
		\frac{8}{q^-}
		\exp\left(\frac{q^-}{4}R_2^2\right)
		\left(
		q^+\|W_1-W_2\|_{L^2}
		+q^+\|Z_1-Z_2\|_{L^2}
		+R_2\|q_1-q_2\|_{L^\infty}
		\right),
	\end{split}
	\end{equation}
	and (see \eqref{WZ-L4} and \eqref{G-e}) 
	\begin{equation}
		\begin{split}
		\|I_{1,1}\|_{L^4}&\leq
		\|\Gamma\|_{L^1}
		\left(
		q^+\|W_1-W_2\|_{L^4}
		+q^+\|Z_1-Z_2\|_{L^4}
		+\|Z_1\|_{L^4}\|q_1-q_2\|_{L^\infty}
		\right)\\
		&\leq
		\frac{8}{q^-}
		\exp\left(\frac{q^-}{4}R_2^2\right)
		\left(
		\frac{q^+}{2}
		\|W_1-W_2\|_{L^2\cap L^\infty}
		+\frac{q^+}{2}\|Z_1-Z_2\|_{L^2\cap L^\infty}\right.
		\\ & \left.
		\qquad\qquad\qquad\qquad\quad\,\,
		+\left(\frac{3\pi}{2}R_2\right)^{1/2}
		\|q_1-q_2\|_{L^\infty}
		\right).
	\end{split}
\end{equation}
\end{subequations}
Now, consider $I_{1,2}$. Noting that 
$|e^{a}-e^b|\leq\max{e^a,e^b}|a-b|$ 
and applying \eqref{G-e}, we conclude that
\begin{equation}\label{I12-e}
	\begin{split}
		I_{1,2}&\leq\int_{\xi}^\infty
		\Gamma(\xi-\eta)||Z_1(\eta)|\\
		&\qquad\quad\times\left|
		\int_\eta^\xi
		\left(q_1\cos^2\frac{W_1}{2}
		\cos^2\frac{Z_1}{2}\right)(s)\,ds
		-\int_\eta^\xi
		\left(q_2\cos^2\frac{W_2}{2}
		\cos^2\frac{Z_2}{2}\right)(s)\,ds
		\right|\,d\eta\\
		&\leq
		\|q_1-q_2\|_{L^\infty}
		\int_{-\infty}^{\infty}\Gamma(\xi-\eta)
		|\xi-\eta|
		|Z_1(\eta)|\,d\eta\\
		&\quad+q^+\left(
		\|W_1-W_2\|_{L^\infty}
		+|Z_1-Z_2|_{L^\infty}\right)
		\left(
		\int_{-\infty}^{\infty}\Gamma(\xi-\eta)
		|\xi-\eta||Z_1(\eta)|\,d\eta
		\right).
	\end{split}
\end{equation}
From \eqref{mG-e} and \eqref{I12-e},
\begin{subequations}\label{I12-e1}
\begin{equation}
	\begin{split}
		\|I_{1,2}\|_{L^2}&\leq
		\|\xi\Gamma(\xi)\|_{L^1}\|Z_1\|_{L^2}
		\left(
		\|q_1-q_2\|_{L^\infty}
		+q^+\|W_1-W_2\|_{L^\infty}
		+q^+\|Z_1-Z_2\|_{L^\infty}
		\right)\\
		&\leq
		\frac{32R_2}{(q^-)^2}
		\exp\left(\frac{q^-}{4}R_2^2\right)
		\left(
		q^+\|W_1-W_2\|_{L^\infty}
		+q^+\|Z_1-Z_2\|_{L^\infty}
		+\|q_1-q_2\|_{L^\infty}
		\right),
	\end{split}
\end{equation}
as well as (see \eqref{WZ-L4}) 
\begin{equation}
	\begin{split}
		\|I_{1,2}\|_{L^4}&\leq
		\|\xi\Gamma(\xi)\|_{L^1}\|Z_1\|_{L^4}
		\left(
		\|q_1-q_2\|_{L^\infty}
		+q^+\|W_1-W_2\|_{L^\infty}
		+q^+\|Z_1-Z_2\|_{L^\infty}
		\right)\\
		&\leq
		\left(\frac{3\pi}{2}R_2\right)^{1/2}
		\frac{32}{(q^-)^2}
		\exp\left(\frac{q^-}{4}R_2^2\right)
		\left(
		q^+
		\|W_1-W_2\|_{L^\infty}
		+q^+\|Z_1-Z_2\|_{L^\infty}\right.\\
		&\left.
		\qquad\qquad\qquad\qquad\qquad
		\qquad\qquad\quad\,\,\,
		+\|q_1-q_2\|_{L^\infty}
		\right).
	\end{split}
\end{equation}
\end{subequations}
By combining \eqref{pxP2-L}, \eqref{I11-e1}, and \eqref{I12-e1}, 
we establish \eqref{Pj-L} for 
$\|\px P_{2,1}-\px P_{2,2}\|_{L^2 \cap L^4}$. 
Applying the expression \eqref{xipxP_2} 
and utilizing the same reasoning, we derive 
\eqref{Pj-L} for $\|\pxi(\px P{2,1}) 
-\pxi(\px P_{2,2})\|_{L^2 \cap L^4}$.
\end{proof}

\subsection{Appendix B}\label{AS6}

\begin{proof}[Proof of Proposition \ref{Pgws}]
\textbf{Proof of item (1).} Utilizing \eqref{pxuv} 
and \eqref{pxi-y}, we obtain (see \eqref{cn} and \eqref{DN})
\begin{equation}\label{estH1}
	\begin{split}
		E_{u_0}&=\int_{-\infty}^{\infty}
		\left(U^2\cos^2\frac{W}{2}+
		\sin^2\frac{W}{2}\right)(t,\xi)
		\left(q\cos^2\frac{Z}{2}\right)(t,\xi)\,d\xi\\
		&\geq
		\int_{\mathbb{R}\setminus D_W(t)}
		\left(U^2\cos^2\frac{W}{2}+
		\sin^2\frac{W}{2}\right)(t,\xi)
		\left(q\cos^2\frac{Z}{2}\right)(t,\xi)\,d\xi\\
		&=\int_{-\infty}^{\infty}\left(u^2(t,x)
		+\tan^2\frac{W(t,[y(t)]^{-1}(x))}{2}\right)\,dx
		=\|u(t,\cdot)\|_{H^1}^2,
	\end{split}
\end{equation}
for all $t\in\mathbb{R}$.
Applying similar arguments for the second conservation 
law in \eqref{cn}, we conclude that 
$\|v(t,\cdot)\|_{H^1}^2\leq E_{v_0}$ 
and \eqref{consin} is proved.
\medskip

\textbf{Proof of item (2).} Define $\xi_1$ and $\xi_2$ 
for any $x\in\mathbb{R}$ as follows:
\begin{equation*}
	x=y(t_1,\xi_1),\quad x=y(t_2,\xi_2).
\end{equation*}
Then we have
\begin{equation}\label{ud1}
	\begin{split}
		|u(t_1,x)-u(t_2,x)|&\leq
		\left|U(t_1,\xi_1)-U(t_1,\xi_2)\right|
		+\left|U(t_1,\xi_2)-U(t_2,\xi_2)\right|\\
		&=
		\left|u(t_1,y(t_2,\xi_2))-u(t_1,y(t_1,\xi_2))\right|
		+\left|U(t_1,\xi_2)-U(t_2,\xi_2)\right|\\
	\end{split}
\end{equation}
Taking into account that
(see \eqref{char1} and \eqref{apest1})
$$
|y(t_2,\xi_2)-y(t_1,\xi_2)|\leq
(E_{u_0}E_{v_0})^{1/2}|t_1-t_2|,
$$
we obtain
\begin{equation}\label{ud2}
	\begin{split}
		\left|u(t_1,y(t_2,\xi_2))-u(t_1,y(t_1,\xi_2))\right|&\leq
		\sup\limits_{|x-z|\leq(E_{u_0}E_{v_0})^{1/2}
		|t_1-t_2|}|u(t_1,x)-u(t_1,z)|\\
		& \leq \int_{x-(E_{u_0}E_{v_0})^{1/2}|t_1-t_2|}
		^{x+(E_{u_0}E_{v_0})^{1/2}|t_1-t_2|}
		|\px u(t_1,z)|\,dz.
	\end{split}
\end{equation}
Using the first equation in \eqref{ODE}, we conclude
\begin{equation}\label{Ud1}
	|U(t_1,\xi_2)-U(t_2,\xi_2)|
	\leq\left|\int_{t_1}^{t_2}
	|\px P_1+P_2|(\tau,\xi_2)\,d\tau\right|.
\end{equation}
By merging \eqref{ud1}, \eqref{ud2}, \eqref{Ud1}, 
applying \eqref{pxi-y}, and utilizing 
the Cauchy-Schwarz inequality, we reach
\begin{equation*}
	\begin{split}
		&\|u(t_1,\cdot)-u(t_2,\cdot)\|_{L^2}^2
		\leq 2\int_{-\infty}^{\infty}
		\left(
		\int_{x-(E_{u_0}E_{v_0})^{1/2}|t_1-t_2|}
		^{x+(E_{u_0}E_{v_0})^{1/2}|t_1-t_2|}
		1\cdot|\px u(t_1,z)|\,dz
		\right)^2\,dx\\
		&\qquad+2
		\int_{-\infty}^{\infty}
		\left(
		\int_{t_1}^{t_2}
		1\cdot|\px P_1+P_2|(\tau,\xi)\,d\tau
		\right)^2\left(
		q\cos^2\frac{W}{2}\cos^2\frac{Z}{2}\right)(t_1,\xi)\,d\xi\\
		&\quad\leq 4(E_{u_0}E_{v_0})^{1/2}|t_1-t_2|
		\int_{-\infty}^{\infty}
		\int_{x-(E_{u_0}E_{v_0})^{1/2}|t_1-t_2|}
		^{x+(E_{u_0}E_{v_0})^{1/2}|t_1-t_2|}
		|\px u(t_1,z)|^2\,dz\,dx\\
		&\qquad
		+4\|q(t_1,\cdot)\|_{L^\infty}|t_1-t_2|
		\int_{-\infty}^{\infty}
		\left|
		\int_{t_1}^{t_2}
		\left(\left(\px P_1\right)^2
		+\left(P_2\right)^2\right)(\tau,\xi)\,d\tau\right|
		\,d\xi.
	\end{split}
\end{equation*}
Changing the order of integration and 
using \eqref{qap}, we deduce that
\begin{equation}\label{ud3}
	\begin{split}
		\|u(t_1,\cdot)-u(t_2,\cdot)\|_{L^2}^2
		&\leq 8E_{u_0}E_{v_0}|t_1-t_2|^2\|\px u(t_1,\cdot)\|_{L^2}^2\\
		&\quad+4\kappa_0|t_1-t_2|
		\left|\int_{t_1}^{t_2}\|\px P_1(\tau,\cdot)\|_{L^2}^2
		+\|P_2(\tau,\cdot)\|_{L^2}^2\,d\tau
		\right|.
	\end{split}
\end{equation}
Applying the uniform estimates \eqref{PjSj}
and item (6) of this proposition, 
estimate \eqref{ud3} implies \eqref{uL} for
$\|u(t_1,\cdot)-u(t_2,\cdot)\|_{L^2}$.
Arguing similarly for $\|v(t_1,\cdot)-v(t_2,\cdot)\|_{L^2}$, 
we obtain \eqref{uL}.

\medskip

\textbf{Proof of item (3)}.
Define $\xi_1$ and $\hat{\xi}_1$ for 
the given $x_1$, $t_1$ and $t_2$
as follows:
\begin{equation*}
	x_1=y(t_1,\xi_1),\quad
	x_1=y(t_2,\hat\xi_1),
\end{equation*}
and define $\hat x_1$ for the given $\xi_1$ and $t_2$ (see \eqref{uH}):
\begin{equation*}
	\hat x_1=y(t_2,\xi_1).
\end{equation*}
By the Sobolev inequality the function $u(t,\cdot)\in H^{1}$ 
is H\"older continuous with the exponent $1/2$ 
(see, e.g., \cite[Theorem 9.12]{Br11}).
Therefore,
\begin{equation}\label{udiff1}
	\begin{split}
		|u(t_1,x_1)-u(t_2,x_2)|&\leq
		|u(t_1,x_1)-u(t_2,x_1)|
		+|u(t_2,x_1)-u(t_2,x_2)|\\
		&\leq
		\left|U(t_1,\xi_1)-U(t_2,\hat\xi_1)\right|
		+C|x_1-x_2|^{1/2},
	\end{split}
\end{equation}
for some $C>0$. Taking into account the 
first equation in \eqref{ODE} and the a priori 
estimates \eqref{PjSj} with $p=\infty$
we derive the estimate
\begin{equation}\label{udiff2}
	\begin{split}
		\left|U(t_1,\xi_1)-U(t_2,\hat\xi_1)\right|
		&\leq\left\|\pt U(t^*,\cdot)\right\|_{L^\infty}|t_1-t_2|
		+\left|U(t_2,\xi_1)-U(t_2,\hat\xi_1)\right|\\
		&\leq C_\infty|t_1-t_2|
		+\left|u(t_2,\hat x_1)-u(t_2,x_1)\right|\\
		&\leq C_\infty|t_1-t_2|
		+C|\hat x_1-x_1|^{1/2},
	\end{split}
\end{equation}
where $t^*$ is between $t_1$ and $t_2$ and
$C_\infty=C_\infty(E_{u_0}, E_{v_0}, H_0)$.
Observing that (see \eqref{char1} and \eqref{apest1})
\begin{equation*}
	|\hat x_1-x_1|=|y(t_2,\xi_1)-y(t_1,\xi_1)|
	\leq\left\|(UV)(t^*,\cdot)\right\|_{L^\infty}|t_1-t_2|
	\leq \left(E_{u_0}E_{v_0}\right)^{1/2}|t_1-t_2|,
\end{equation*}
we arrive at \eqref{uH} for 
$|u(t_1,x_1)-u(t_2,x_2)|$ 
from \eqref{udiff1} and \eqref{udiff2}.
The proof for 
$|v(t_1,x_1)-v(t_2,x_2)|$ follows along 
similar lines.

\medskip

\textbf{Proof of item (4)}.
Let us prove item (1) in Definition \ref{defsc}. 
Equation \eqref{gcn} and the definitions 
\eqref{DN} of $D_W(t)$ and $D_Z(t)$ imply that
\begin{equation*}
	\begin{split}
		G_0&=\int_{\mathbb{R}\setminus(D_W(t)\cup D_Z(t))}\left(
		qUV\cos^2\frac{W}{2}\cos^2\frac{Z}{2}
		+\frac{q}{4}\sin W\sin Z
		\right)(t,\xi)\,d\xi\\
		&=\int_{-\infty}^{\infty}
		\left(uv+(\px u)\px v\right)(t,x)\,dx.
	\end{split}
\end{equation*}
In the last equality, we applied the change of variables 
$x = y(t, \xi)$ and utilized 
\eqref{pxi-y}, \eqref{uvdef}, and and \eqref{pxuv}.

For any $t\in\mathbb{R}\setminus N_W$,
$\mathrm{meas}\left(D_W(t)\right)=0$.
Therefore, \eqref{cn} and \eqref{pxi-y} imply 
(see \eqref{estH1})
\begin{equation*}
	\begin{split}
		E_{u_0}&=
		\int_{\mathbb{R}\setminus D_W(t)}
		\left(U^2\cos^2\frac{W}{2}+
		\sin^2\frac{W}{2}\right)(t,\xi)
		\left(q\cos^2\frac{Z}{2}\right)(t,\xi)\,d\xi
		=\|u(t,\cdot)\|_{H^1}^2,
	\end{split}
\end{equation*}
for any $t\in\mathbb{R}\setminus N_W$.
Similarly, one can verify item (4) in 
Definition \ref{defsc} for 
$\|v(t,\cdot)\|_{H^1}^2$.

The validity of \eqref{hc} for 
$t\in\mathbb{R}\setminus (N_W\cup N_Z)$
follows from \eqref{hcn}, \eqref{pxi-y} and the fact that
$\mathrm{meas}\left(D_W(t)\right)
+\mathrm{meas}\left(D_Z(t)\right)=0$ for such  times $t$.
\end{proof}

\subsection{Appendix C}\label{AE}
\begin{proposition}\label{Sm}
The metric space $\left(\Sigma, d_\Sigma\right)$, 
defined by \eqref{Sig} and \eqref{d^2}, is complete.
\end{proposition}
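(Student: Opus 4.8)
The plan is to take an arbitrary Cauchy sequence in $(\Sigma,d_\Sigma)$ and construct its limit inside $\Sigma$. Let $\{(f_n,g_n)\}$ be Cauchy with respect to $d_\Sigma$. By the very form of the metric in \eqref{d^2}, this single Cauchy condition splits into three independent ones: $\{f_n\}$ is Cauchy in $H^1(\R)$, $\{g_n\}$ is Cauchy in $H^1(\R)$, and the product sequence $\{(\px f_n)\px g_n\}$ is Cauchy in $L^2(\R)$. Invoking completeness of $H^1(\R)$ and of $L^2(\R)$, I would extract limits $f,g\in H^1(\R)$ with $f_n\to f$ and $g_n\to g$ in $H^1$, together with some $h\in L^2(\R)$ with $(\px f_n)\px g_n\to h$ in $L^2$.

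The crux of the argument is to identify the $L^2$-limit $h$ as the product $(\px f)\px g$. This is delicate because strong $L^2$ convergence of the two factors $\px f_n\to\px f$ and $\px g_n\to\px g$ does not by itself force $L^2$ (or even $L^1$) convergence of the products, and the pointwise product of two $L^2$ functions need not lie in $L^2$. I would resolve this by an almost-everywhere subsequence argument: since $\px f_n\to\px f$ and $\px g_n\to\px g$ in $L^2$, pass to a subsequence along which both derivative sequences converge a.e., so that $(\px f_n)\px g_n\to(\px f)\px g$ a.e.\ along it; then thin out once more so that $(\px f_n)\px g_n\to h$ a.e.\ as well (using that $L^2$ convergence yields a.e.\ convergence of a subsequence). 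Comparing the two pointwise limits along this common subsequence gives $h=(\px f)\px g$ almost everywhere.

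Once $h$ is identified, membership $(f,g)\in\Sigma$ is immediate, since $\int_{-\infty}^{\infty}(\px f)^2(\px g)^2\,dx=\norm{h}_{L^2}^2<\infty$. Convergence in the metric then follows term by term: the two $H^1$ contributions to $d_\Sigma^2((f_n,g_n),(f,g))$ tend to $0$ by construction, while the coupling term equals $\norm{(\px f_n)\px g_n-(\px f)\px g}_{L^2}=\norm{(\px f_n)\px g_n-h}_{L^2}\to 0$ by the chosen $L^2$-limit. Hence $d_\Sigma((f_n,g_n),(f,g))\to 0$, establishing completeness. The main obstacle is exactly the identification of the limit of the products with the product of the limits; the a.e.-subsequence argument sidesteps the lack of continuity of multiplication on $L^2\times L^2$, and the remaining steps are routine applications of completeness of $H^1$ and $L^2$.
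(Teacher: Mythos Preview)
Your proposal is correct and follows essentially the same approach as the paper: extract limits in $H^1$, $H^1$, and $L^2$ by completeness, then identify the $L^2$ limit of the products with the product of the $H^1$ limits via an a.e.\ convergent subsequence. The only difference is cosmetic---you spell out the final verification of $d_\Sigma$-convergence a bit more explicitly than the paper does.
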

\begin{proof}
Consider a Cauchy sequence 
$\{(f_n,g_n)\}_{n=1}^{\infty}$ in the metric space 
$\left(\Sigma, d_\Sigma\right)$.
By the definition \eqref{d^2} of $d_\Sigma$ and the 
fact that $H^1(\R)$ and $L^2(\R)$ are complete Hilbert spaces, 
we conclude that there exist $f_0,g_0\in H^1(\R)$ 
and $h_0\in L^2(\R)$ such that
\begin{equation}\label{Lconv}
	\|f_n-f_0\|_{H^1}\to0,\quad
	\|g_n-g_0\|_{H^1}\to0,
	\,\mbox{ and }\,\,
	\|(\px f_n)\px g_n-h_0\|_{L^2}\to0,\quad
	n\to\infty.
\end{equation}
Thus it remains to prove that 
$h_0=(\px f_0)\px g_0$.

The limits \eqref{Lconv} imply 
that there exist subsequences 
$\{f_{n_k}\}_{k=1}^\infty$ and 
$\{g_{n_k}\}_{k=1}^\infty$ such that
\begin{equation*}
	\px f_{n_k}(x)\to\px f_0(x),\quad
	\px g_{n_k}(x)\to \px g_0(x)
	\,\mbox{ and }\,\,
	\left(\px f_{n_k}(x)\right)
	\px g_{n_k}(x)\to h_0(x),\quad
	k\to\infty,
\end{equation*}
for a.e. $x\in\R$. 
Thus, $h_0=(\px f_0)\px g_0$.
\end{proof}

\begin{lemma}\label{lA2}
Suppose that $f,g\in L^1(\mathbb{R})$ 
and there exists a sequence 
$\{g_n\}_{n\in\mathbb{N}}\subset 
L^p(\mathbb{R})$, $1\leq p<\infty$
such that $\|g_n-g\|_{L^p(\mathbb{R})}\to 0.$
Moreover, assume that $|g_n(x)|,|g(x)|\leq M$ for all 
$n\in\mathbb{N}$ and $x\in\mathbb{R}$. 
Then we have
\begin{equation}\label{limg}
	\|(g_n-g)^pf\|_{L^1(\mathbb{R})}\to 0,
\quad n\to\infty.
\end{equation}
\end{lemma}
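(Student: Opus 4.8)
The plan is to establish \eqref{limg} by combining the uniform bound $|g_n|,|g|\le M$ with the absolute continuity of the integral of $f\in L^1$, converting the given $L^p$ convergence into convergence of the weighted integrals via the standard subsequence principle. Throughout I read $(g_n-g)^p$ as $|g_n-g|^p$ (the natural interpretation for non-integer $p$), so that the quantity to control is
\begin{equation*}
a_n:=\|(g_n-g)^p f\|_{L^1}=\int_{-\infty}^{\infty}|g_n(x)-g(x)|^p\,|f(x)|\,dx,
\end{equation*}
and the goal is to show $a_n\to0$ as $n\to\infty$.

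First I would record the pointwise domination. Since $|g_n(x)|,|g(x)|\le M$ for all $n$ and a.e.\ $x$, one has $|g_n-g|^p\le(2M)^p$, and hence
\begin{equation*}
|g_n(x)-g(x)|^p\,|f(x)|\le (2M)^p\,|f(x)|,
\end{equation*}
where the right-hand side is integrable because $f\in L^1$. This supplies a single dominating function, independent of $n$, which is the key to invoking a convergence theorem.

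The core step is a subsequence argument. To prove $a_n\to0$ it suffices to show that every subsequence of $(a_n)$ admits a further subsequence converging to $0$. So I would fix an arbitrary subsequence $(a_{n_k})$. Because $\|g_{n_k}-g\|_{L^p}\to0$, convergence in $L^p$ yields a further subsequence $(g_{n_{k_j}})$ with $g_{n_{k_j}}(x)\to g(x)$ for a.e.\ $x$, whence $|g_{n_{k_j}}-g|^p|f|\to0$ a.e. Applying the dominated convergence theorem with the dominating function $(2M)^p|f|\in L^1$ then gives $a_{n_{k_j}}\to0$. By the subsequence characterization of limits, this forces $a_n\to0$, which is exactly \eqref{limg}.

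The only delicate point—and the reason one cannot simply apply dominated convergence to the full sequence—is that $L^p$ convergence does not itself entail pointwise convergence; this is precisely what the passage through a.e.-convergent subsequences, together with the subsequence principle, circumvents. As an alternative that avoids subsequences entirely, I could argue directly: splitting the integral over $\{|g_n-g|\le\delta\}$ and its complement, the first piece is bounded by $\delta^p\|f\|_{L^1}$, while on the complement Chebyshev's inequality gives $\mathrm{meas}\{|g_n-g|>\delta\}\le\delta^{-p}\|g_n-g\|_{L^p}^p\to0$, so the absolute continuity of $\int|f|$ forces that piece to vanish as $n\to\infty$; letting $\delta\to0$ then yields \eqref{limg}.
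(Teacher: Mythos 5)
Your proof is correct, but it takes a genuinely different route from the paper. The paper argues directly with a three-way $\varepsilon$-decomposition: it first cuts off the tails $|x|>R$ using $f\in L^1$ and the uniform bound $|g_n-g|^p\le(2M)^p$, then invokes Lusin's theorem to produce a compact set $\mathcal{D}\subset[-R,R]$ on which $f$ is continuous (hence bounded by some $M_\varepsilon$), so that the integral over $\mathcal{D}$ is controlled by $M_\varepsilon\|g_n-g\|_{L^p}^p\to0$, while the integral over the small exceptional set $[-R,R]\setminus\mathcal{D}$ is again handled by the uniform bound. Your main argument instead uses the subsequence principle together with the Riesz extraction of an a.e.-convergent subsequence from $L^p$ convergence and the dominated convergence theorem with dominating function $(2M)^p|f|$; your alternative argument via Chebyshev's inequality and absolute continuity of $\int|f|$ is closest in spirit to the paper's but replaces Lusin's theorem with the level-set split $\{|g_n-g|\le\delta\}$ versus its complement. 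All three arguments are valid; yours avoid Lusin's theorem entirely and, in the case of the Chebyshev variant, give an explicit quantitative bound $\limsup_n a_n\le\delta^p\|f\|_{L^1}$ for every $\delta>0$, whereas the paper's version keeps the estimate tied to $\|g_n-g\|_{L^p}^p$ on the good set, which is the form it reuses in Step 3 of Proposition \ref{Pdsm}. Your reading of $(g_n-g)^p$ as $|g_n-g|^p$ is the intended one (the lemma is applied with $p=2$).
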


\begin{proof}
Fix arbitrary $\ve>0$.
Since $f\in L^1$ and $|g_n|,|g|\leq M$, 
there exists $R=R(\ve)$ such that
$$
\left(
\int_{-\infty}^{-R}+\int_{R}^{\infty}
\right)
|(g_n-g)^p(x)f(x)|\,dx<\ve,\quad \mbox{for all }
n\in\mathbb{N}.
$$
By Lusin's theorem there exists compact set
$\mathcal{D}\subset[-R,R]$ such that $f$ 
is continuous on $\mathcal{D}$ and
the measure of $[-R,R]\setminus\mathcal{D}$ is $<\ve$.
Denoting $M_\ve=\max\limits_{x\in\mathcal{D}}|f(x)|$, 
we obtain
\begin{equation*}
	\|(g_n-g)^pf\|_{L^1}\leq
	\ve+M_\ve\|g_n-g\|_{L^p}^p
	+\ve (2M)^p\|f\|_{L^1}.
\end{equation*}
Since $\|g_n-g\|_{L^p}\to 0$ as $n\to\infty$, 
we conclude that $\|(g_n-g)^pf\|_{L^1}$ is 
arbitrarily small for large $n$, 
which implies \eqref{limg}.
\end{proof}

\end{document}